\author{Jérémy Bensadon}
\title{Black-box optimization using geodesics in statistical manifolds}
\date{}
\newcommand{\R}{\mathbb{R}}
\newcommand{\Rd}{\mathbb{R}^{d}}
\newcommand{\C}{\mathbb{C}}
\newcommand{\N}{\mathbb{N}}
\newcommand{\G}{\mathbb{G}}
\newtheorem{theo}{Theorem}
\newtheorem{prop}[theo]{Proposition}
\newtheorem{defi}[theo]{Definition}
\newtheorem{nota}[theo]{Notation}
\newtheorem{cor}[theo]{Corollary}
\DeclareMathOperator{\tr}{tr}
\DeclareMathOperator{\rk}{rk}
\newcommand{\trt}{{}^t \! }
\newcommand{\Gd}{\mathbb{G}_{d}}
\newcommand{\Rn}{\mathbb{R}^{n}}
\newcommand{\tGd}{\tilde{\mathbb{G}}_{d}}
\newcommand{\dt}{\delta \hspace{-0.07mm} t}
\def\d{{\operatorname{d}}}
\renewcommand{\geq}{\geqslant}
\renewcommand{\leq}{\leqslant}
\renewcommand{\Re}{\mathrm{Re}}
\renewcommand{\Im}{\mathrm{Im}}
\begin{document}


\maketitle

\begin{abstract}

\emph{Information geometric optimization} (IGO, \cite{IGO}) is a general framework for stochastic optimization problems aiming at limiting the influence of arbitrary parametrization choices. The initial problem is transformed into the optimization of a smooth function on a Riemannian manifold (following \cite{Ama}), 
defining a parametrization-invariant first order differential equation. However, in practice, it is necessary to discretize time, and then, parametrization invariance holds only at first order in the step size.

We define the \emph{Geodesic IGO} update (GIGO), which uses the Riemannian manifold structure to obtain an update entirely independent from the parametrization of the manifold. We test it with classical objective functions.

Thanks to Noether's theorem from classical mechanics, we find an efficient way to write a first order differential equation satisfied by the geodesics of the statistical manifold of Gaussian distributions, and thus to compute the corresponding GIGO update. We then compare GIGO, pure rank-$\mu$ CMA-ES \cite{CMATuto} and xNES \cite{xNES} (two previous algorithms that can be recovered by the IGO framework), and show that while the GIGO and xNES updates coincide when the mean is fixed, they are different in general, contrary to previous intuition. We then define a new algorithm (Blockwise GIGO) that recovers the xNES update from abstract principles. 

\end{abstract}
\clearpage
\tableofcontents

\section*{Introduction}
\addcontentsline{toc}{section}{Introduction}

Consider an objective function $ f \colon X \to \R$ to be minimized. We suppose we have absolutely no knowledge about $f$: the only thing we can do is ask for its value at any point $x \in X$ (\textit{black-box} optimization), and that the evaluation of $f$ is a costly operation. We are going to study algorithms that can be described in the IGO framework (see \cite{IGO}), 

A possible way to optimize such a function is the following:

We choose $(P_{\theta} )_{\theta \in \Theta }$ a family of probability distributions\footnote{This is not the definition we want formally for $\Theta$: the theoretical context is the context of differentiable manifolds. The importance of working in a manifold appears when changing the parametrization or when using a local parametrization. See \cite{Ama}.} on $X$, and an initial probability distribution $P_{\theta^{0}}$. Now, we replace $f$ by $F \colon \Theta \to \R$ (for example $F(\theta ) = E_{x \sim P_{\theta }} [f(x)]$), and we optimize $F$ with a gradient descent:

\begin{equation} \frac{\d{\theta^{t}}}{\d{t}} = - \nabla_{\theta}  E_{x \sim P_{\theta }} [f(x)]. \label{Flot0} \end{equation}

However, because of the gradient, this equation depends entirely on the parametrization we chose for $\Theta$, which is disturbing: we do not want to have two different updates because we chose different numbers to represent the objects we are working with. That is why invariance is a \emph{design principle} behind IGO. More precisely, we want invariance with respect to monotone transformations of $f$, and invariance under reparametrization of $\theta$.

IGO provides a differential equation on $\theta$ with the desired properties, but because of the discretization of time needed to obtain an explicit algorithm, we lose invariance under reparametrization of $\theta$: two IGO algorithms for the same problem, but with different parametrizations, coincide only at first order in the step size. A possible solution to this problem is Geodesic IGO (GIGO), introduced here\footnote{See also IGO-ML, in \cite{IGO}, for example.}: the initial direction of the update at each step of the algorithm remains the same as in IGO, but instead of moving straight for the chosen parametrization, we use the structure of Riemannian manifold of our family of probability distributions (see \cite{Ama}) by following its geodesics.

Finding the geodesics of a Riemannian manifold is not always easy, but Noether's theorem will allow us to obtain quantities that are preserved along the geodesics. In the case of Gaussian distributions, it is possible to find enough invariants to obtain a first order differential equation satisfied by the geodesics, which makes their computation easier. \\

Although the geodesic IGO algorithm is not strictly speaking parametrization invariant when no closed form for the geodesics is known, it is possible to compute them at arbitrary precision without increasing the numbers of objective function calls.\\

The first two sections are preliminaries:
in Section \ref{SecIGO}, we recall the IGO algorithm, introduced in \cite{IGO}, and in Section \ref{SecRie}, after a reminder about Riemannian geometry, we state Noether's theorem, which will be our main tool to compute the GIGO update for Gaussian distributions.

In Section \ref{SecGIGOt}, we consider Gaussian distributions with covariance matrix proportional to the identity matrix: this space is isometric to the hyperbolic space, and the geodesics of the latter are known.

In Section \ref{SecGIGO}, we consider the general Gaussian case, and we use Noether's theorem to obtain two different sets of equations to compute the GIGO update. The equations were already known, see \cite{Eriksen}, \cite{SolGeo} and \cite{GeoRemarks}, but the connection with Noether's theorem has not been mentioned. We then give the explicit solution for these equations, from \cite{SolGeo}.

In Section \ref{SecComp}, we recall quickly the xNES and CMA updates and we introduce a slight modification of the IGO algorithm to incorporate the direction-dependent learning rates used in CMA-ES and xNES. We then compare these different algorithms, prove that xNES is not GIGO in general and we finally introduce a new family of algorithms extending GIGO and recovering xNES from abstract principles.

Finally, Section \ref{SecNum} presents numerical experiments, which suggest that when using GIGO with Gaussian distributions the step size must be chosen carefully.\\

To simplify notation, we will use $\Rn$ instead of a general manifold as much as possible, but the latter is the ``right" formal context.\\

\textbf{Acknowledgement.} I would like to thank Yann Ollivier for his numerous remarks about this article, and Frédéric Barbaresco for finding the reference \cite{SolGeo}.

%
%
%

\section{Definitions: IGO, GIGO}
\label{SecIGO}

In this section, we recall what the IGO framework is, and we define the geodesic IGO update. Consider again Equation \ref{Flot0}: 
$$ \frac{\d{\theta^{t}}}{\d{t}} = - \nabla_{\theta}  E_{x \sim P_{\theta }} [f(x)].$$
Its main problems are that
\begin{itemize}

\item The gradient depends on the parametrization of our space of probability distributions (see \ref{SubSecGIGO} for an example).

\item The equation is not invariant under monotone transformations of $f$. For example, the optimization for $10f$ moves ten times faster than the optimization for $f$. 
\end{itemize}

In this section, we recall how IGO deals with this (see \cite{IGO} for a better presentation).

\subsection{Invariance under reparametrization of $\theta $: Fisher metric}

In order to achieve invariance under reparametrization of $\theta $, it is possible to turn our family of probability distributions into a Riemannian manifold (it is the main topic of \textit{information geometry}, see \cite{Ama}), and therefore, there is a canonical gradient (called the \textit{natural gradient}). 

\begin{defi}
Let $P,Q$ be two probability distributions on $X$. The Kullback--Leibler divergence of $Q$ from $P$ is defined by:
\begin{equation} \mathrm{KL} (Q \Vert P  ) = \int_{X} \ln ( \frac{Q (x)}{P (x) } ) \d{Q }(x)  .\end{equation}
\end{defi}

By definition, it does not depend on the parametrization. It is not symmetrical, but if for all $x$, the application $ \theta \mapsto P_{\theta} (x)$ is $C^{2}$, then a second-order expansion yields:

\begin{equation} \mathrm{KL} ( P_{\theta + \d{\theta} } \Vert P_{\theta } ) = \frac{1}{2} \sum_{i,j} I_{ij}(\theta) \d{\theta_{i}} \d{\theta_{j}} + o(\d{\theta ^{2}}), \end{equation}
where
\begin{equation} I_{ij}(\theta) = \int_{X} \dep{\ln P_{\theta}(x)}{\theta_{i}}\dep{\ln P_{\theta}(x)}{\theta_{j}} \d{P_{\theta} ( x)}  =-\int_{X} \frac{\partial^{2} \ln P_{\theta}(x)}{\partial \theta_{i} \partial \theta_{j}} \d{P_{\theta} (x) }. 
\end{equation}
This is enough to endow the family $(P_{\theta})_{\theta \in \Theta}$ with a Riemannian manifold structure\footnote{A Riemannian manifold $M$ is a differentiable manifold (which can be seen as pieces of $\Rn$ glued together), with a \emph{metric}. The metric at $x$ is a symmetric positive definite quadratic form on the tagent space of $M$ at $x$: it indicates how expensive it is to move in a given direction on the manifold. We will think of the updates of the algorithms we will be studying as paths on $M$.}.
The matrix $I(\theta)$ is called ``Fisher information matrix", the metric it defines is called the ``Fisher metric".

Given a metric, it is possible to define a gradient attached to this metric: the key property of the gradient is that for any smooth function $f$
\begin{equation}\label{GradProp}
f(x+h)=f(x) + \sum_{i} h_{i} \dep{f}{x_{i}}+o(\Vert h \Vert ) =f(x)+\langle h, \nabla f (x) \rangle +o(\Vert h \Vert ),
\end{equation}
where $\langle x ,y \rangle = \trsp{x} I y$ is the dot product in metric $I$. Therefore, in order to keep the property of Equation \ref{GradProp}, we must have $\nabla f=I^{-1} \dep{ f}{x}$.

We have therefore the following gradient (called ``natural gradient", see \cite{Ama}): \begin{equation} \tilde{\nabla}_{\theta} = I^{-1} (\theta ) \dep{}{\theta}, \end{equation}
and since the Kullback--Leibler divergence does not depend on the parametrization, neither does the natural gradient.

Later in this paper, we will study families of Gaussian distributions. The following proposition gives the Fisher metric for these families.

\begin{prop}
\label{FishG}
Let $(P_{\theta} )_{\theta \in \Theta}$ be a family of normal probability distributions: $P_{\theta } = \mathcal{N} (\mu (\theta ), \Sigma (\theta ))$. If $\mu$ and $\Sigma$ are $C^{1}$, the Fisher metric is given by:
\begin{equation} I_{i,j} (\theta )= \dep{\trsp{ \mu } }{\theta_{i}} \Sigma^{-1} \dep{\mu }{\theta_{j}} + \frac{1}{2} \tr \left( \Sigma^{-1} \dep{ \Sigma}{\theta_{i}} \Sigma^{-1}  \dep{\Sigma}{\theta_{j}} \right). \label{FisherG} \end{equation}
\end{prop}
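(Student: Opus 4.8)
The plan is to compute $I_{ij}(\theta)$ directly from the first definition, $I_{ij}(\theta) = \int_X \dep{\ln P_\theta(x)}{\theta_i}\dep{\ln P_\theta(x)}{\theta_j}\,\d{P_\theta(x)}$, by plugging in the explicit Gaussian log-density and differentiating. First I would write, for $P_\theta = \mathcal{N}(\mu,\Sigma)$ on $\R^n$,
\begin{equation}
\ln P_\theta(x) = -\tfrac{1}{2}\trsp{(x-\mu)}\Sigma^{-1}(x-\mu) - \tfrac{1}{2}\ln\det\Sigma - \tfrac{n}{2}\ln(2\pi),
\end{equation}
and then differentiate with respect to a coordinate $\theta_i$, on which both $\mu$ and $\Sigma$ depend. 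The main tools are the standard matrix-calculus identities $\dep{\Sigma^{-1}}{\theta_i} = -\Sigma^{-1}\dep{\Sigma}{\theta_i}\Sigma^{-1}$ and $\dep{\ln\det\Sigma}{\theta_i} = \tr(\Sigma^{-1}\dep{\Sigma}{\theta_i})$. Applying these, the derivative of the log-density splits into a term linear in $(x-\mu)$ coming from the mean and a term quadratic in $(x-\mu)$ coming from the covariance.

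The key step is then to form the product $\dep{\ln P_\theta}{\theta_i}\dep{\ln P_\theta}{\theta_j}$ and take its expectation under $x\sim\mathcal{N}(\mu,\Sigma)$. This produces three types of terms: mean$\times$mean, mean$\times$covariance (cross terms), and covariance$\times$covariance. The cross terms vanish because they are expectations of odd-degree polynomials in $(x-\mu)$ against a centered Gaussian. The mean$\times$mean term reduces, using $E[(x-\mu)\trsp{(x-\mu)}]=\Sigma$, to $\dep{\trsp{\mu}}{\theta_i}\Sigma^{-1}\dep{\mu}{\theta_j}$, giving the first summand. The covariance$\times$covariance term is a quadratic form in $(x-\mu)$; evaluating its expectation requires the fourth-moment (Isserlis/Wick) identity for Gaussians, which after simplification with the trace identities yields $\tfrac{1}{2}\tr(\Sigma^{-1}\dep{\Sigma}{\theta_i}\Sigma^{-1}\dep{\Sigma}{\theta_j})$, the second summand.

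I expect the main obstacle to be the covariance$\times$covariance term: it involves the expectation of a product of two quadratic forms $\trsp{y}Ay\cdot\trsp{y}By$ with $y=x-\mu$, and correctly reducing this via the fourth-moment formula $E[y_ay_by_cy_d] = \Sigma_{ab}\Sigma_{cd}+\Sigma_{ac}\Sigma_{bd}+\Sigma_{ad}\Sigma_{bc}$ and then cancelling the constant $-\tfrac{1}{2}\tr(\cdot)$ pieces against the $\ln\det\Sigma$ derivatives is where sign and factor errors most easily creep in. An alternative, possibly cleaner, route for this block is to use the second form of the definition, $I_{ij}=-\int_X \frac{\partial^2\ln P_\theta(x)}{\partial\theta_i\partial\theta_j}\,\d{P_\theta(x)}$, where the second derivative is a lower-degree polynomial in $(x-\mu)$, so only second moments $E[y\trsp{y}]=\Sigma$ are needed and the Wick combinatorics is avoided; I would fall back to this form if the direct fourth-moment computation becomes unwieldy.
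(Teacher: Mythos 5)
Your plan is correct, and it is worth noting that the paper itself does not prove this proposition: it simply states that the computation is non-trivial and defers to a reference. Your outline therefore supplies the argument the paper omits, and it is the standard one. The only step that deserves care is the covariance$\times$covariance block, and your treatment of it is sound: writing $y=x-\mu$ and $B_{i}=\Sigma^{-1}\dep{\Sigma}{\theta_{i}}\Sigma^{-1}$, the relevant term is
\begin{equation}
\tfrac{1}{4}\,\mathrm{Cov}\left(\trsp{y}B_{i}y,\ \trsp{y}B_{j}y\right)=\tfrac{1}{4}\cdot 2\tr\left(B_{i}\Sigma B_{j}\Sigma\right)=\tfrac{1}{2}\tr\left(\Sigma^{-1}\dep{\Sigma}{\theta_{i}}\Sigma^{-1}\dep{\Sigma}{\theta_{j}}\right),
\end{equation}
where the constant $-\tfrac{1}{2}\tr(\Sigma^{-1}\dep{\Sigma}{\theta_{i}})$ coming from $\ln\det\Sigma$ is exactly $-\tfrac{1}{2}E[\trsp{y}B_{i}y]$, which is why the product of expectations is subtracted and the Wick formula leaves only the single trace term. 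Your fallback via $I_{ij}=-E[\partial_{i}\partial_{j}\ln P_{\theta}]$ is indeed the cleaner route, since it only requires second moments; either version completes the proof.
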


\textit{Proof.}
This is a non-trivial calculation. See \cite{RefFisher} for more details.

As we will often be working with Gaussian distributions, we introduce the following notation:

\begin{nota}
$\Gd  $ is the manifold of Gaussian distributions in dimension $d$, equipped with the Fisher metric.

$\tGd  $ is the manifold of Gaussian distributions in dimension $d$, with covariance matrix proportional to identity in the canonical basis of $\Rd$, equipped with the Fisher metric.
\end{nota}

\subsection{ IGO flow, IGO algorithm}

In IGO \cite{IGO}, invariance with respect to monotone transformations is achieved by replacing $f$ by the following transform: we set 
\begin{equation} q (x) = P_{x' \sim P_{\theta} } (f(x') \leq f(x) ),\end{equation}
a non-increasing function $w \colon [0;1] \to \R$ is chosen (the \textit{selection scheme}), and finally $W^{f}_{\theta} (x) =w(q (x ) ). \footnote{This definition has to be slightly changed if the probability of a tie is not zero. See \cite{IGO} for more details. } $ By performing a gradient descent on $E_{x \sim P_{\theta }} [W_{f}^{\theta^{t}} (x)]$, we obtain the \emph{``IGO flow"}:

\begin{equation} \frac{\d{\theta^{t}}}{\d{t}} = \tilde{\nabla}_{\theta} \int_{X} W^{f}_{\theta^{t}} (x)P_{\theta} ( \d{x} ) = \int_{X} W^{f}_{\theta^{t}} (x)\tilde{\nabla}_{\theta} \ln P_{\theta} (x) P_{\theta^{t} }( \d{x} ). \label{FlotIGO} \end{equation}

For practical implementation, the integral in \eqref{FlotIGO} has to be approximated. For the integral itself, the Monte-Carlo method is used: $N$ values  $(x_{1},...,x_{N} )$ are sampled from the distribution $P_{\theta^{t}}$, and the integral becomes
\begin{equation} \frac{1}{N}\sum_{i=1}^{N} W^{f}_{\theta^{t}} (x_{i}) \tilde{\nabla}_{\theta} \ln P_{\theta} (x_{i}) \end{equation} and we approximate $\frac{1}{N}W_{\theta}^{f} (x_{i} ) = \frac{1}{N}w(q (x_{i} ) )$ by $\hat{w}_{i} = \frac{1}{N}w (\frac{\rk (x_{i} ) + 1/2}{N} ) $, where  $\rk (x_{i} )= \vert \{ j, f(x_{j}) < f(x_{i}) \} \vert$.\footnote{ It can be proved (see \cite{IGO}) that $ \lim_{N \rightarrow \infty} N\hat{w}_{i} = W_{f}^{\theta^{t}} (x_{i}) $. Here again, we are assuming there are no ties.}

We now have an algorithm that can be used in practice if the Fisher information matrix is known.

\begin{defi}
\label{IGO}
The \emph{IGO update} associated with parametrisation $\theta$, sample size $N$, step size $\dt$ and selection scheme $w$ 
\footnote{One could start directly with the $\hat{w}_{i}$ rather than $w$, as we will do later.}
 is given by the following update rule:

\begin{equation} \theta^{t+\dt} = \theta^{t} + \dt I^{-1} (\theta^{t} ) \sum_{i=1}^{N} \hat{w}_{i} \dep{\ln P_{\theta } (x_{i} )}{\theta} .\end{equation}

We call \emph{IGO speed} the vector $ I^{-1} (\theta^{t} ) \sum_{i=1}^{N} \hat{w}_{i} \dep{\ln P_{\theta } (x_{i} )}{\theta}$.
\end{defi}

\subsection{Geodesic IGO}
\label{SubSecGIGO}
Although the IGO flow associated with a family of probability distributions is intrinsic (it only depends on the family itself, not the parametrization we choose for it), the IGO update is not. However, two different IGO updates are ``close" when $\dt \rightarrow 0$: the difference between two steps of IGO that differ only by the parametrization is a $O(\dt^{2})$.  
%
%
%

Intuitively, the reason for this difference is that two IGO algorithms start at the same point, and follow ``straight lines" with the same initial speed, but the definition of ``straight lines" changes with the parametrization.

For instance, in the case of Gaussian distributions, let us consider two different IGO updates with Gaussian distributions in dimension $1$, the first one with parametrization $(\mu , \sigma )$, and the second one with parametrization $(\mu , c:=\sigma^{2} )$. We suppose the IGO speed for the first algorithm is $(\dot{\mu} , \dot{ \sigma} )$. The corresponding IGO speed in the second parametrization is given by the identity $\dot{c}=2\sigma \dot{\sigma}$. Therefore, the first algorithm gives the standard deviation $\sigma_{\mathrm{new, 1}} = \sigma_{\mathrm{old}}+ \dt \dot{\sigma}$, and the variance $c_{\mathrm{new, 1}} =(\sigma_{\mathrm{new, 1}})^{2} = c_{\mathrm{old}} + 2 \dt  \sigma_{\mathrm{old}}  \dot{\sigma} + \dt^{2} \dot{\sigma}^{2} =c_{\mathrm{new, 2}} + \dt^{2} \dot{\sigma}^{2} $.

The geodesics of a Riemannian manifold are the generalization of the notion of straight line: they are curves that locally minimize length\footnote{In particular, given two points $a$ and $b$ on the Riemannian manifold $M$, the shortest path from $a$ to $b$ is always a geodesic. The converse is not true, though.}. The notion will be explained precisely in Section \ref{SecRie}, but let us define the geodesic IGO algorithm, which follows the geodesics of the manifold instead of following the straight lines for an arbitrary parametrization.

\begin{defi}[GIGO]
\label{DefGIGO}
The \emph{geodesic IGO} update (GIGO) associated with sample size $N$, step size $\dt$ and selection scheme $w$  is given by the following update rule:

\begin{equation} \theta^{t+\dt} = \exp_{\theta^{t}} (Y\dt) \end{equation}
where
\begin{equation} 
\label{IGOSpeed}
Y= I^{-1} (\theta^{t} ) \sum_{i=1}^{N} \hat{w}_{i} \dep{\ln P_{\theta } (x_{i} )}{\theta}, \end{equation}
is the IGO speed and $\exp_{\theta^{t}}$ is the exponential of the Riemannian manifold $\Theta$. Namely, $\exp_{\theta^{t}} (Y\dt)$ is the endpoint of the geodesic of $\Theta$ starting at $\theta^{t}$, with initial speed $Y$, after a time $\dt$.
By definition, this update does not depend on the parametrization $\theta$.
\end{defi}

Notice that while the GIGO update is compatible with the IGO flow (in the sense that when $\dt \rightarrow 0$ and $N \rightarrow \infty$, a parameter $\theta^{t}$ updated according to the GIGO algorithm is a solution of Equation \ref{FlotIGO}, the equation defining the IGO flow), it not necessarily an IGO update. More precisely, the GIGO update is an IGO update if and only the geodesics of $\Theta$ are straight lines for some parametrization\footnote{By Beltrami's theorem, this is equivalent to $\Theta$ having constant curvature.}.

The main problem with this update is that in general, obtaining equations for the geodesics is a difficult problem. In the next section, we will state Noether's Theorem, which will be our main tool to compute the GIGO update for Gaussian distributions.

\section{Riemannian geometry, Noether's Theorem}
\label{SecRie}

\subsection{Riemannian geometry}
The goal of this section is to state Noether's theorem. We will not prove anything here, see \cite{Arn} for the proofs, \cite{Bou} or \cite{Jost} for a more detailed presentation. 
Noether's theorem states that if a system has symmetries, then, there are invariants attached to this symmetries. Firstly, we need some definitions.

\begin{defi}[Motion in a Lagrangian system]
Let $M$ be a differentiable manifold, $TM$ the set of tangent vectors on $M$\footnote{A tangent vector is identified by the point at which it is tangent, and a vector in the tangent space.}
, and $\deff{\mathcal{L}}{TM}{\R}{(q,v)}{\mathcal{L} (q,v)} $ a differentiable function (called the Lagrangian function\footnote{In general, it could depend on $t$.}). 
A ``motion in the lagrangian system $(M, \mathcal{L})$ from $x$ to $y$" is map $\gamma \colon [t_{0} , t_{1} ] \to M$ such that:
\begin{itemize} 

\item $\gamma ( t_{0} ) =x$
\item $\gamma (t_{1} )=y$ 
\item $\gamma $ is a local extremum of the functional:
\begin{equation} \Phi (\gamma ) = \int_{t_{0}}^{t_{1}} \mathcal{L}(\gamma (t), \dot{\gamma} (t) ) \d{t} ,\end{equation}
among all curves $c \colon [t_{0} , t_{1} ] \to M$ such that $c(t_{0} ) = x$, and $c(t_{1} ) = y$.
\end{itemize}

\end{defi}

For example, when $(M,g)$ is a Riemannian manifold, the length of a curve $\gamma $ between $\gamma ( t_{0} )$ and $\gamma ( t_{1} )$ is:
\begin{equation} \label{eqlength} \int_{t_{0}}^{t_{1}} \sqrt{(g(\dot{\gamma} (t),\dot{\gamma} (t)  ) )}\d{t}. \end{equation}

The curves that follow the shortest path between two points $x, y \in M$ are therefore the minima $\gamma $ of the functional ($\ref{eqlength}$) such that $\gamma (t_{0} ) = x$ and $\gamma (t_{1} ) =y$, and the corresponding Lagrangian function is $(q,v)\mapsto \sqrt{g(v,v)}$. The solution to the problem of finding a parametrized curve with a shortest length is not unique: any curve following the shortest trajectory will have minimum length. For example, if $\gamma_{1} : [a,b] \to M$ is a curve of shortest path, so is $\gamma_{2} : t\mapsto \gamma_{1} (t^{2} )$: these two curves define the same trajectory in $M$, but they do not travel along this trajectory at the same speed. This leads us to the following definition:

\begin{defi}[Geodesics]
Let $I$ be an interval of $ \R$, $(M,g)$ be a Riemannian manifold. A curve $\gamma: I \to M$ is called a \textit{geodesic} if for all $t_{0}, t_{1} \in I$, $\gamma \vert_{[t_{0}, t_{1}]}$ is a motion in the Lagrangian system $(M,\mathcal{L})$ from $\gamma (t_{0} )$ to $\gamma (t_{1} )$, where

\begin{equation} \mathcal{L}(\gamma )=\int_{t_{0}}^{t_{1}} (g(\dot{\gamma} (t),\dot{\gamma} (t)  ) )\d{t}. \end{equation}
\end{defi}

It can be shown (see \cite{Bou}) that geodesics are curves that \emph{locally} minimize length, with constant velocity\footnote{In the sense that $\frac{dg(\dot{\gamma} (t) ,(\dot{\gamma} (t)  )}{dt}=0$.}, which solves the previous uniqueness problem. More precisely, given a starting point and a starting speed, the geodesic is unique. This motivates the definition of the exponential of a Riemannian manifold.

\begin{defi}
Let $(M,g)$ be a Riemannian manifold. We call \textit{exponential} of $M$ the application:
$$\begin{array}{ccccc}
\exp &: & TM & \to & M \\
& & (x,v) & \mapsto & \exp_{x} (v), \\
\end{array}$$
such that for any $x\in M$, if $\gamma$ is the geodesic of $M$ satisfying $\gamma (0)=x$ and $\gamma ' (0)=v$, then $\exp_{x} (v)=\gamma (1)$.
\end{defi}

%
%
%
%
%

In order to find an extremal of a functional, the most commonly used result is called the ``Euler--Lagrange equations" (see \cite{Arn} for example). Using them, it is possible to show that the geodesics of a Riemannian manifold follow the ``geodesic equations":

\begin{equation} \label{geoEq}\ddot{x}^{k} + \Gamma_{ij}^{k} \dot{x}^{i} \dot{x}^{j} = 0, \end{equation}
where the \begin{equation}\Gamma_{ij}^{k} = \frac{1}{2} g^{lk} \left(\dep{g_{jl}}{q_{i}} + \dep{g_{li}}{q_{j}}  -\dep{g_{ij}}{q_{l}} \right)  \end{equation}
are called ``Christoffel symbols" of the metric $g$. However, these coefficients are tedious (and sometimes difficult) to compute, and (\ref{geoEq}) is a second order differential equation. Noether's theorem will give us a first order equation to compute the geodesics.


\subsection{Noether's Theorem}

\begin{defi}
\label{AdmMap}
Let $h \colon M \to M$ a diffeomorphism. We say that the Lagrangian system $(M,\mathcal{L})$ \emph{admits the symmetry $h$} if for any $(q,v)\in TM$, 
\begin{equation} \mathcal{L} \left( h(q),\d{h}(v) \right) = \mathcal{L}(q,v),\end{equation}
where $\d{h}$ is the differential of $h$.  

If $M$ is obvious, we will sometimes say that $\mathcal{L}$ is \emph{invariant under $h$}.
\end{defi}

An example will be given in the proof of Theorem \ref{INoe}.

We can now state Noether's theorem (see for example \cite{Arn}).
\begin{theo}[Noether's Theorem]
\label{NoeTh} 
If the Lagrangian system $(M,\mathcal{L} )$ admits the one-parameter group of symmetries $h^{s} \colon M \to M$, $s \in \R$, then the following quantity remains constant during the motions in the system $(M,\mathcal{L} )$. Namely, 
\begin{equation} I(\gamma(t), \dot{\gamma} (t) ) =  \dep{\mathcal{L}}{v} \left( \frac{\d{h}^{s} (\gamma (t))}{\d{s}} \vert_{s=0} \right) \end{equation}
does not depend on $t$ if $\gamma$ is a motion in $(M,\mathcal{L})$.
\end{theo}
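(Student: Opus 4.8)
The plan is to prove that $I(\gamma(t),\dot\gamma(t))$ is constant along any motion by combining two ingredients: the Euler--Lagrange equations, which characterize motions as extremals of $\Phi$, and the invariance hypothesis $\mathcal{L}(h^s(q),\d h^s(v))=\mathcal{L}(q,v)$. The core idea is that the invariance of $\mathcal{L}$ under the whole one-parameter family forces the derivative of $\mathcal{L}$ in the direction of the infinitesimal symmetry generator to vanish, and then the Euler--Lagrange equations let us rewrite that vanishing derivative as a total time derivative of $I$.

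First I would set up local coordinates $(q^1,\dots,q^n)$ on $M$ and write the conserved quantity explicitly as $I=\sum_k \dep{\mathcal{L}}{v^k}\, \Phi^k(q)$, where $\Phi^k(q)=\dep{}{s}\big(h^s(q)\big)^k\big\vert_{s=0}$ is the $k$-th component of the infinitesimal generator of the symmetry group. Next I would exploit the symmetry hypothesis: since $\gamma^s(t):=h^s(\gamma(t))$ satisfies $\mathcal{L}(\gamma^s,\dot\gamma^s)=\mathcal{L}(\gamma,\dot\gamma)$ for every $s$, differentiating with respect to $s$ at $s=0$ gives the identity
\begin{equation}
\sum_k \dep{\mathcal{L}}{q^k}\,\Phi^k + \sum_k \dep{\mathcal{L}}{v^k}\,\dot\Phi^k = 0,
\end{equation}
where $\dot\Phi^k$ denotes the time derivative of $\Phi^k(\gamma(t))$, obtained by commuting the $s$- and $t$-derivatives. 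The point is that this holds for the actual trajectory $\gamma$, not just formally.

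Then I would compute $\frac{\d I}{\d t}$ directly by the product rule:
\begin{equation}
\frac{\d I}{\d t} = \sum_k \frac{\d}{\d t}\!\left(\dep{\mathcal{L}}{v^k}\right)\Phi^k + \sum_k \dep{\mathcal{L}}{v^k}\,\dot\Phi^k.
\end{equation}
Here I invoke the Euler--Lagrange equations, $\frac{\d}{\d t}\dep{\mathcal{L}}{v^k}=\dep{\mathcal{L}}{q^k}$, which hold precisely because $\gamma$ is a motion (a local extremum of $\Phi$). Substituting this into the first sum turns $\frac{\d I}{\d t}$ into exactly the left-hand side of the symmetry identity above, which is zero. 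Hence $I$ is constant along the motion.

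The main obstacle is conceptual bookkeeping rather than hard analysis: one must carefully justify the interchange of the $\partial_s$ and $\partial_t$ derivatives (legitimate by smoothness of $h^s$ and $\gamma$, via Schwarz's theorem) and be scrupulous about where each hypothesis enters — the symmetry assumption supplies the algebraic identity, while the Euler--Lagrange equations supply the dynamics. If the excerpt has not stated the Euler--Lagrange equations as a standalone result I can cite, I would first recall them from the extremality of $\Phi$ (this is the content referenced via \cite{Arn}), since they are the one place where ``$\gamma$ is a motion'' is actually used.
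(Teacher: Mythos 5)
Your argument is correct: it is the classical proof of Noether's theorem (differentiate the invariance identity $\mathcal{L}(h^{s}(q),\d h^{s}(v))=\mathcal{L}(q,v)$ in $s$ at $s=0$ along the motion, then use the Euler--Lagrange equations to recognize the result as $\frac{\d}{\d t}$ of the claimed invariant). Note that the paper itself does not prove this statement --- it explicitly defers to \cite{Arn} --- and the proof given there is precisely the one you describe, so there is nothing to compare beyond observing that your write-up correctly identifies where each hypothesis (symmetry versus extremality) is used.
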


Now, we are going to apply this theorem to our problem: computing the geodesics of Riemannian manifolds of Gaussian distributions.

\section{GIGO in $\tGd $}
\label{SecGIGOt}

We do not know of a closed form for the geodesics of $\Gd$. However, the situation is better if we force the covariance matrix to be either diagonal or proportional to the identity matrix. In the former case, the manifold we are considering is $(\mathbb{G}_{1})^{d}$, and in the latter case, it is $\tGd$. The geodesics of $(\mathbb{G}_{1})^{d}$ are given by

\begin{prop}
Let $M$ be a Riemannian manifold, let $d\in \N$, let $\Phi$ be the Riemannian exponential of $M^{d}$, and let $\phi$ be the Riemannian exponential of $M$. We have:

\begin{equation} \Phi_{(x_{1},...,x_{n})}((v_{1},...,v_{n}))=(\phi_{x_{1}} (v_{1}),...,\phi_{x_{n}} (v_{n})) \end{equation}

In particular, knowing the geodesics of $\mathbb{G}_{1}$ is enough to compute the geodesics of $(\mathbb{G}_{1})^{d}$.
\end{prop}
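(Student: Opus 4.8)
The plan is to combine the variational characterization of geodesics recalled in Section~\ref{SecRie} with the observation that the Riemannian metric of a product manifold splits as a direct sum over the factors. The whole point is that both the energy functional and the admissible variations decouple across factors, so that a curve in $M^{d}$ is a geodesic exactly when each of its components is a geodesic of $M$.

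First I would write the product metric explicitly. Identifying the tangent space $T_{(x_{1},\dots,x_{d})}M^{d}$ with the direct sum $\bigoplus_{i=1}^{d}T_{x_{i}}M$, the natural (product) metric on $M^{d}$ is
\begin{equation} g\big((v_{1},\dots,v_{d}),(w_{1},\dots,w_{d})\big)=\sum_{i=1}^{d}g_{M}(v_{i},w_{i}), \end{equation}
where $g_{M}$ denotes the metric of $M$. For a curve $\gamma(t)=(\gamma_{1}(t),\dots,\gamma_{d}(t))$ in $M^{d}$, the geodesic Lagrangian $\mathcal{L}(q,v)=g(v,v)$ therefore evaluates as $\mathcal{L}(\gamma,\dot\gamma)=\sum_{i}g_{M}(\dot\gamma_{i},\dot\gamma_{i})$, and the action functional splits as
\begin{equation} \Phi(\gamma)=\int_{t_{0}}^{t_{1}}\mathcal{L}(\gamma,\dot\gamma)\,\d t=\sum_{i=1}^{d}\int_{t_{0}}^{t_{1}}g_{M}(\dot\gamma_{i},\dot\gamma_{i})\,\d t=\sum_{i=1}^{d}\Phi_{i}(\gamma_{i}), \end{equation}
each summand depending only on the single component curve $\gamma_{i}$.

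Second I would argue that criticality decouples. A variation of $\gamma$ with fixed endpoints in $M^{d}$ is exactly a $d$-tuple of independent variations of the $\gamma_{i}$ with fixed endpoints in $M$; since $\Phi_{i}$ depends only on $\gamma_{i}$, the first variation of $\Phi$ vanishes for all admissible variations if and only if the first variation of each $\Phi_{i}$ vanishes for all admissible variations of $\gamma_{i}$. Hence $\gamma$ is a motion of the Lagrangian system $(M^{d},\mathcal{L})$, i.e. a geodesic, if and only if every component $\gamma_{i}$ is a geodesic of $M$. Equivalently, one can verify this at the level of Equation~\ref{geoEq}: because the product metric is block diagonal with the $i$-th block depending only on the coordinates of the $i$-th factor, every Christoffel symbol mixing indices of distinct factors vanishes, so the geodesic equation on $M^{d}$ is the disjoint union of the $d$ geodesic equations on the factors.

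Finally I would conclude by uniqueness. Given an initial point $(x_{1},\dots,x_{d})$ and an initial velocity $(v_{1},\dots,v_{d})$, let $\gamma_{i}$ be the geodesic of $M$ with $\gamma_{i}(0)=x_{i}$ and $\dot\gamma_{i}(0)=v_{i}$, so that $\phi_{x_{i}}(v_{i})=\gamma_{i}(1)$. By the previous step the curve $\gamma=(\gamma_{1},\dots,\gamma_{d})$ is a geodesic of $M^{d}$ with the prescribed initial point and velocity, and since a geodesic is determined by its starting point and starting speed, it is \emph{the} geodesic defining $\Phi_{(x_{1},\dots,x_{d})}$. Evaluating at $t=1$ gives $\Phi_{(x_{1},\dots,x_{d})}((v_{1},\dots,v_{d}))=\gamma(1)=(\gamma_{1}(1),\dots,\gamma_{d}(1))=(\phi_{x_{1}}(v_{1}),\dots,\phi_{x_{d}}(v_{d}))$, which is the claim. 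The only genuinely delicate point is the decoupling of the variational problem in the second step: one must check that the product structure of the space of admissible variations is used correctly, everything else being either a direct consequence of the product metric or an appeal to the uniqueness of geodesics stated in Section~\ref{SecRie}.
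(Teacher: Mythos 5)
Your proof is correct and follows essentially the same route as the paper, which justifies the proposition in one line by observing that each block of the product metric depends only on the variables of its own factor (so the geodesic equations, equivalently the variational problem, decouple); you simply spell out the details and add the uniqueness step needed to identify the product exponential. The only cosmetic remark is that you reuse the symbol $\Phi$ both for the Riemannian exponential of $M^{d}$ and for the action functional, which is worth disambiguating.
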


This is true because a block of the product metric does not depend on variables of the other blocks.

Consequently, a GIGO update with a diagonal covariance matrix with the sample $(x_{i})$ is equivalent to $d$ separate $1$-dimensional GIGO updates using the same samples. Moreover, $\G_{1} \cong \tilde{\G}_{1}$, the geodesics of which are given below.

We will show that  $\tGd$ and the ``hyperbolic space", of which the geodesics are known, are isometric. 

\subsection{Preliminaries: Poincaré half-plane, hyperbolic space}
\label{PrelimHspace}
In dimension $2$, the hyperbolic space is called ``hyperbolic plane", or Poincaré half-plane. We recall its definition:

\begin{defi}[Poincaré half-plane]
We call ``Poincaré half-plane" the Riemannian manifold $$\mathcal{H} = \{ (x,y) \in \R^{2} ,\, y >0 \},$$
with the metric $ds^{2} = \frac{dx^{2}+dy^{2}}{y^{2}}$.
\end{defi}

We also recall the expression of its geodesics (see for example \cite{GHL}):

\begin{prop}[Geodesics of the Poincaré half-plane]

The geodesics of the Poincaré half-plane are exactly the:
\label{poinca}
$$ t \mapsto \left(\Re(z(t) ), \Im (z(t) )\right),$$
where 
\begin{equation} z(t)=\frac{aie^{vt}+b}{cie^{vt}+d},\end{equation}
with $ad-bc=1$, and $v >0$.
\end{prop}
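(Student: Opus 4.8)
The plan is to reduce everything to a single, easily identified geodesic --- the positive imaginary axis --- and to move it around using the isometric action of real Möbius transformations on $\mathcal{H}$. Throughout I identify a point $(x,y)\in\mathcal{H}$ with the complex number $w=x+iy$, so that $\Im(w)>0$ and the metric becomes $ds^2=\frac{|dw|^2}{\Im(w)^2}$.

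First I would check that, for real $a,b,c,d$ with $ad-bc=1$, the map $M(w)=\frac{aw+b}{cw+d}$ is an isometry of $\mathcal{H}$. A short computation gives $\Im(M(w))=\frac{(ad-bc)\,\Im(w)}{|cw+d|^2}=\frac{\Im(w)}{|cw+d|^2}>0$, so that $M$ preserves $\mathcal{H}$, together with $M'(w)=\frac{ad-bc}{(cw+d)^2}=\frac{1}{(cw+d)^2}$, whence $\frac{|dM(w)|^2}{\Im(M(w))^2}=\frac{|M'(w)|^2\,|dw|^2}{\Im(M(w))^2}=\frac{|dw|^2}{\Im(w)^2}$. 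Thus $M$ preserves the metric.

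Next I would show that $c_v\colon t\mapsto ie^{vt}$ is a geodesic. Its speed is constant, $g(\dot c_v,\dot c_v)=\frac{(ve^{vt})^2}{(e^{vt})^2}=v^2$, as a geodesic parametrization requires; to confirm that it solves the geodesic equations (\ref{geoEq}), one notes that the only nonzero Christoffel symbols of $ds^2$ are $\Gamma^x_{xy}=-1/y$, $\Gamma^y_{xx}=1/y$ and $\Gamma^y_{yy}=-1/y$, so that along $x\equiv 0$ the equations collapse to $\ddot y-\dot y^2/y=0$, which $y=e^{vt}$ satisfies. (Alternatively, the reflection $(x,y)\mapsto(-x,y)$ is an isometry fixing exactly the imaginary axis, which is therefore totally geodesic.) Since an isometry leaves the geodesic Lagrangian $(q,v)\mapsto g(v,v)$ of Section \ref{SecRie} invariant, it maps its extremals to extremals; hence each curve $t\mapsto M(ie^{vt})=\frac{aie^{vt}+b}{cie^{vt}+d}$ is a geodesic, and every curve listed in the statement is a geodesic.

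The remaining and most delicate point is the converse: that these curves exhaust the geodesics. Here I would combine the uniqueness of a geodesic given its initial position and velocity (recalled in Section \ref{SecRie}) with the fact that the maps above act transitively on the unit tangent bundle of $\mathcal{H}$. Concretely, $w\mapsto y_0 w+x_0$ (normalized to $ad-bc=1$) carries $i$ to an arbitrary point $x_0+iy_0$, while the rotations fixing $i$ turn the upward tangent direction at $i$ into any direction. Given an arbitrary geodesic $\gamma$, I would set $p_0=\gamma(0)$, take $v=\sqrt{g(\dot\gamma(0),\dot\gamma(0))}>0$ for its constant speed, and pick $M$ sending $i$ to $p_0$ and the upward unit direction to $\dot\gamma(0)/v$; then $t\mapsto M(ie^{vt})$ has the same initial position and velocity as $\gamma$, so the two curves coincide. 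The main obstacle is organizing this surjectivity argument cleanly --- matching the four degrees of freedom of an initial condition (two for the point, two for the velocity) against the three parameters of $M$ modulo $\pm\mathrm{Id}$ together with the scalar $v$ --- which is exactly what transitivity on the unit tangent bundle guarantees.
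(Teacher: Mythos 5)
Your proof is correct. Note that the paper does not actually prove this proposition---it simply cites a reference (\cite{GHL}), so there is no in-paper argument to compare against; what you give is the standard textbook proof from such references. All the ingredients check out: the verification that real M\"obius maps with $ad-bc=1$ preserve $\mathcal{H}$ and its metric, the Christoffel symbols $\Gamma^{x}_{xy}=-1/y$, $\Gamma^{y}_{xx}=1/y$, $\Gamma^{y}_{yy}=-1/y$ (all others vanish) and the resulting equation $\ddot{y}-\dot{y}^{2}/y=0$ satisfied by $y=e^{vt}$, and the converse via transitivity of the isometry group on the unit tangent bundle combined with uniqueness of geodesics given initial position and velocity. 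The parameter count at the end ($3+1=4$) is the right sanity check, and the only implicit convention is that constant curves ($v=0$) are excluded, exactly as in the statement.
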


The geodesics are half-circles perpendicular to the line $y=0$, and vertical lines.

\begin{figure}[h]

\begin{center}

\input{GrGtex.tex}
\end{center}
\caption{Geodesics of the Poincaré half-plane}
\label{FigurePKR}
\end{figure}
The generalization to higher dimension is the following:

\begin{defi}[Hyperbolic space]
We call ``hyperbolic space of dimension $n$" the Riemannian manifold $$\mathcal{H}_{n} = \{ (x_{1}, ...,x_{n-1},y) \in \R^{n}, y >0 \},$$
with the metric $ds^{2} = \frac{dx_{1}^{2}+...+dx_{n-1}^{2}+dy^{2}}{y^{2}}$.
\end{defi}

Its geodesics stay in a plane containing the direction $y$ and the initial speed.\footnote{A possible way is to prove this is to use Noether's theorem: the Lagrangian for the geodesics is invariant under all translations along the $x_{i}$.} The induced metric on this plane is the metric of the Poincaré half-plane. The geodesics are therefore given by the following proposition:

\begin{prop}[Geodesics of the hyperbolic space]
If $\gamma \colon t \mapsto (x_{1}(t),...,x_{n-1}(t),y(t))=(\textbf{x} (t), y(t))$ is a geodesic of $\mathcal{H}_{n}$, then, there exists $a,b,c,d \in \R$ such that $ad-bc=1$ and $v > 0$ such that 

$\textbf{x} (t) = \textbf{x} (0) +   \frac{ \dot{\textbf{x}}_{0}}{\Vert \dot{\textbf{x}}_{0} \Vert } \tilde{x} (t) $, $y(t) = \Im (\gamma_{\C} (t) )$, with $\tilde{x} (t)=\Re (\gamma_{\C} (t) )$ and

\begin{equation} \gamma_{\C} (t):= \frac{aie^{vt}+b}{cie^{vt}+d}.\end{equation}
\end{prop}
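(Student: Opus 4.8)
The plan is to reduce the $n$-dimensional problem to the two-dimensional case already solved in Proposition \ref{poinca}. The geometric idea is that a geodesic of $\mathcal{H}_n$ never leaves the affine $2$-plane $P$ spanned by the vertical direction $y$ and the initial horizontal velocity $\dot{\mathbf{x}}_0$, and that $P$, equipped with the metric induced from $\mathcal{H}_n$, is isometric to the Poincaré half-plane. First I would establish this confinement using Noether's theorem (Theorem \ref{NoeTh}), as suggested by the footnote in the statement of the Definition of $\mathcal{H}_n$.

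Concretely, the Lagrangian governing the geodesics is
\[
\mathcal{L}(q,v) = g(v,v) = \frac{\dot{x}_1^2 + \cdots + \dot{x}_{n-1}^2 + \dot{y}^2}{y^2},
\]
and for each $i$ the one-parameter group $h^s \colon (x_1,\dots,x_{n-1},y) \mapsto (x_1,\dots,x_i+s,\dots,x_{n-1},y)$ is a symmetry in the sense of Definition \ref{AdmMap}, since $\mathcal{L}$ depends on the $x_i$ only through their derivatives. Noether's theorem then says each horizontal momentum $\frac{\partial \mathcal{L}}{\partial \dot{x}_i} = \frac{2\dot{x}_i}{y^2}$ is constant along the geodesic. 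Collecting these constants into a fixed vector $\mathbf{C} \in \R^{n-1}$ gives $\dot{\mathbf{x}}(t) = y(t)^2\, \mathbf{C}$, so the horizontal velocity always points along the fixed direction of $\mathbf{C}$, which is the direction of $\dot{\mathbf{x}}_0$. Integrating, $\mathbf{x}(t) - \mathbf{x}(0)$ stays colinear with $u := \dot{\mathbf{x}}_0 / \Vert \dot{\mathbf{x}}_0 \Vert$, that is $\mathbf{x}(t) = \mathbf{x}(0) + u\, \tilde{x}(t)$ with $\tilde{x}(t) := \langle \mathbf{x}(t) - \mathbf{x}(0),\, u \rangle$ (Euclidean inner product). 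This is exactly the claimed relation between $\mathbf{x}$ and $\tilde{x}$, and it confines the geodesic to the plane $P$ parametrized by $(\tilde{x}, y) \mapsto (\mathbf{x}(0) + \tilde{x}\,u,\, y)$. I assume here $\dot{\mathbf{x}}_0 \neq 0$; the degenerate case $\dot{\mathbf{x}}_0 = 0$ forces $\mathbf{C} = 0$, hence $\mathbf{x} \equiv \mathbf{x}(0)$ and a vertical-line geodesic, which is the $\tilde{x} \equiv 0$ limit of the stated formula.

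In the coordinates $(\tilde{x}, y)$ the metric induced on $P$ reads $\frac{d\tilde{x}^2 + dy^2}{y^2}$, because $u$ is a unit horizontal vector; this is precisely the Poincaré half-plane metric. The curve $t \mapsto (\tilde{x}(t), y(t))$ is then a geodesic of $P$: being a critical point of the energy functional $\Phi$ among all curves of $\mathcal{H}_n$ with the same endpoints, it is a fortiori critical among the smaller class of curves lying in $P$, and energies computed with the induced metric coincide with those in $\mathcal{H}_n$. Applying Proposition \ref{poinca} to this curve yields $a,b,c,d \in \R$ with $ad - bc = 1$ and $v > 0$ such that $\tilde{x}(t) = \Re(\gamma_{\C}(t))$ and $y(t) = \Im(\gamma_{\C}(t))$, where $\gamma_{\C}(t) = \frac{aie^{vt} + b}{cie^{vt} + d}$; combined with $\mathbf{x}(t) = \mathbf{x}(0) + u\,\tilde{x}(t)$, this is the desired conclusion.

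The step requiring the most care is the confinement to $P$ together with the claim that the restricted curve is genuinely a geodesic of $P$. An alternative, arguably cleaner, justification avoids the first-variation argument: the plane $P$ is the fixed-point set of the isometric reflection of $\mathcal{H}_n$ across it, hence totally geodesic, so a geodesic whose initial position and velocity are tangent to $P$ must remain in $P$ by uniqueness of geodesics given initial conditions. The remaining points — checking that the restricted parametrization still has constant speed so that Proposition \ref{poinca} applies verbatim, and verifying that the constants can be chosen to meet $\tilde{x}(0) = 0$ — are routine.
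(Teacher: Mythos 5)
Your proposal is correct and follows essentially the same route as the paper, which states (with a footnote pointing to Noether's theorem applied to the translations along the $x_{i}$) that the geodesics stay in a plane containing the $y$-direction and the initial speed, that the induced metric on this plane is the Poincaré half-plane metric, and then invokes Proposition \ref{poinca}. You have simply filled in the details of that sketch (the constant horizontal momenta $2\dot{x}_{i}/y^{2}$, the colinearity of $\mathbf{x}(t)-\mathbf{x}(0)$ with $\dot{\mathbf{x}}_{0}$, the degenerate vertical case, and the totally geodesic alternative), all of which are sound.
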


\subsection{Computing the GIGO update in $\tGd$}

If we want to implement the GIGO algorithm in $\tGd$, we need to compute the natural gradient in $\tGd$, and to be able to compute the Riemannian exponential of $\tGd$. 

Using Proposition \ref{FishG}, we can compute the metric of $\tGd$ in the parametrization $(\mu, \sigma ) \mapsto \mathcal{N} (\mu , \sigma^{2} I)$. We find:

\begin{equation}
\label{tgdMetric}
\left(
\begin{matrix}
\frac{1}{\sigma^{2}} & 0 &  \ldots & 0\\
   0 & \ddots  & \ddots & \vdots\\
   \vdots &  \ddots&  \frac{1}{\sigma^{2}} &0\\
    0& \ldots & 0& \frac{2d}{\sigma^{2}}
\end{matrix}
\right).
\end{equation}

Since this matrix is diagonal, it is easy to invert, and we immediately have the natural gradient, and, consequently, the IGO speed.

\begin{prop}
In $\tGd$, the IGO speed $Y$ is given by:

\begin{equation}
Y_{\mu}=\sum_{i}\hat{w}_{i} (x_{i}-\mu ),
\end{equation}

\begin{equation}
Y_{\sigma}= \sum_{i} \hat{w}_{i} \left( \frac{\trsp{(x_{i}-\mu )}(x_{i}-\mu )}{2d \sigma} - \frac{\sigma}{2}   \right).
\end{equation}
\end{prop}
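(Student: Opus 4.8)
The plan is to compute $Y$ directly from its definition in Equation \eqref{IGOSpeed}, since Proposition \ref{FishG} (through Equation \eqref{tgdMetric}) already hands us the Fisher matrix of $\tGd$ in the $(\mu,\sigma)$ parametrization, and that matrix is diagonal, hence trivially invertible. The only genuine work is to differentiate the Gaussian log-density in this parametrization and then multiply the resulting vanilla gradient by $I^{-1}$ componentwise.

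First I would write the density of $\mathcal{N}(\mu,\sigma^{2}I)$ in dimension $d$,
$$P_{\theta}(x) = (2\pi\sigma^{2})^{-d/2}\exp\left(-\frac{\Vert x-\mu\Vert^{2}}{2\sigma^{2}}\right),$$
so that
$$\ln P_{\theta}(x) = -\frac{d}{2}\ln(2\pi) - d\ln\sigma - \frac{\Vert x-\mu\Vert^{2}}{2\sigma^{2}}.$$
Differentiating with respect to the two parameters then gives
$$\frac{\partial \ln P_{\theta}(x)}{\partial \mu} = \frac{x-\mu}{\sigma^{2}}, \qquad \frac{\partial \ln P_{\theta}(x)}{\partial \sigma} = -\frac{d}{\sigma} + \frac{\Vert x-\mu\Vert^{2}}{\sigma^{3}}.$$

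Next I would invert the metric of Equation \eqref{tgdMetric}: being diagonal, its inverse $I^{-1}$ carries $\sigma^{2}$ on each of the $d$ entries corresponding to $\mu$, and $\sigma^{2}/(2d)$ on the single entry corresponding to $\sigma$. Substituting into $Y = I^{-1}\sum_{i}\hat{w}_{i}\,\partial_{\theta}\ln P_{\theta}(x_{i})$ multiplies the $\mu$-block of the summed gradient by $\sigma^{2}$, so the factor $\sigma^{-2}$ in $\partial_{\mu}\ln P$ cancels and $Y_{\mu}=\sum_{i}\hat{w}_{i}(x_{i}-\mu)$. The $\sigma$-component is multiplied by $\sigma^{2}/(2d)$, which turns $-d/\sigma + \Vert x_{i}-\mu\Vert^{2}/\sigma^{3}$ into $-\sigma/2 + \Vert x_{i}-\mu\Vert^{2}/(2d\sigma)$; writing $\Vert x_{i}-\mu\Vert^{2}=(x_{i}-\mu)^{T}(x_{i}-\mu)$ recovers exactly the claimed expression for $Y_{\sigma}$.

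There is no real obstacle here, as the computation is routine. The only points that demand care are that the derivative is taken with respect to $\sigma$ rather than $\sigma^{2}$ (so the normalization contributes $-d\ln\sigma$, hence the $-d/\sigma$ term that becomes $-\sigma/2$ after rescaling), and that the dimensional factor $d$ is kept consistent between the metric entry $2d/\sigma^{2}$ and the log-density. Verifying these factors is precisely what guarantees the stated coefficients $1/(2d\sigma)$ and $1/2$.
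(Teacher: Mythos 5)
Your proof is correct and follows essentially the same route as the paper: differentiate the log-density in the $(\mu,\sigma)$ parametrization and multiply by the inverse of the diagonal metric \eqref{tgdMetric}. In fact you are slightly more careful than the paper's own proof, which writes $\dep{\ln P_{\mu,\sigma}(x)}{\mu}=x-\mu$ (omitting the $\sigma^{-2}$ factor that your version correctly carries and then cancels against the $\sigma^{2}$ in $I^{-1}$).
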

\begin{proof}
We recall the IGO speed is defined by $Y=I^{-1} (\theta^{t} ) \sum_{i=1}^{N} \hat{w}_{i} \dep{\ln P_{\theta } (x_{i} )}{\theta}$. Since $P_{\mu , \sigma } (x)= (2\pi\sigma^{2} )^{-d/2} \exp (-\frac{\trsp{(x-\mu)}(x-\mu)}{2\sigma^{2}})$, we have
$$\dep{\ln P_{\mu , \sigma } (x )}{\mu}=x-\mu,$$
$$\dep{\ln P_{\mu , \sigma} (x )}{\sigma}=-\frac{d}{\sigma}+\frac{\trsp{(x-\mu)}(x-\mu)}{\sigma^{3}}.$$
The result follows.
\end{proof}

The metric defined by Equation \ref{tgdMetric} is not exactly the metric of the hyperbolic space, but with the substitution $\mu \leftarrow \frac{\mu}{\sqrt{2d}} $, the metric becomes $\frac{2d}{\sigma^{2}} I$, which is proportional to the metric of the hyperbolic space, and therefore defines the same geodesics.

\begin{theo}[Geodesics of $\tGd$]
\label{geodTgd}

If $\gamma \colon t \mapsto \mathcal{N} (\mu(t) ,\sigma (t)^{2} I)$ is a geodesic of $\tGd$, then, there exists $a,b,c,d \in \R$ such that $ad-bc=1$ and $v > 0$ such that 

$\mu (t) = \mu (0) + \sqrt{ 2d} \frac{ \dot{\mu}_{0}}{\Vert \dot{\mu}_{0} \Vert } \tilde{r} (t) $, $\sigma(t) = \Im (\gamma_{\C} (t) )$, with $\tilde{r} (t)=\Re (\gamma_{\C} (t) )$ and

\begin{equation} \gamma_{\C} (t):= \frac{aie^{vt}+b}{cie^{vt}+d}.\end{equation}

\end{theo}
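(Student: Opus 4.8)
The plan is to establish an explicit isometry between $\tGd$ and the hyperbolic space $\mathcal{H}_{n}$ of the appropriate dimension, and then simply transport the known geodesics of $\mathcal{H}_{n}$ (Proposition on geodesics of the hyperbolic space) back to $\tGd$ via this isometry. First I would read off from Equation \ref{tgdMetric} that the Fisher metric on $\tGd$ in the coordinates $(\mu, \sigma)$ is $ds^{2} = \frac{1}{\sigma^{2}}(d\mu_{1}^{2} + \cdots + d\mu_{d}^{2}) + \frac{2d}{\sigma^{2}} d\sigma^{2}$. The remark immediately preceding the theorem already does the essential work: applying the change of variable $\mu \leftarrow \frac{\mu}{\sqrt{2d}}$ turns this into $\frac{2d}{\sigma^{2}}\bigl(d\mu_{1}^{2}+\cdots+d\mu_{d}^{2}+d\sigma^{2}\bigr)$, which is $2d$ times the standard hyperbolic metric on $\mathcal{H}_{d+1}$ (with $\sigma$ playing the role of the distinguished coordinate $y$). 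I would note that a global constant factor $2d$ on the metric leaves the Christoffel symbols, and hence the geodesic equation \eqref{geoEq}, unchanged, so the two metrics define the same geodesics as unparametrized curves, and indeed the same affinely parametrized geodesics.

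Next I would invoke the Proposition giving the geodesics of $\mathcal{H}_{n}$: in the rescaled coordinates $(\tilde{\mu}, \sigma) := (\mu/\sqrt{2d}, \sigma)$, a geodesic has the form $\tilde{\mu}(t) = \tilde{\mu}(0) + \frac{\dot{\tilde{\mu}}_{0}}{\Vert \dot{\tilde{\mu}}_{0}\Vert}\,\Re(\gamma_{\C}(t))$ and $\sigma(t) = \Im(\gamma_{\C}(t))$, with $\gamma_{\C}(t) = \frac{aie^{vt}+b}{cie^{vt}+d}$, $ad-bc=1$, $v>0$. The final step is to undo the substitution, multiplying the $\tilde{\mu}$-component by $\sqrt{2d}$ to return to the original mean coordinate. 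This converts $\tilde{\mu}(0) \mapsto \mu(0)$ and converts the unit vector $\frac{\dot{\tilde{\mu}}_{0}}{\Vert\dot{\tilde{\mu}}_{0}\Vert}$ into $\sqrt{2d}\,\frac{\dot{\mu}_{0}}{\Vert\dot{\mu}_{0}\Vert}$, since the direction of $\dot{\mu}_{0}$ is unaffected by the scalar rescaling while the overall factor $\sqrt{2d}$ is pulled out front, yielding exactly $\mu(t) = \mu(0) + \sqrt{2d}\,\frac{\dot{\mu}_{0}}{\Vert\dot{\mu}_{0}\Vert}\tilde{r}(t)$ with $\tilde{r}(t) = \Re(\gamma_{\C}(t))$, as claimed.

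The one point requiring genuine care — and the place I would slow down — is the bookkeeping of the rescaling on the velocity vectors and the claim that the geodesics stay in a two-dimensional totally geodesic slice. The footnote in the hyperbolic-space discussion indicates the right mechanism: by Noether's theorem, invariance of the Lagrangian under translations in each $\mu_{i}$ gives conserved momenta, which force the $\mu$-component of the geodesic to move along the fixed straight-line direction $\dot{\mu}_{0}/\Vert\dot{\mu}_{0}\Vert$, reducing the problem to the Poincaré half-plane spanned by this direction and the $\sigma$-axis. I would therefore want to confirm that this planar reduction commutes with the constant rescaling of the metric and with the coordinate change $\mu \leftrightarrow \mu/\sqrt{2d}$, so that the plane containing the initial velocity in $\tGd$ maps to the corresponding plane in $\mathcal{H}_{d+1}$. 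Since the substitution is linear and diagonal and the metric rescaling is a global constant, this compatibility is routine, but stating it explicitly is what makes the transport of the geodesic formula rigorous rather than merely formal.
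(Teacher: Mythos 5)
Your proposal is correct and follows exactly the route the paper takes: the rescaling $\mu \leftarrow \mu/\sqrt{2d}$ turns the metric \eqref{tgdMetric} into $\frac{2d}{\sigma^{2}}I$, a constant multiple of the hyperbolic metric on $\mathcal{H}_{d+1}$, which preserves the Christoffel symbols and hence the geodesics, and the formula is then transported back from the Proposition on geodesics of the hyperbolic space. Your extra care about the planar (Noether) reduction and the fact that the rescaling does not change the unit direction $\dot{\mu}_{0}/\Vert\dot{\mu}_{0}\Vert$ only makes explicit what the paper leaves implicit.
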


Now, in order to implement the corresponding GIGO algorithm, we only need to be able to find the coefficients $a,b,c,d,v$ corresponding to an initial position $(\mu_{0}  ,\sigma_{0})$, and an initial speed $(\dot{\mu}_{0} , \dot{\sigma}_{0} )$. It is a tedious but easy computation, the result of which is given in Proposition \ref{calcultgdtwist}.

%
%
%

The pseudocode of GIGO in $\tGd$ is also given in the Appendix: it is obtained by concatenating Algorithms \ref{algobase} and \ref{AlgoTGD}. \footnote{Proposition \ref{calcultgdtwist} and the pseudocode in the Appendix allow the metric to be slightly modified, see Section \ref{TwistSec}.}

\section{GIGO in $\Gd$}

\subsection{Obtaining a first order differential equation for the geodesics of $\Gd$}
\label{SecGIGO}

In the case where both the covariance matrix and the mean can vary freely, the equations of the geodesics have been computed in \cite{SolGeo} and \cite{Eriksen}. However, these articles start with the equations of the geodesics obtained with the Christoffel symbols, then partially integrate them, obtaining equations \eqref{mu1} and \eqref{sigma1} of Theorem \ref{Noe1}. These equations are in fact a consequence of Noether's theorem, and can be found directly.

\begin{theo} Let $\gamma : t \mapsto \mathcal{N}(\mu_{t}, \Sigma_{t} )$ be a geodesic of $\G_{d}$. Then, the following quantities do not depend on $t$: 
\begin{equation}J_{\mu} = \Sigma_{t}^{-1} \dot{\mu }_{t},\end{equation}
\begin{equation}\label{NoeSigma}J_{\Sigma} = \Sigma_{t}^{-1} ( \dot{\mu}_{t} \trsp{ \mu_{t} }+  \dot{\Sigma }_{t}). \end{equation}
\label{INoe}

\end{theo}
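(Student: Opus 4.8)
The plan is to realize both quantities as Noether charges. By the definition of geodesics recalled above, the geodesics of $\Gd$ are the motions of the Lagrangian system with $\mathcal{L}(q,v)=g(v,v)$, where $g$ is the Fisher metric of Proposition \ref{FishG}. Taking $(\mu,\Sigma)$ themselves as coordinates and writing $v=(\dot\mu,\dot\Sigma)$, this Lagrangian is
\[ \mathcal{L}\big((\mu,\Sigma),(\dot\mu,\dot\Sigma)\big)=\dot\mu^{T}\Sigma^{-1}\dot\mu+\tfrac12\tr\big(\Sigma^{-1}\dot\Sigma\Sigma^{-1}\dot\Sigma\big). \]
For such a quadratic Lagrangian, the quantity attached by Noether's theorem to a one-parameter group $h^{s}$ with generator $\xi=\frac{\d h^{s}}{\d s}\vert_{s=0}=(\xi_\mu,\xi_\Sigma)$ is $\frac{\partial\mathcal{L}}{\partial v}(\xi)=2\,g(\dot\gamma,\xi)$, that is
\[ 2\Big(\dot\mu^{T}\Sigma^{-1}\xi_\mu+\tfrac12\tr\big(\Sigma^{-1}\dot\Sigma\Sigma^{-1}\xi_\Sigma\big)\Big). \]
So the whole argument reduces to exhibiting enough symmetries of $\mathcal{L}$ and reading off their charges.

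The symmetries will come from the affine action on the sample space $\Rd$. If $x\mapsto Ax+b$ is an invertible affine map, the image of $\mathcal{N}(\mu,\Sigma)$ under it is $\mathcal{N}(A\mu+b,A\Sigma A^{T})$; and since the Fisher metric is obtained from the Kullback--Leibler divergence, which is unchanged when one applies the same diffeomorphism to the sample space of both distributions, each such map is an isometry of $\Gd$. Isometries preserve $\mathcal{L}(q,v)=g(v,v)$, so every one-parameter subgroup of the affine group is a group of symmetries in the sense of Definition \ref{AdmMap}; this also furnishes the example promised there.

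Two families of such subgroups will suffice. First I would take the translations $h^{s}(\mu,\Sigma)=(\mu+sb,\Sigma)$, with generator $(b,0)$; here invariance is immediate since $\mathcal{L}$ does not depend on $\mu$. The charge is $2\dot\mu^{T}\Sigma^{-1}b=2\,b^{T}(\Sigma^{-1}\dot\mu)$, and as it is conserved for every $b\in\Rd$, the full vector $J_\mu=\Sigma^{-1}\dot\mu$ is conserved. Next I would take the linear maps $h^{s}(\mu,\Sigma)=(e^{sB}\mu,e^{sB}\Sigma e^{sB^{T}})$, with generator $\xi_\mu=B\mu$ and $\xi_\Sigma=B\Sigma+\Sigma B^{T}$. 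Substituting and simplifying with the cyclic invariance of the trace and the symmetry of $\Sigma$ and $\dot\Sigma$ should collapse the charge to $2\,\tr\big((\mu\dot\mu^{T}+\dot\Sigma)\Sigma^{-1}B\big)$. Since this is conserved for every matrix $B$ and the pairing $(M,B)\mapsto\tr(MB)$ is nondegenerate, the matrix $(\mu\dot\mu^{T}+\dot\Sigma)\Sigma^{-1}$ is conserved, and its transpose is exactly $J_\Sigma=\Sigma^{-1}(\dot\mu\mu^{T}+\dot\Sigma)$, which is therefore constant as well.

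The genuinely conceptual step is the second paragraph: identifying the affine group of $\Rd$ as a symmetry group of the Fisher--Gaussian Lagrangian, which is what makes Noether applicable and is cleanest to justify through the reparametrization invariance of the Kullback--Leibler divergence. Everything afterwards is bookkeeping, and the one place that needs care is the trace computation for the linear ($\mathrm{GL}$) charge: one must use that $\Sigma$ and $\dot\Sigma$ are symmetric both to identify the two halves of $\tfrac12\tr(\Sigma^{-1}\dot\Sigma\Sigma^{-1}(B\Sigma+\Sigma B^{T}))$ and to match the resulting matrix with $J_\Sigma$ up to transposition.
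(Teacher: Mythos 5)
Your proposal is correct and follows essentially the same route as the paper: the same energy Lagrangian, the same two families of one-parameter symmetry groups (translations of the mean and the linear action $(\mu,\Sigma)\mapsto(e^{sB}\mu,e^{sB}\Sigma e^{sB^{T}})$), and the same Noether charges, with your trace computation for the $\mathrm{GL}$ charge matching the paper's coefficient-wise computation with $E_{ij}$. The only cosmetic difference is that you justify the affine invariance of $\mathcal{L}$ abstractly via the invariance of the Kullback--Leibler divergence under sample-space diffeomorphisms, where the paper verifies it by direct substitution; both are valid.
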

\begin{proof}

It is a direct application of Noether's theorem, with suitable groups of diffeomorphisms. By Proposition \ref{FishG}, the Lagrangian associated with the geodesics of $\Gd$ is 
 \begin{equation}\mathcal{L}(\mu, \Sigma, \dot{\mu } , \dot{\Sigma } ) =  \trsp{\dot{\mu}}\Sigma^{-1} \dot{\mu} +
\frac{1}{2} \tr (\dot{\Sigma} \Sigma^{-1} \dot{\Sigma} \Sigma^{-1} ).\end{equation}
Its derivative is 
\begin{equation} \dep{\mathcal{L}}{\dot{\theta}} = \left[ (h, H) \mapsto \, 2\trsp{\dot{\mu} } \Sigma^{-1} h  +  \tr (H\Sigma^{-1}\dot{\Sigma} \Sigma^{-1})\right].\end{equation}

Let us show that this Lagrangian is invariant under affine changes of basis (thus illustrating Definition \ref{AdmMap}). 

The general form of an affine change of basis is $\phi_{\mu_{0},A}:(\mu,\Sigma) \mapsto (A\mu + \mu_{0},A\Sigma \trsp{A})$, with $\mu_{0} \in \Rd$ and $A\in \mathrm{GL}_{d} (\R )$.

We have 

\begin{equation}
\mathcal{L}( \phi_{\mu_{0},A}(\mu,\Sigma),d\phi_{\mu_{0},A}(\dot{\mu},\dot{\Sigma}) )=\trsp{\dot{\overline{A\mu}}}(A\Sigma \trsp{A})^{-1} \dot{\overline{A\mu}} +
\frac{1}{2} \tr \left( \dot{\overline{A\Sigma \trsp{A}}} (A\Sigma \trsp{A})^{-1} \dot{\overline{A\Sigma \trsp{A}}} (A\Sigma \trsp{A})^{-1} \right),
\end{equation}
and since $\dot{\overline{A\mu}}=A\dot{\mu}$ and $\dot{\overline{A\Sigma \trsp{A}}}=A\dot{\Sigma}\trsp{A}$, we find easily that 
\begin{equation}\mathcal{L}( \phi_{\mu_{0},A}(\mu,\Sigma),d\phi_{\mu_{0},A}(\dot{\mu},\dot{\Sigma}) )=\mathcal{L}(\mu, \Sigma, \dot{\mu } , \dot{\Sigma } ),\end{equation}
or in other words: $\mathcal{L}$ is invariant under $ \phi_{\mu_{0},A}$ for any $\mu_{0}\in \Rd,A \in GL_{d} (\R)$.

In order to use Noether's theorem, we also need \emph{one-parameter groups} of transformations. We choose the following:
\begin{enumerate}

\item Translations of the mean vector. For any $i \in [1,d]$, let $h^{s}_{i} \colon  (\mu ,\Sigma ) \mapsto  (\mu + s e_{i} , \Sigma )$, where $e_{i}$ is the $i$-th basis vector. We have $\frac{\d{h_{i}^{s}}}{ds}\vert_{s=0} =(e_{i},0) $, so by Noether's theorem, $$ \dep{\mathcal{L}}{\dot{\theta}} (e_{i},0 ) =2 \trsp{\dot{\mu}} \Sigma^{-1} e_{i}=2 \trsp{ e_{i}} \Sigma^{-1} \dot{\mu}$$ remains constant for all $i$. The fact that $J_{\mu}$ is an invariant immediately follows.

\item Linear base changes. For any $i,j \in [1,d] $, let $h^{s}_{i,j} :(\mu, \Sigma) \mapsto ( \exp ( s E_{ij} ) \mu, \exp (s  E_{ij} ) \Sigma  \exp (s E_{ji} ) )$, where $E_{ij}$ is the matrix with a $1$ at position $(i,j)$, and zeros elsewhere. We have
$$ \frac{\d{h^{s}_{E_{ij}}}}{\d{s}} \vert_{s=0} = ( E_{ij}\mu ,E_{ij}\Sigma  + \Sigma E_{ji} ). $$
So by Noether's theorem, we then obtain the following invariants:
\begin{align}J_{ij}:=& \dep{\mathcal{L}}{\dot{\theta}} (E_{ij}\mu ,E_{ij}\Sigma  + \Sigma E_{ji} )\\
 =&  2\, \trsp{ \dot{\mu} } \Sigma^{-1} E_{ij} \mu + \tr  ((E_{ij}\Sigma + 
\Sigma E_{ji} )\Sigma^{-1} \dot{\Sigma} \Sigma^{-1}    ) \\
=& 2 \, \trsp{ (\Sigma^{-1} \dot{\mu} )} E_{ij} \mu +  \tr(E_{ij} \dot{\Sigma}  \Sigma^{-1} ) +  \tr (E_{ji} \Sigma^{-1} \dot{\Sigma}   )\\
=& 2 ( J_{\mu} \trsp{ \mu })_{ij} + 2 ( \Sigma^{-1} \dot{\Sigma} )_{ij}, \end{align}
and the coefficients of $J_{\Sigma}$ in (\ref{NoeSigma}) are the $(J_{ij} / 2).$

\end{enumerate}
\end{proof}

This leads us to first order equations satisfied by the geodesics mentionned in \cite{Eriksen}, \cite{SolGeo} and \cite{GeoRemarks}.

\begin{theo}[GIGO-$\Sigma$]
\label{Noe1}
$t\mapsto \mathcal{N} (\mu_{t}, \Sigma_{t})$ is a geodesic of $\Gd$ if and only if 
$\mu: t \mapsto \mu_{t}$ and $\Sigma: t \mapsto \Sigma_{t}$ satisfy the equations

\begin{equation} \label{mu1} \dot{\mu}_{t} = \Sigma_{t} J_{\mu}
 \end{equation}
\begin{equation} \label{sigma1}\dot{\Sigma}_{t} = \Sigma_{t} (J_{\Sigma} - J_{\mu} \trsp{ \mu }_{t} )= \Sigma_{t} J_{\Sigma} - \dot{\mu}_{t} \trsp{ \mu   }_{t},
\end{equation}
where
$$J_{\mu} = \Sigma_{0}^{-1} \dot{\mu}_{0},$$
and
$$J_{\Sigma} =  \Sigma_{0}^{-1} \left( \dot{\mu}_{0} \trsp{ \mu_{0} }+\dot{\Sigma}_{0}   \right).$$
\end{theo}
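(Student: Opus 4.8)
The plan is to prove the two implications separately, using Theorem~\ref{INoe} for the direct implication and uniqueness of solutions of an ordinary differential equation for the converse. \emph{Direct implication.} Assume $t \mapsto \mathcal{N}(\mu_{t}, \Sigma_{t})$ is a geodesic of $\Gd$. Theorem~\ref{INoe} already asserts that $J_{\mu} = \Sigma_{t}^{-1}\dot{\mu}_{t}$ and $J_{\Sigma} = \Sigma_{t}^{-1}(\dot{\mu}_{t}\trsp{ \mu_{t} } + \dot{\Sigma}_{t})$ do not depend on $t$, so it only remains to solve these two identities for the velocities. Left-multiplying by $\Sigma_{t}$ gives $\dot{\mu}_{t} = \Sigma_{t}J_{\mu}$, which is \eqref{mu1}, and then $\dot{\Sigma}_{t} = \Sigma_{t}J_{\Sigma} - \dot{\mu}_{t}\trsp{ \mu_{t} } = \Sigma_{t}(J_{\Sigma} - J_{\mu}\trsp{ \mu_{t} })$, which is \eqref{sigma1}. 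Since $J_{\mu}$ and $J_{\Sigma}$ are constant, evaluating them at $t=0$ yields the stated closed forms. This requires no new computation.

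\emph{Converse.} Here lies the actual content. Suppose $(\mu_{t}, \Sigma_{t})$ solves the first-order system \eqref{mu1}--\eqref{sigma1} with the prescribed constants. One cannot compare it directly with the second-order geodesic equation \eqref{geoEq} without reintroducing the Christoffel symbols we are trying to avoid, so I would argue by uniqueness instead. The key observation is that the map $(\mu_{0}, \Sigma_{0}, \dot{\mu}_{0}, \dot{\Sigma}_{0}) \mapsto (\mu_{0}, \Sigma_{0}, J_{\mu}, J_{\Sigma})$ is a bijection, because the defining formulas invert when $\Sigma_{0}$ is invertible. Hence, once $\mu_{0}, \Sigma_{0}, J_{\mu}, J_{\Sigma}$ are fixed, the right-hand sides of \eqref{mu1}--\eqref{sigma1} are smooth functions of $(\mu, \Sigma)$ on the open set where $\Sigma$ is positive definite, and the Cauchy--Lipschitz theorem provides a unique maximal solution, whose initial velocity is exactly $(\Sigma_{0}J_{\mu},\, \Sigma_{0}J_{\Sigma} - \dot{\mu}_{0}\trsp{ \mu_{0} })$, i.e. $(\dot{\mu}_{0}, \dot{\Sigma}_{0})$. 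Now let $\tilde{\gamma}$ be the geodesic of $\Gd$ issued from $(\mu_{0}, \Sigma_{0})$ with that same initial velocity. By the direct implication, $\tilde{\gamma}$ satisfies \eqref{mu1}--\eqref{sigma1}, and its invariants, being computed from the same initial position and velocity, coincide with $J_{\mu}, J_{\Sigma}$. Thus $\tilde{\gamma}$ and $(\mu_{t}, \Sigma_{t})$ solve the same first-order Cauchy problem and must agree, so $(\mu_{t}, \Sigma_{t})$ is a geodesic.

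The main obstacle is conceptual rather than computational: one must be convinced that reducing the second-order geodesic system to the first-order system \eqref{mu1}--\eqref{sigma1} neither loses nor creates solutions. The uniqueness argument settles this, the two points to secure being that Noether's invariants stand in an invertible relation with the initial velocity $(\dot{\mu}_{0}, \dot{\Sigma}_{0})$, and that the reduced vector field is smooth on $\Gd$ (where $\Sigma \succ 0$) so that Cauchy--Lipschitz applies. A minor technical caveat is that the solution must remain in the domain $\Sigma \succ 0$; this is automatic, since it coincides with the geodesic $\tilde{\gamma}$, which lies in $\Gd$ by construction.
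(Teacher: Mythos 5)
Your proof is correct and follows the paper's intended route: the paper simply declares Theorem \ref{Noe1} ``an immediate consequence'' of Theorem \ref{INoe}, which is exactly your direct implication. The converse, which the paper leaves implicit, is correctly supplied by your Cauchy--Lipschitz uniqueness argument (the invariants determine, and are determined by, the initial velocity, and both the given solution and the geodesic solve the same first-order Cauchy problem), so your write-up is a faithful completion of the paper's one-line proof rather than a different method.
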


\textit{Proof.} It is an immediate consequence of Proposition \ref{INoe}. \\

These equations can be solved analytically (see \cite{SolGeo}), but usually, that is not the case, and they have to be solved numerically, for example with the Euler method (the corresponding algorithm, which we call GIGO-$\Sigma$, is described in the Appendix). The goal of the remainder of the subsection is to show that having to use the Euler method is fine.


To avoid confusion, we will call the step size of the GIGO algorithm ($\dt$ in Proposition \ref{DefGIGO}) ``GIGO step size", and the step size of the Euler method (inside a step of the GIGO algorithm) ``Euler step size". 

Having to solve our equations numerically brings two problems:

The first one is a theoretical problem: the main reason to study GIGO is its invariance under reparametrization of $\theta$, and we lose this invariance property when we use the Euler method. However, GIGO can get arbitrarily close to invariance by decreasing the Euler step size. In other words, the difference between two different IGO alorithms is $O(\dt^{2})$, and the difference between two different implementations of the GIGO algorithm is $O(h^{2})$, where $h$ is the Euler step size, and it is easier to reduce the latter. Still, without a closed form for the geodesics of $\Gd$, the GIGO update is rather expensive to compute, but it can be argued that most of the computation time will still be the computation of the objective function $f$.

The second problem is purely numerical: we cannot guarantee that the covariance matrix remains positive definite along the Euler method. Here, apart from finding a closed form for the geodesics, we have two solutions.

 We can enforce this \textit{a posteriori}: if the covariance matrix we find is not positive definite after a GIGO step, we repeat the failed GIGO step with a reduced Euler step size (in our implementation, we divided it by $4$, see Algorithm \ref{algo1} in the Appendix.). 

The other solution is to obtain differential equations on a square root of the covariance matrix\footnote{\emph{any} matrix $A$ such that $\Sigma = A \trsp{A}$.}.

\begin{theo}[GIGO-A]
\label{Noe2}
If $\mu: t \mapsto \mu_{t}$ and $A: t \mapsto A_{t}$ satisfy the equations
\begin{equation} \label{mu2}\dot{\mu}_{t} =  A_{t}\trsp{ A}_{t} J_{\mu},\end{equation}
\begin{equation} \label{A2}\dot{A}_{t} = \frac{1}{2} \trsp{(J_{\Sigma } - J_{\mu}\trsp{ \mu }_{t})} A_{t},\end{equation}
where 
$$J_{\mu} = \trsp{ (A_{0}^{-1})} A_{0}^{-1} \mu_{0} $$ 
and
$$ J_{\Sigma} = \trsp{ (A_{0}^{-1})} A_{0}^{-1} (\dot{\mu}_{0} \trsp{ \mu_{0}} + 
\dot{A}_{0} \trsp{ A_{0}} +
 A_{0} \trsp{ \dot{A}}_{0} ),$$
then, $t\mapsto \mathcal{N} (\mu_{t}, A_{t}\trsp{ A_{t}})$ is a geodesic of $\Gd$.
\end{theo}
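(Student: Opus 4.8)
The plan is to deduce this from the already-established characterization of geodesics in Theorem \ref{Noe1}. Writing $\Sigma_t := A_t \trsp{A_t}$, which is automatically symmetric (and positive definite as long as $A_t$ stays invertible), I would show that the pair $(\mu_t, \Sigma_t)$ satisfies Equations \eqref{mu1} and \eqref{sigma1} with the \emph{same} conserved quantities $J_\mu$ and $J_\Sigma$; since those equations determine a unique solution for given initial data, this identifies $t \mapsto \mathcal{N}(\mu_t, \Sigma_t)$ as a geodesic. The first bookkeeping step is to check that the constants $J_\mu, J_\Sigma$ prescribed here agree with those of Theorem \ref{Noe1}: this follows from $\Sigma_0^{-1} = \trsp{(A_0^{-1})} A_0^{-1}$ and $\dot{\Sigma}_0 = \dot{A}_0 \trsp{A_0} + A_0 \trsp{\dot{A}}_0$ (up to correcting the evident misprint $\mu_0 \to \dot{\mu}_0$ in the formula for $J_\mu$).

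The mean equation is immediate: Equation \eqref{mu2} reads $\dot{\mu}_t = A_t \trsp{A_t} J_\mu = \Sigma_t J_\mu$, which is exactly \eqref{mu1}. For the covariance, set $B := J_\Sigma - J_\mu \trsp{\mu}_t$ and differentiate $\Sigma_t = A_t \trsp{A_t}$, substituting \eqref{A2}, i.e. $\dot{A}_t = \tfrac12 \trsp{B} A_t$. A short computation gives $\dot{\Sigma}_t = \dot{A}_t \trsp{A_t} + A_t \trsp{\dot{A}}_t = \tfrac12 (\trsp{B}\Sigma_t + \Sigma_t B)$. This is manifestly symmetric, as it must be, but it is only half of what I want: Equation \eqref{sigma1} asserts the stronger identity $\dot{\Sigma}_t = \Sigma_t B$. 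The two agree precisely when $\Sigma_t B$ is itself symmetric, i.e. when $\trsp{B}\Sigma_t = \Sigma_t B$.

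Establishing this symmetry is the crux of the argument and the one genuinely non-routine step. I would introduce the antisymmetric defect $K_t := \Sigma_t B - \trsp{B}\Sigma_t$ and show it vanishes identically by a uniqueness argument. At $t=0$ one computes $\Sigma_0 B = \dot{\Sigma}_0$ (using the matching of the constants above), which is symmetric, so $K_0 = 0$. Differentiating $K_t$ while using $\dot{\Sigma}_t = \tfrac12(\trsp{B}\Sigma_t + \Sigma_t B)$, $\dot{B} = -J_\mu \trsp{\dot{\mu}}_t$, and $\Sigma_t J_\mu = \dot{\mu}_t$ (so that the outer-product terms $\Sigma_t \dot{B} = -\dot{\mu}_t \trsp{\dot{\mu}}_t$ and $\trsp{\dot{B}}\Sigma_t = -\dot{\mu}_t \trsp{\dot{\mu}}_t$ are symmetric and cancel), the remaining terms collapse to the homogeneous linear equation $\dot{K}_t = \tfrac12(K_t B + \trsp{B} K_t)$. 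Since $K_0 = 0$, Cauchy--Lipschitz forces $K_t \equiv 0$, hence $\dot{\Sigma}_t = \Sigma_t B$ and \eqref{sigma1} holds. With both \eqref{mu1} and \eqref{sigma1} verified and the initial invariants matched, Theorem \ref{Noe1} concludes that $t \mapsto \mathcal{N}(\mu_t, \Sigma_t \trsp{}) = \mathcal{N}(\mu_t, A_t \trsp{A_t})$ is a geodesic of $\Gd$.
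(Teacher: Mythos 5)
Your proof is correct, and its skeleton is the same as the paper's: write $\Sigma_t=A_t\trsp{A_t}$, observe that \eqref{mu2} gives \eqref{mu1} directly, compute $\dot{\Sigma}_t=\tfrac12\bigl(\trsp{B}\Sigma_t+\Sigma_t B\bigr)$ with $B=J_\Sigma-J_\mu\trsp{\mu}_t$, and reduce to Theorem \ref{Noe1} once $\Sigma_t B$ is known to be symmetric. Where you genuinely diverge is at that last step, which is indeed the crux. The paper disposes of it in one line: ``by Theorem \ref{Noe1}, $\Sigma(J_\Sigma-J_\mu\trsp{\mu})$ is symmetric (since $\dot{\Sigma}$ has to be symmetric).'' Read strictly, that appeal is to the symmetry of $\dot{\Sigma}$ \emph{along the geodesic}, whereas what is needed is the symmetry of $\Sigma_t B$ along the solution of the GIGO-$A$ system, which is not yet known to be the geodesic; making the paper's argument airtight requires an extra uniqueness step (construct $\tilde A_t$ along the true geodesic and identify it with $A_t$ by Cauchy--Lipschitz) that the paper leaves implicit. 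Your antisymmetric-defect argument --- $K_t:=\Sigma_t B-\trsp{B}\Sigma_t$, $K_0=0$ because $\Sigma_0 B_0=\dot{\Sigma}_0$, the terms $\Sigma_t\dot B=-\dot\mu_t\trsp{\dot\mu}_t=\trsp{\dot B}\Sigma_t$ cancel, and $\dot K_t=\tfrac12(K_tB+\trsp{B}K_t)$ forces $K_t\equiv 0$ --- closes exactly this gap, entirely on the GIGO-$A$ side, at the cost of one more computation. I checked the algebra ($\Sigma BB-\trsp{B}\trsp{B}\Sigma=KB+\trsp{B}K$) and it is right; your correction of the misprint $\mu_0\to\dot\mu_0$ in $J_\mu$ is also warranted, as the twisted version of the theorem later in the paper confirms. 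In short: same route, but your version of the key symmetry step is the more rigorous of the two.
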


\begin{proof}
This is a simple rewriting of Theorem \ref{Noe1}: if we write $\Sigma:= A\trsp{A}$, we find that $J_{\mu}$ and $J_{\Sigma}$ are the same as in Theorem \ref{Noe1}, and we have
$$\dot{\mu} = \Sigma J_{\mu},$$
and
$$\dot{\Sigma}= (\dot{A} \trsp{ A} + A \trsp{ \dot{A}} )=\frac{1}{2} \trsp{(J_{\Sigma } - J_{\mu}\trsp{ \mu })} A \trsp{A} + \frac{1}{2}A \trsp{A} (J_{\Sigma} -J_{\mu} \trsp{\mu} )$$
$$=\frac{1}{2} \trsp{(J_{\Sigma } - J_{\mu}\trsp{ \mu })}\Sigma + \frac{1}{2}\Sigma (J_{\Sigma} -J_{\mu} \trsp{\mu} )
= \frac{1}{2}\Sigma (J_{\Sigma} -J_{\mu} \trsp{\mu} ) + \frac{1}{2}  \trsp{[\Sigma (J_{\Sigma} -J_{\mu} \trsp{\mu} )]}.  $$

By Theorem \ref{Noe1}, $\Sigma (J_{\Sigma} -J_{\mu} \trsp{\mu} )$ is symmetric (since $\dot{\Sigma}$ has to be symmetric). Therefore, we have $\dot{\Sigma}=\Sigma (J_{\Sigma} -J_{\mu} \trsp{\mu} )$, and the result follows.

\end{proof}

Notice that Theorem \ref{Noe2} gives an equivalence, whereas Theorem \ref{Noe1} does not. The reason is that the square root of a symmetric positive definite matrix is not unique. Still, it is canonical, see discussion in Section \ref{Discussion}.

As for Theorem \ref{Noe1}, we can solve Equations \ref{mu2} and \ref{A2} numerically, and we obtain another algorithm (Algorithm \ref{algo2} in the Appendix, we will call it GIGO-$A$), with a behavior similar to the previous one (with equations \ref{mu1} and \ref{sigma1}). For both of them, numerical problems can arise when the covariance matrix is almost singular.

%
%
%
We have not managed to find any example where one of these two algorithms converged to the minimum of the objective function whereas the other did not, and their behavior is almost the same.
 
More interestingly, the performances of these two algorithms are also the same as the performances of the exact GIGO algorithm, using the equations of Section \ref{ExplGeo}.
 
Notice that even though GIGO-$A$ directly maintains a square root of the covariance matrix, which makes sampling new points easier\footnote{To sample a point from $\mathcal{N}(\mu,\Sigma)$, a square root of $\Sigma$ is needed.}, both GIGO-$\Sigma$ and GIGO-$A$ still have to invert the covariance matrix (or its square root) at each step, which is as costly as the decomposition, so one of these algorithms is roughly as expensive to compute as the other.


\subsection{Explicit form of the geodesics of $\Gd$}
\label{ExplGeo}
We now give the exact geodesics of $\Gd$: the following results are a rewriting of Theorem $3.1$ and its first corollary in \cite{SolGeo}.

\begin{theo}
Let $(\dot{\mu}_{0}, \dot{\Sigma}_{0})\in T_{\mathcal{N}(0,I)}\Gd$. The geodesic of $\Gd$ starting from $\mathcal{N} (0,1)$ with initial speed $(\dot{\mu}_{0}, \dot{\Sigma}_{0})$ is given by:
\begin{equation}
\label{eqGeo}
\exp_{\mathcal{N}(0,I)} (s\dot{\mu}_{0}, s\dot{\Sigma}_{0})=\mathcal{N}\left( 2R(s)\mathrm{sh}(\frac{sG}{2})G^{-}\dot{\mu}_{0}, R(s)\trsp{R(s)} \right),
\end{equation}
where $\exp$ is the Riemannian exponential of $\Gd$, $G$ is any matrix satisfying
\begin{equation}
G^{2}=\dot{\Sigma}_{0}^2+2\dot{\mu}_{0}\trsp{\dot{\mu}_{0}},
\end{equation}
\begin{equation}\label{eqR}
R(s)=\trsp{\left(\left(\mathrm{ch}(\frac{sG}{2})-\dot{\Sigma}_{0} G^{-} \mathrm{sh} (\frac{sG}{2}) \right)^{-1}\right)}
\end{equation}
and $G^{-}$ is a pseudo-inverse of $G$.\footnote{In \cite{SolGeo}, the existence of $G$ (as a square root of $\dot{\Sigma}_{0}^2+2\dot{\mu}_{0}\trsp{\dot{\mu}_{0}}$) is proved. Notice that anyway, in the expansions of \eqref{eqGeo} and \eqref{eqR}, only even powers of $G$ appear.}
\end{theo}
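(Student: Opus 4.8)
The plan is to verify directly that the proposed curve satisfies the first-order geodesic equations of Theorem \ref{Noe1}, rather than re-deriving them from scratch. Since Theorem \ref{Noe1} gives an \emph{if and only if} characterization of geodesics in terms of the conserved quantities $J_\mu$ and $J_\Sigma$, it suffices to check that the explicit $\mu(s)$ and $\Sigma(s) = R(s)\trsp{R(s)}$ given by \eqref{eqGeo} and \eqref{eqR} solve \eqref{mu1} and \eqref{sigma1}, with the correct initial conditions $\mu(0)=0$, $\Sigma(0)=I$, $\dot\mu(0)=\dot\mu_0$, $\dot\Sigma(0)=\dot\Sigma_0$. First I would compute $J_\mu$ and $J_\Sigma$ at $s=0$: since $\Sigma_0 = I$, we get $J_\mu = \dot\mu_0$ and $J_\Sigma = \dot\Sigma_0$ (the $\dot\mu_0\trsp{\mu_0}$ term vanishes because $\mu_0 = 0$). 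So the target equations reduce to $\dot\mu_s = \Sigma_s\,\dot\mu_0$ and $\dot\Sigma_s = \Sigma_s\dot\Sigma_0 - \dot\mu_s\trsp{\mu_s}$.

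Next I would set up the differentiation of $R(s)$. Writing $R(s) = \trsp{(C(s)^{-1})}$ with $C(s) = \mathrm{ch}(sG/2) - \dot\Sigma_0 G^- \mathrm{sh}(sG/2)$, I would compute $\dot C(s) = \tfrac{1}{2}\big(G\,\mathrm{sh}(sG/2) - \dot\Sigma_0 G^- G\,\mathrm{ch}(sG/2)\big)$. The key functional-analytic point, flagged in the footnote of the statement, is that only even powers of $G$ occur in $C(s)$ and $R(s)$, so these are genuinely well-defined functions of $G^2 = \dot\Sigma_0^2 + 2\dot\mu_0\trsp{\dot\mu_0}$ even though $G$ and $G^-$ are individually only defined up to sign/pseudo-inverse ambiguity; I would want to note that $G^- G$ acts as the identity on the relevant range and that $\dot\Sigma_0$ commutes suitably with $G^2$. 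From $\dot C$ I get $\dot R = -\trsp{(C^{-1}\dot C\, C^{-1})}$, and hence $\dot\Sigma = \dot R\,\trsp{R} + R\,\trsp{\dot R}$. The bulk of the work is then an algebraic manipulation showing this equals $\Sigma_s\dot\Sigma_0 - \dot\mu_s\trsp{\mu_s}$; the hyperbolic identity $\mathrm{ch}^2 - \mathrm{sh}^2 = 1$ and the relation $G^2 = \dot\Sigma_0^2 + 2\dot\mu_0\trsp{\dot\mu_0}$ should be exactly what makes the cross-terms collapse.

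For the mean equation, I would differentiate $\mu_s = 2R(s)\,\mathrm{sh}(sG/2)\,G^-\dot\mu_0$ and check $\dot\mu_s = \Sigma_s\dot\mu_0 = R\trsp{R}\dot\mu_0$. This requires expressing $\dot R\,\mathrm{sh}(sG/2) + R\cdot\tfrac{1}{2}G\,\mathrm{ch}(sG/2)$ in terms of $\trsp{R}$, which again uses the structure of $C(s)$ and the definition of $G^2$; the factor $2\cdot G^-\cdot\tfrac12 G = G^-G$ collapsing to identity on the range of $\dot\mu_0$ is the mechanism I expect. The initial-condition check is a direct substitution at $s=0$ using $\mathrm{ch}(0)=I$, $\mathrm{sh}(0)=0$, $C(0)=I$, giving $R(0)=I$ and $\mu(0)=0$, and differentiating once more to recover $\dot\mu_0$ and $\dot\Sigma_0$.

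The main obstacle I anticipate is handling the pseudo-inverse $G^-$ and the possible non-invertibility or degeneracy of $G$ rigorously: one must argue that all the expressions appearing are independent of the choice of $G$ and $G^-$ (exploiting that only even powers appear, as the footnote observes) and that the cancellations go through on the kernel of $G$ as well as on its range. A clean way around the bookkeeping is to first treat the generic case where $G$ is invertible, verify the identities there by the hyperbolic-identity collapse described above, and then extend to the degenerate case by continuity or by the even-power observation. Since the statement explicitly defers to \cite{SolGeo} for the underlying result, I would present this as a verification that the rewritten closed form is consistent with the Noether equations of Theorem \ref{Noe1}, rather than an independent derivation.
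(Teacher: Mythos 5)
The paper offers no proof of this statement at all: it is presented verbatim as ``a rewriting of Theorem 3.1 and its first corollary in \cite{SolGeo}'', so your attempt is necessarily taking a different route from the paper. Your overall strategy is sound and is arguably the right way to make the result self-contained: by Theorem \ref{Noe1} (an equivalence) together with uniqueness of the geodesic with prescribed initial position and speed, it does suffice to check that $\mu(s)$ and $\Sigma(s)=R(s)R(s)^{T}$ satisfy $\dot\mu_{s}=\Sigma_{s}\dot\mu_{0}$ and $\dot\Sigma_{s}=\Sigma_{s}\dot\Sigma_{0}-\dot\mu_{s}\mu_{s}^{T}$ with the stated initial conditions, and your reduction of $J_{\mu}$, $J_{\Sigma}$ at $s=0$ is correct.

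However, as written the proposal has a genuine gap: the entire content of the theorem is the algebraic collapse you defer to (``the hyperbolic identity \ldots should be exactly what makes the cross-terms collapse'', ``the mechanism I expect''), and that collapse is precisely the nontrivial computation carried out in \cite{SolGeo}. Worse, one of the auxiliary facts you lean on is false in general: $\dot\Sigma_{0}$ does \emph{not} commute with $G^{2}=\dot\Sigma_{0}^{2}+2\dot\mu_{0}\dot\mu_{0}^{T}$ unless $\dot\mu_{0}$ is an eigenvector of $\dot\Sigma_{0}$ (one has $\dot\Sigma_{0}G^{2}-G^{2}\dot\Sigma_{0}=2(\dot\Sigma_{0}\dot\mu_{0}\dot\mu_{0}^{T}-\dot\mu_{0}\dot\mu_{0}^{T}\dot\Sigma_{0})$), so the cancellation cannot be organized around commuting $\dot\Sigma_{0}$ past functions of $G$; the terms $\mathrm{ch}(sG/2)$, $\dot\Sigma_{0}G^{-}\mathrm{sh}(sG/2)$ and their transposes must be tracked in the correct order throughout. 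Your handling of $G^{-}$ (even-power observation, genericity plus continuity) is fine, but until the order-sensitive computation of $\dot R R^{T}+R\dot R^{T}$ is actually carried through, the proposal is a plan for a proof rather than a proof. If you intend to keep the verification route, the honest minimal version is what the paper implicitly does: cite \cite{SolGeo} for the identity and present Theorem \ref{Noe1} only as a consistency check at $s=0$.
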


And since for all $A \in GL_{d} (\R)$, for all $\mu_{0}\in \Rd$, the application \begin{equation}\deff{\phi}{\Gd}{\Gd}{\mathcal{N} (\mu, \Sigma )}{\mathcal{N} (A \mu + \mu_{0}, A\Sigma \trsp{A}) }
\end{equation}
preserves the geodesics, we find the general expression for the geodesics of $\Gd$.
\begin{cor}
\label{VraiesGeos}
Let $\mu_{0} \in \Rd$, $A \in GL_{d} (\R)$, and $(\dot{\mu}_{0}, \dot{\Sigma}_{0})\in T_{\mathcal{N}(\mu_{0},A_{0}\trsp{A_{0}})}\Gd$. The geodesic of $\Gd$ starting from $\mathcal{N} (\mu,\Sigma)$ with initial speed $(\dot{\mu}_{0}, \dot{\Sigma}_{0})$ is given by:
\begin{equation}
\exp_{\mathcal{N}(\mu_{0},A_{0}\trsp{A_0})} (s\dot{\mu}_{0}, s\dot{\Sigma}_{0})=\mathcal{N}(\mu_1,A_1\trsp{A_1}),
\end{equation}
with
\begin{equation}
\mu_1=2A_{0}R(s)\mathrm{sh}({\frac{sG}{2}})G^{-}A_{0}^{-1}\dot{\mu}_{0}+\mu_0,
\end{equation}
\begin{equation}
A_1=A_{0}R(s),
\end{equation}
where $\exp$ is the Riemannian exponential of $\Gd$, $G$ is any matrix satisfying
\begin{equation}
G^{2}=A_{0}^{-1}(\dot{\Sigma}_{0} \Sigma_0^{-1} \dot{\Sigma}_{0}+2\dot{\mu}_{0}\trsp{\dot{\mu}_{0}})\trsp{(A_{0}^{-1})},
\end{equation}
\begin{equation}
R(s)=\trsp{\left(\left(\mathrm{ch}(\frac{sG}{2})-A_{0}^{-1}\dot{\Sigma}_{0}\trsp{(A_{0}^{-1})} G^{-} \mathrm{sh} (\frac{sG}{2})\right)^{-1}\right)},
\end{equation}
and $G^{-}$ is a pseudo-inverse of $G$.
\end{cor}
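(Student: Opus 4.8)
The plan is to reduce the general statement to the base case already established in the preceding theorem, namely the geodesic emanating from $\mathcal{N}(0,I)$, by transporting it with the affine map $\phi = \phi_{\mu_0, A_0}$ introduced just before the corollary. The proof of Theorem \ref{INoe} shows that the Lagrangian of $\Gd$ is invariant under every $\phi_{\mu_0, A}$; hence each such map is an isometry of the Fisher metric and, in particular, sends geodesics to geodesics (this is exactly the ``preserves the geodesics'' property quoted before the statement). So the geodesic starting at $\mathcal{N}(\mu_0, A_0 \trsp{A_0}) = \phi(\mathcal{N}(0,I))$ should be obtained simply by applying $\phi$ to the base-case geodesic, provided the initial velocities are matched correctly.

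First I would compute the differential of $\phi$ in order to translate velocities. Since $\phi(\mu, \Sigma) = (A_0 \mu + \mu_0, A_0 \Sigma \trsp{A_0})$, its differential acts by $\dot{\mu} \mapsto A_0 \dot{\mu}$ and $\dot{\Sigma} \mapsto A_0 \dot{\Sigma} \trsp{A_0}$. Therefore, to produce a geodesic through $\mathcal{N}(\mu_0, A_0\trsp{A_0})$ with the prescribed speed $(\dot{\mu}_0, \dot{\Sigma}_0)$, I must feed the base-case theorem the pulled-back initial speed $(A_0^{-1}\dot{\mu}_0,\, A_0^{-1}\dot{\Sigma}_0 \trsp{(A_0^{-1})})$ at $\mathcal{N}(0,I)$, and then push the resulting curve forward by $\phi$. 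Uniqueness of the geodesic given a base point and an initial velocity guarantees that the transported curve is indeed the geodesic claimed.

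The remaining work is a substitution and simplification driven by a single identity. Plugging $\dot{\mu}_0 \leftarrow A_0^{-1}\dot{\mu}_0$ and $\dot{\Sigma}_0 \leftarrow A_0^{-1}\dot{\Sigma}_0\trsp{(A_0^{-1})}$ into the base-case formula for $G^2$ gives $A_0^{-1}\dot{\Sigma}_0 \trsp{(A_0^{-1})} A_0^{-1} \dot{\Sigma}_0 \trsp{(A_0^{-1})} + 2 A_0^{-1}\dot{\mu}_0 \trsp{\dot{\mu}_0} \trsp{(A_0^{-1})}$; the key identity $\trsp{(A_0^{-1})}A_0^{-1} = (A_0\trsp{A_0})^{-1} = \Sigma_0^{-1}$ collapses the middle factor and recovers the stated $G^2 = A_0^{-1}(\dot{\Sigma}_0 \Sigma_0^{-1}\dot{\Sigma}_0 + 2\dot{\mu}_0\trsp{\dot{\mu}_0})\trsp{(A_0^{-1})}$. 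The same substitution turns the base-case $R(s)$ into its stated form, and applying $\phi$ to the base-case mean $2R(s)\mathrm{sh}(\frac{sG}{2})G^- A_0^{-1}\dot{\mu}_0$ and covariance $R(s)\trsp{R(s)}$ yields $\mu_1 = 2 A_0 R(s)\mathrm{sh}(\frac{sG}{2})G^- A_0^{-1}\dot{\mu}_0 + \mu_0$ and $A_0 R(s)\trsp{R(s)}\trsp{A_0} = (A_0R(s))\trsp{(A_0R(s))}$, i.e. $A_1 = A_0 R(s)$.

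The only genuine subtlety — and the step I would treat most carefully — is the bookkeeping on $d\phi$: one must pull the target velocity $(\dot{\mu}_0,\dot{\Sigma}_0)$ \emph{back} through $A_0$ to obtain the base-point velocity (rather than pushing it forward), since the theorem is stated at $\mathcal{N}(0,I)$ and $\phi$ carries that point to $\mathcal{N}(\mu_0,A_0\trsp{A_0})$. Once the direction of $d\phi$ and its inverse is fixed, everything else is the routine matrix algebra above, controlled entirely by the identity $\trsp{(A_0^{-1})}A_0^{-1} = \Sigma_0^{-1}$.
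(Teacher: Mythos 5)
Your proposal is correct and follows exactly the route the paper takes: the sentence preceding the corollary states that the affine maps $\phi_{\mu_{0},A}$ preserve the geodesics (a consequence of the Lagrangian invariance established in the proof of Theorem \ref{INoe}), and the corollary is obtained by pushing the base-case geodesic at $\mathcal{N}(0,I)$ forward by $\phi_{\mu_{0},A_{0}}$ after pulling back the initial velocity, with the simplification governed by $\trsp{(A_{0}^{-1})}A_{0}^{-1}=\Sigma_{0}^{-1}$. Your attention to the direction of $d\phi$ and the explicit verification of the $G^{2}$ and $R(s)$ substitutions is, if anything, more detailed than what the paper records.
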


It should be noted that the final values for mean and covariance do not depend on the choice of $G$.\footnote{As a square root of 
$$A_{0}^{-1}(\dot{\Sigma}_{0} \Sigma_0^{-1} \dot{\Sigma}_{0}+2\dot{\mu}_{0}\trsp{\dot{\mu}_{0}})\trsp{(A_{0}^{-1})} .$$} The reason for this is that $\mathrm{ch}(G)$ is a Taylor series in $G^2$, and so are $\mathrm{sh}(G)G^-$ and $G^-\mathrm{sh}(G)$.

For our practical implementation, we actually used these Taylor series instead of the expression of the corollary.

\section{Comparing GIGO, xNES and pure rank-$\mu$ CMA-ES}
\label{SecComp}
\subsection{Definitions}
In this section, we recall the xNES and pure rank-$\mu$ CMA-ES, and we describe them in the IGO framework, thus allowing a reasonable comparison with the GIGO algorithms.

\subsubsection{xNES}
We recall a restriction\footnote{This restriction is sufficient to describe the numerical experiments in \cite{xNES}, the article which introduced xNES.} of the xNES algorithm, introduced in \cite{xNES}.

\begin{defi}[xNES algorithm]
\label{xN}
The xNES algorithm with sample size $N$, weights $w_{i}$, and learning rates $\eta_{\mu}$ and $\eta_{\Sigma}$ updates the parameters $\mu\in \Rd$, $A \in M_{d} (\R )$ with  the following rule:
At each step, $N$ points $x_{1},...,x_{N}$ are sampled from the distribution $\mathcal{N} (\mu, A\trsp{ A})$. Without loss of generality, we assume $f(x_{1} ) <... < f(x_{N})$.
The parameter is updated according to:
$$\mu \leftarrow \mu + \eta_{\mu} A G_{\mu},$$

$$A \leftarrow A \exp (\eta_{\Sigma} G_{M} / 2),$$

where, setting $z_{i} =A^{-1} (x_{i} - \mu )$:
$$G_{\mu} = \sum_{i=1}^{N} w_{i} z_{i},$$
$$G_{M} = \sum_{i=1}^{N} w_{i} (z_{i}\trsp{ z_{i}} - I ).$$

\end{defi}

The more general version decomposes the matrix $A$ as $\sigma B$, where $\det B=1$, and uses two different learning rates for $\sigma$ and for $B$. We gave the version where these two learning rates are equal\footnote{In particular, for the default parameters in \cite{xNES}, these two learning rates are equal. }. This restriction of the xNES algorithm can be described in the IGO framework, provided all the learning rates are equal (most of the elements of the proof can be found in \cite{xNES}\footnote{The proposition below essentially states that xNES is a natural gradient update.}, or in \cite{IGO}):

\begin{prop}[xNES as IGO]
\label{xNESIGO}
The xNES algorithm with sample size $N$, weights $w_{i}$, and learning rates $\eta_{\mu}=\eta_{\Sigma}=\dt$ coincides with the IGO algorithm with sample size $N$, weights $w_{i}$, step size $\dt$, and in which, given the current position $(\mu_{t}, A_{t})$, the set of Gaussians is parametrized by
$$ \phi_{\mu_t, A_t} :(\delta, M) \mapsto \mathcal{N} \left(   \mu_{t} + A_{t} \delta , \left( A_{t} \exp (\frac{1}{2} M ) \right) \trsp{ \left(  A_{t} \exp (\frac{1}{2} M ) \right)} \right),$$
with $\delta \in \R^{m}$ and $M \in \mathrm{Sym} (\R^{m})$.

The parameters maintained by the algorithm are $(\mu , A )$, and the $x_{i}$ are sampled from $\mathcal{N} (\mu , A\trsp{A}).$
\end{prop}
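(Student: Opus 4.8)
The plan is to compute the IGO update directly in the parametrization $\phi_{\mu_t,A_t}$ and then check that, after pushing the result back to the $(\mu,A)$ variables, it reproduces the xNES rule. The essential feature is that $\phi_{\mu_t,A_t}$ is \emph{adapted} to the current distribution: the base point $(\mu_t,A_t)$ corresponds to $(\delta,M)=(0,0)$, and at this particular point the Fisher metric takes a trivial (block-diagonal, constant) form, so that the natural gradient coincides with the ordinary gradient up to explicit scalar factors. I would therefore carry out three computations, all at $(\delta,M)=(0,0)$: the vanilla gradient of the log-likelihood, the Fisher matrix, and finally their combination into one IGO step.

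First I would write the log-density. Setting $z=A_t^{-1}(x-\mu_t)$ and using that $M$ is symmetric (so $\exp(\frac12 M)\exp(\frac12 M)^{T}=\exp(M)$ and $\Sigma(\delta,M)=A_t\exp(M)A_t^{T}$), one gets
\begin{equation}
\ln P_{(\delta,M)}(x)=\mathrm{const}-\tfrac12\tr(M)-\tfrac12(z-\delta)^{T}\exp(-M)(z-\delta).
\end{equation}
Differentiating at the origin gives $\partial_\delta \ln P|_0=z$ for the mean part. For the covariance part the determinant contributes $-\tfrac12 I$ (since $\ln\det\exp(M)=\tr M$) and the quadratic form contributes $\tfrac12 zz^{T}$ (the Frobenius gradient of $M\mapsto z^{T}\exp(-M)z$ at $0$ being $-zz^{T}$), so $\partial_M \ln P|_0=\tfrac12(zz^{T}-I)$.

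Next I would compute the Fisher matrix at the origin via Proposition \ref{FishG}. Since $\partial_M\mu=0$ and $\partial_\delta\Sigma=0$, the mean and covariance contributions never mix, so the metric is block-diagonal in $(\delta,M)$. The $\delta$-block is $A_t^{T}\Sigma^{-1}A_t=I$, because $\Sigma=A_tA_t^{T}$ at the origin. For the $M$-block, using $\partial_{M_{kl}}\Sigma|_0=A_tE_{kl}A_t^{T}$ and hence $\Sigma^{-1}\partial_{M_{kl}}\Sigma=(A_t^{T})^{-1}E_{kl}A_t^{T}$, the trace term collapses to $\tfrac12\tr(E_{kl}E_{mn})$, i.e. the $M$-block is one half of the Frobenius inner product on symmetric matrices. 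Inverting, the natural gradient equals the vanilla gradient in the $\delta$ direction and \emph{twice} the vanilla gradient in the $M$ direction, giving natural gradients $z$ and $zz^{T}-I$ respectively.

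Finally, because the current point is $(\delta,M)=(0,0)$, one step of IGO with step size $\dt$ is simply $\delta^{\mathrm{new}}=\dt\sum_i\hat w_i z_i$ and $M^{\mathrm{new}}=\dt\sum_i\hat w_i(z_iz_i^{T}-I)$, that is $\delta^{\mathrm{new}}=\dt\,G_\mu$ and $M^{\mathrm{new}}=\dt\,G_M$ once the IGO weights $\hat w_i$ are identified with the xNES weights $w_i$. Applying $\phi_{\mu_t,A_t}$ then yields $\mu\leftarrow\mu_t+\dt\,A_tG_\mu$ and $A\leftarrow A_t\exp(\tfrac12\dt\,G_M)$, which is exactly the xNES update with $\eta_\mu=\eta_\Sigma=\dt$. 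The main obstacle is the matrix calculus in the $M$ direction: one must differentiate $\exp(M)$ and $\exp(-M)$ at the origin, restrict consistently to symmetric matrices, and track the three competing factors of $\tfrac12$ (from the parametrization $\exp(\tfrac12 M)$, from the Fisher trace term, and from the log-likelihood gradient) so that they combine correctly; the rest is routine.
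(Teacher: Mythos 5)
Your proof is correct and follows essentially the same route as the paper: compute the IGO step in the adapted parametrization $\phi_{\mu_t,A_t}$ at $(\delta,M)=(0,0)$, where the vanilla gradient and the Fisher metric are explicit, and push the result back to $(\mu,A)$. You are in fact slightly more careful than the paper about the $M$-block of the Fisher metric (one half of the Frobenius form on symmetric matrices, rather than literally the identity), and this is precisely the bookkeeping that makes the competing factors of $\tfrac{1}{2}$ cancel so that the natural gradient in the $M$ direction is exactly $G_{M}$.
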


\begin{proof}

Let us compute the IGO update in the parametrization $\phi_{\mu_t, A_t}$: we have $\delta^{t}=0$, $M^{t}=0$, and by using Proposition \ref{FishG}, we can see that for this parametrization, the Fisher information matrix at $(0,0)$ is the identity matrix. The IGO update is therefore,
$$(\delta , M)^{t+\dt} = (\delta , M)^{t} + \dt Y_{\delta}(\delta, M) +\dt Y_{M} (\delta, M)=\dt Y_{\delta}(\delta, M) +\dt Y_{M} (\delta, M), $$
where
$$Y_{\delta} (\delta , M) = \sum_{i=1}^{N} w_{i} \nabla_{\delta  } \ln (p(x_{i} \vert (\delta , M) )$$
and 
$$ Y_{M} (\delta, M)= \sum_{i=1}^{N} w_{i} \nabla_{ M } \ln (p(x_{i} \vert (\delta , M) ).$$
Since\footnote{Using $\tr (M)= \log (\det (\exp (M) ) )$}
$$\ln P_{\delta , M} (x) = - \frac{d}{2} \ln (2\pi ) - \ln (\det A) - \frac{1}{2} \tr M - \frac{1}{2} \Vert \exp (-\frac{1}{2} M ) A^{-1} (x-\mu - A\delta ) \Vert^{2},$$
a straightforward computation yields
\begin{align*}  Y_{\delta} (\delta , M)  & =  \sum_{i=1}^{N} w_{i} z_{i}= G_{\mu},
\end{align*}
and
\begin{align*}
 Y_{M} (\delta , M)  &= \frac{1}{2}  \sum_{i=1}^{N} w_{i} (z_{i} \trsp{ z_{i}} - I ) = G_{M}.
\end{align*}
Therefore, the IGO update is:

$$\delta (t+\dt )=\delta (t) + \dt G_{\mu }, $$
$$M (t+\dt ) = M (t) + \dt G_{M}, $$
or, in terms of mean and covariance matrix:
$$\mu(t+\dt)=\mu (t) +\dt A(t) G_{\mu}$$
$$A (t+\dt) = A(t) \exp (\dt G_{M} / 2), $$
or
$$\Sigma (t+\dt) = A (t)\exp (\dt G_{M} ) \trsp{ A(t)}. $$

This is the xNES update.

\end{proof}

\subsubsection*{Using a square root of the covariance matrix}
\label{Discussion}
Firstly, we recall that the IGO framework (on $\Gd$, for example) emphasizes the Riemannian manifold structure on $\Gd$. All the algorithms studied here (including GIGO, which is not strictly speaking an IGO algorithm) define a trajectory in $\Gd$ (a new point for each step), and to go from a point $\theta$ to the next one ($\theta '$), we follow some curve $\gamma : [0,\dt] \to \Gd$, with $\gamma (0)=\theta$, $\gamma (\dt)=\theta '$, and $\dot{\gamma } (0)$ given by the natural gradient ($\dot{\gamma} (0)=\sum_{i=1}^{N} \hat{w}_{i} \tilde{\nabla}_{\theta} P_{\theta} (x_{i}) \in T_{\theta} \Gd$).\\

To be compatible with this point of view, an algorithm giving an update rule for a \emph{square root} of the covariance matrix $A$ has to satisfy the following condition: For a given initial speed, the covariance matrix $\Sigma^{t+\dt}$ after one step must depend only on $\Sigma^{t}$, and not on the square root $A^{t}$ chosen for $\Sigma^{t}$.

The xNES algorithm does satisfy this condition: consider two xNES algorithms, with the same learning rates, respectively at $(\mu, A_{1}^{t})$ and $(\mu , A_{2}^{t})$, with $A_{1}^{t}\trsp{(A_{1}^{t})}=A_{2}^{t}\trsp{(A_{2}^{t})}$ (i.e.: they define the same $\Sigma^{t}$), using the same samples $x_{i}$ to compute the natural gradient update\footnote{\label{ftModif} We are talking about the $x_{i}$ of Definition \ref{xN}, which are sampled from $\mathcal{N} (\mu, A\trsp{A})$, not the $z_{i}$, which are sampled from $\mathcal{N} (0, I)$, and are actually used to compute the $x_{i}$, by $x_{i}=\mu+ Az_{i}$. Notice that consequently, the ``sampled points" will \emph{not} be the same for the two algorithms if they use the \emph{same} random number generator with the \emph{same} seed. However, $\theta '$ will have the same probability distribution for both algorithms.}, then we will have $\Sigma_{1}^{t+\dt}=\Sigma_{2}^{t+\dt}$.
Using the definitions of Section \ref{SecTj}, we have just shown that what we will call the ``xNES trajectory" is well-defined.\\ 
 
It is also important to notice that, in order to be well defined, a natural gradient algorithm updating a square root of the covariance matrix has to specify more conditions than simply following the natural gradient.

The reason for this is that the natural gradient is a vector tangent to $\Gd$: it lives in a space of dimension $d(d+3)/2$ (the dimension of $\Gd$) whereas the vector $(\mu, A)$ lives in a space of dimension $d(d+1)$, which is too large: we will have no uniqueness for the solutions of the equations expressed with $A$ (this is why Theorem \ref{Noe2} is simply an implication, whereas Theorem \ref{Noe1} is an equivalence). 

More precisely, let us consider $A$ in $\mathrm{GL}_{d} (\R )$, and $v_{A}$, $v_{A}'$ two infinitesimal updates of $A$.
Since $\Sigma=A\trsp{A}$, the infinitesimal update of $\Sigma$ corresponding to $v_{A}$ (resp. $v_{A}'$) is $v_{\Sigma}=A\trsp{v_{A}} + v_{A} \trsp{A}$ (resp. $v_{\Sigma}'=A\trsp{v_{A}'} + v_{A}' \trsp{A}$).

It is now easy to see that $v_{A}$ and $v_{A}'$ define the same direction for $\Sigma$ (i.e. $v_{\Sigma}=v_{\Sigma}'$) if and only if $A\trsp{M}+M\trsp{A}=0$, where $M=v_{A}-v_{A}'$. This is equivalent to $A^{-1}M$ antisymmetric.

For any $A\in \mathrm{M}_{d} (\R )$, let us denote by $T_{A}$ the space of the matrices $M$ such that $A^{-1}M$ is antisymmetric, or in other words $T_{A}:=\lbrace u\in \mathrm{M}_{d} (\R),\, A\trsp{u}+u\trsp{A}=0  \rbrace$. Having a subspace $S_{A}$ in direct sum with $T_{A}$ for all $A$ is sufficient (but not necessary) to have a well-defined update rule. Namely, consider the (linear) application
$$
\deff{\phi_{A}}{\mathrm{M}_{d}(\R )}{\mathrm{S}_{d} (\R )}{v_{A}}{A\trsp{v_{A}}+v_{A}\trsp{A}},
$$
sending an infinitesimal update of $A$ to the corresponding update of $\Sigma$. It is not bijective, but as we have seen before, $\mathrm{Ker} \, \phi_{A}=T_{A}$, and therefore, if we have, for some $U_{A}$,
 \begin{equation} \mathrm{M}_{d} (\R )=U_{A}\oplus T_{A}, \end{equation}
then $\phi_{A}\vert_{U_{A}}$ is an isomorphism. 
Let $v_{\Sigma}$ be an infinitesimal update of $\Sigma$. We choose the following update of $A$ corresponding to $v_{\Sigma}$:
\begin{equation}
v_{A}:=(\phi_{A}\vert_{U_{A}})^{-1} (v_{\Sigma} ).\end{equation}
\\

Any $U_{A}$ such that $U_{A} \oplus T_{A}=\mathrm{M}_{d} (\R )$ is a reasonable choice to pick $v_{A}$ for a given $v_{\Sigma}$. The choice
$S_{A}=\lbrace u\in \mathrm{M}_{d} (\R),\, A\trsp{u} - u\trsp{A}=0  \rbrace$ has an interesting additional property: it is the orthogonal of $T_{A}$ for the norm 
\begin{equation}
\Vert v_{A} \Vert_{\Sigma}^{2}:=\mathrm{Tr} ( \trsp{v_{A}} \Sigma^{-1} v_{A} )=\mathrm{Tr} ( \trsp{(A^{-1}v_{A})} A^{-1}v_{A} ).\footnote{To prove this, remark that $T_{A}=\lbrace M\in \mathrm{M}_{d} (\R ), A^{-1}M\text{ antisymmetric} \rbrace$ and $S_{A}=\lbrace M\in \mathrm{M}_{d} (\R ), A^{-1}M\text{ symmetric} \rbrace$, and that if $M$ is symmetric and $N$ is antisymmetric, then \begin{equation} \mathrm{Tr} ( \trsp{M} N )= \sum_{i,j=1}^{d} m_{ij} n_{ij}=\sum_{i=1}^{d} m_{ii} n_{ii}+ \sum_{1\leq i < j\leq d} m_{ij}(n_{ij}+n_{ji}) =0,\end{equation}}\\
\end{equation}
and consequently, it can be defined without referring to the parametrization, which makes it a canonical choice.

Let us now show that this is the choice made by xNES and GIGO-$A$ (which are well-defined algorithms updating a square root of the covariance matrix).

\begin{prop}
Let $A\in M_{n} (\R )$. The $v_{A}$ given by the xNES and GIGO-A algorithms lies in $S_{A}= \lbrace u\in \mathrm{M}_{d} (\R),\, A\trsp{u} - u\trsp{A}=0  \rbrace=S_{A}$.

\end{prop}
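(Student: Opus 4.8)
The plan is to handle xNES and GIGO-A separately, in each case reading the infinitesimal update $v_A$ of the square root directly off its update rule and then checking the defining relation $A v_A^{T} - v_A A^{T} = 0$ of $S_A$. I would work with this relation rather than the equivalent ``$A^{-1}v_A$ symmetric'' characterization from the footnote above, since it makes both verifications a one-line matrix manipulation and does not require $A$ to be invertible. In both algorithms $A$ is updated by a rule of the form $A \mapsto A + \epsilon\, v_A + O(\epsilon^{2})$ in the step size $\epsilon$, so it suffices to isolate the first-order term $v_A$.

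First I would treat xNES. Its update in Definition \ref{xN} is $A \mapsto A\exp(\eta_\Sigma G_M/2)$, whose first-order term in $\eta_\Sigma$ is $v_A = \frac{1}{2} A G_M$, where $G_M = \sum_i w_i(z_i z_i^{T} - I)$. Since $G_M$ is manifestly symmetric, $A v_A^{T} - v_A A^{T} = \frac{1}{2} A G_M^{T} A^{T} - \frac{1}{2} A G_M A^{T} = 0$, so $v_A \in S_A$.

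Next I would treat GIGO-A. Equation \eqref{A2} gives $\dot A = \frac{1}{2}(J_\Sigma - J_\mu \mu^{T})^{T} A$, so here $v_A = \frac{1}{2} B^{T} A$ with $B := J_\Sigma - J_\mu \mu^{T}$. Writing $\Sigma = A A^{T}$ and using that $\Sigma$ is symmetric, I obtain $A v_A^{T} - v_A A^{T} = \frac{1}{2}\Sigma B - \frac{1}{2} B^{T}\Sigma = \frac{1}{2}\bigl(\Sigma B - (\Sigma B)^{T}\bigr)$. The one external input needed is the symmetry of $\Sigma B = \Sigma(J_\Sigma - J_\mu\mu^{T})$, which is exactly the fact established inside the proof of Theorem \ref{Noe2} (there it arose as the requirement that $\dot\Sigma$ be symmetric). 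With that, the last expression vanishes and $v_A \in S_A$.

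I expect no genuine difficulty here: once $v_A$ is correctly identified, the verification is routine linear algebra. The only two points deserving care are extracting the first-order term from the \emph{multiplicative} exponential form of the xNES update (so that the symmetric object at play is $G_M$ itself, not $\exp(\eta_\Sigma G_M/2)$), and, for GIGO-A, reusing the symmetry of $\Sigma(J_\Sigma - J_\mu\mu^{T})$ proved earlier rather than re-deriving it.
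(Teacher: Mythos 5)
Your proposal is correct and follows essentially the same route as the paper: for xNES you read off $v_A=\tfrac12 AG_M$ and use the symmetry of $G_M$, and for GIGO-$A$ you read off $v_A=\tfrac12(J_\Sigma-J_\mu\trsp{\mu})^{T}A$ and invoke the symmetry of $\Sigma(J_\Sigma-J_\mu\trsp{\mu})$ from Theorem \ref{Noe1}, exactly as the paper does. The only cosmetic difference is that you verify $A\trsp{v_A}-v_A\trsp{A}=0$ directly, whereas the paper phrases the xNES case via the equivalent condition that $A^{-1}v_A$ be symmetric.
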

\begin{proof}
For xNES, let us write $\dot{\gamma} (0)= (v_{\mu}, v_{\Sigma})$, and $v_{A}:=\frac{1}{2}AG_{M}$. We have $A^{-1} v_{A}=\frac{1}{2}G_{M}$, and therefore, forcing $M$ (and $G_{M}$) to be symmetric in xNES is equivalent to $A^{-1}v_{A}= \trsp{(A^{-1}v_{A}) }$, which can be rewritten as $A\trsp{v_{A}}=v_{A}\trsp{A}$.
For GIGO-$A$, Equation \ref{sigma1} shows that $\Sigma_{t} (J_{\Sigma} - J_{\mu} \trsp{ \mu }_{t} ) $ is symmetric, and with this fact in mind, Equation \ref{A2} shows that we have $A\trsp{v_{A}}=v_{A}\trsp{A}$ ($v_{A}$ is $\dot{A}_{t}$).\end{proof}

%


\subsubsection{Pure rank-$\mu$ CMA-ES}

We now recall the pure rank-$\mu$ CMA-ES algorithm. The general CMA-ES algorithm is described in \cite{CMATuto}. 
\begin{defi}[pure rank-$\mu$ CMA-ES algorithm]
The pure rank-$\mu$ CMA-ES algorithm with sample size $N$, weights $w_{i}$, and learning rates $\eta_{\mu}$ and $\eta_{\Sigma}$ is defined by the following update rule:
At each step, $N$ points $x_{1},...,x_{N}$ are sampled from the distribution $\mathcal{N} (\mu, \Sigma)$. Without loss of generality, we assume $f(x_{1} ) <... < f(x_{N})$.
The parameter is updated according to:
$$\mu \leftarrow \mu + \eta_{\mu} \sum_{i=1}^{N} w_{i} (x_{i} - \mu),$$
$$\Sigma \leftarrow \Sigma + \eta_{\Sigma} \sum_{i=1}^{N} w_{i} ((x_{i} - \mu) \trsp{ (x_{i} - \mu)} - \Sigma).$$

\end{defi}

The pure rank-$\mu$ CMA-ES can also be described in the IGO framework, see for example \cite{CMAIGO}.

\begin{prop}[pure rank-$\mu$ CMA-ES as IGO]
The pure rank-$\mu$ CMA-ES algorithm with sample size $N$, weights $w_{i}$, and learning rates $\eta_{\mu}=\eta_{\Sigma}=\dt$ coincides with the IGO algorithm with sample size $N$, weights $w_{i}$, step size $\dt$, and the parametrization $(\mu , \Sigma).$
\end{prop}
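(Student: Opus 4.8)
The pure rank-$\mu$ CMA-ES algorithm with sample size $N$, weights $w_{i}$, and learning rates $\eta_{\mu}=\eta_{\Sigma}=\dt$ coincides with the IGO algorithm with sample size $N$, weights $w_{i}$, step size $\dt$, and the parametrization $(\mu , \Sigma)$.

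The plan is to compute the IGO update in the native parametrization $\theta = (\mu, \Sigma)$ directly from Definition \ref{IGO}, and to check that the resulting update rule for $\mu$ and $\Sigma$ is exactly the pure rank-$\mu$ CMA-ES update. By Definition \ref{IGO}, the IGO update reads $\theta^{t+\dt} = \theta^{t} + \dt\, I^{-1}(\theta^{t}) \sum_{i=1}^{N} \hat{w}_{i} \dep{\ln P_{\theta}(x_{i})}{\theta}$, so everything reduces to two ingredients: the log-likelihood gradient $\dep{\ln P_{\theta}(x_{i})}{\theta}$ in the coordinates $(\mu, \Sigma)$, and the inverse Fisher matrix $I^{-1}$ acting on it. This mirrors the structure of the xNES proof (Proposition \ref{xNESIGO}), except that here I stay in the ``flat'' parametrization $(\mu,\Sigma)$ rather than the exponential one, so no change of chart is needed.

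First I would differentiate $\ln P_{\mu,\Sigma}(x) = -\tfrac{d}{2}\ln(2\pi) - \tfrac{1}{2}\ln\det\Sigma - \tfrac{1}{2}\trsp{(x-\mu)}\Sigma^{-1}(x-\mu)$ with respect to $\mu$ and to the entries of $\Sigma$, obtaining $\dep{\ln P}{\mu} = \Sigma^{-1}(x-\mu)$ and, for the covariance block, $\dep{\ln P}{\Sigma} = \tfrac{1}{2}\Sigma^{-1}\bigl((x-\mu)\trsp{(x-\mu)} - \Sigma\bigr)\Sigma^{-1}$ (the standard derivatives of $\ln\det\Sigma$ and of the quadratic form). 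Next I would apply $I^{-1}$, using the block-diagonal structure of the Fisher metric from Proposition \ref{FishG}: in the $(\mu,\Sigma)$ parametrization the mean and covariance blocks decouple, the mean block being $\Sigma^{-1}$ and the covariance block being the map $\dot\Sigma \mapsto \tfrac{1}{2}\Sigma^{-1}\dot\Sigma\Sigma^{-1}$. Applying the inverse of each block cancels the $\Sigma^{-1}$ factors: the natural gradient of the mean becomes $x_{i}-\mu$, and the natural gradient in the covariance direction becomes $(x_{i}-\mu)\trsp{(x_{i}-\mu)} - \Sigma$. Summing against the weights $\hat{w}_{i}$ and multiplying by $\dt$ then gives precisely $\mu \leftarrow \mu + \dt\sum_{i}\hat{w}_{i}(x_{i}-\mu)$ and $\Sigma \leftarrow \Sigma + \dt\sum_{i}\hat{w}_{i}\bigl((x_{i}-\mu)\trsp{(x_{i}-\mu)} - \Sigma\bigr)$, which is the stated CMA-ES rule with $\eta_{\mu}=\eta_{\Sigma}=\dt$.

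The only genuine subtlety, and hence the step I would treat most carefully, is the inversion of the covariance block of the Fisher matrix: since $\Sigma$ is symmetric, $I$ is an operator on the space $\mathrm{Sym}(\R^{d})$ rather than on all of $\mathrm{M}_{d}(\R)$, and one must verify that the map $\dot\Sigma \mapsto \tfrac{1}{2}\Sigma^{-1}\dot\Sigma\Sigma^{-1}$ is indeed the correct metric restricted to symmetric matrices and that its inverse is $\dot S \mapsto 2\,\Sigma \dot S \,\Sigma$ (equivalently, that $\tfrac{1}{2}\Sigma^{-1}\dot\Sigma\Sigma^{-1}$ has inverse $2\Sigma(\cdot)\Sigma$), so that the $\Sigma^{-1}$ factors from the log-likelihood gradient cancel exactly and leave the symmetric matrix $(x_{i}-\mu)\trsp{(x_{i}-\mu)} - \Sigma$. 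This is a routine check once one reads off the quadratic form in Proposition \ref{FishG} and identifies the associated operator, and it parallels the computation already carried out implicitly in the xNES proof; the rest is a direct verification. I would therefore present the proof as a short computation, noting that it is essentially the same calculation as in Proposition \ref{xNESIGO} but in the coordinates $(\mu,\Sigma)$, where the natural gradient step coincides with the CMA-ES update by construction.
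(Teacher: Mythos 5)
Your proposal is correct. The paper itself gives no proof of this proposition (it defers to the cited reference on CMA-ES as a natural gradient method), but your computation is exactly the standard argument that reference carries out and that parallels the paper's own proof of Proposition \ref{xNESIGO}: in the chart $(\mu,\Sigma)$ the Fisher metric is block-diagonal with mean block $\Sigma^{-1}$ and covariance block $\dot\Sigma \mapsto \tfrac{1}{2}\Sigma^{-1}\dot\Sigma\Sigma^{-1}$ on $\mathrm{Sym}(\R^{d})$, so applying the inverse blocks to the log-likelihood gradients yields $x_{i}-\mu$ and $(x_{i}-\mu)\trsp{(x_{i}-\mu)}-\Sigma$, which is the pure rank-$\mu$ CMA-ES update. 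You also correctly isolate the one genuine subtlety, namely treating the covariance block as an operator on symmetric matrices before inverting it.
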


%
%
%
%
%

%
%

\subsection{Twisting the metric }
\label{TwistSec}
As we can see, the IGO framework does not allow to recover the learning rates for xNES and pure rank-$\mu$ CMA-ES, which is a problem, since usually, the covariance learning rate is set much smaller than the mean learning rate (see either \cite{xNES} or \cite{CMATuto}).

A way to recover these learning rates is to incorporate them directly into the metric (see also Blockwise GIGO, in Section \ref{ssBGIGO}). More precisely:
\begin{defi}[Twisted Fisher metric]
\label{TwistMet}
Let $\eta_{\mu}, \eta_{\Sigma} \in \R$, and
let $(P_{\theta} )_{\theta \in \Theta}$ be a family of normal probability distributions: $P_{\theta } = \mathcal{N} (\mu (\theta ), \Sigma (\theta ))$, with $\mu$ and $\Sigma $ $C^{1}$.
We call ``$(\eta_{\mu},\eta_{\Sigma} )$-twisted Fisher metric" the metric defined by:

\begin{equation} I_{i,j}(\eta_{\mu }, \eta_{\Sigma} ) (\theta )= \frac{1}{\eta_{\mu }} \dep{\trsp{ \mu} }{\theta_{i}} \Sigma^{-1} \dep{\mu }{\theta_{j}} + \frac{1}{\eta_{\Sigma}}  \frac{1}{2} \tr \left( \Sigma^{-1} \dep{ \Sigma}{\theta_{i}} \Sigma^{-1}  \dep{\Sigma}{\theta_{j}} \right). \end{equation}
\end{defi}

All the remainder of this section is simply a rewriting of the work in Section \ref{SecIGO} with the twisted Fisher metric instead of the regular Fisher metric.\\

This approach seems to be somewhat arbitrary. Arguably, the mean and the covariance play a ``different role" in the definition of a Gaussian (only the covariance can affect diversity, for example), but we lack a resasonable intrinsic characterization that would make this choice of twisting more natural. Moreover, this construction can be slightly generalized (see Appendix).

Still, we can define the corresponding Riemannian manifolds:
\begin{nota}We denote by $\Gd (\eta_{\mu},\eta_{\Sigma})$ (resp. $\tGd (\eta_{\mu},\eta_{\Sigma})$) the manifold of Gaussian distributions (resp. Gaussian distributions with covariance matrix proportional to the identity) in dimension $d$, equipped with the $(\eta_{\mu},\eta_{\Sigma} )$-twisted Fisher metric.
\end{nota}

And the IGO flow and the IGO algorithms can be modified to take into account the twisting of the metric: the $(\eta_{\mu},\eta_{\Sigma})$-twisted IGO flow reads

\begin{equation}
\label{TwIGOFlow} \frac{\d{\theta^{t}}}{\d{t}} =I(\eta_{\mu }, \eta_{\Sigma} )^{-1} (\theta )\int_{X} W^{f}_{\theta^{t}} (x)\nabla_{\theta} \ln P_{\theta} (x) P_{\theta^{t} }( \d{x} ).\footnote{The only difference with \eqref{FlotIGO} is that $I^{-1}(\theta)$ has been replaced by $I(\eta_{\mu }, \eta_{\Sigma} )^{-1} (\theta )$.} \end{equation}


This leads us to the twisted IGO algorithms.

\begin{defi}
The $(\eta_{\mu}, \eta_{\Sigma} )$-twisted IGO algorithm associated with parametrisation $\theta$, sample size $N$, step size $\dt$ and selection scheme $w$ is given by the following update rule:

$$\theta^{t+\dt} = \theta^{t} + \dt I(\eta_{\mu }, \eta_{\Sigma} )^{-1} (\theta^{t} ) \sum_{i=1}^{N} \hat{w}_{i} \dep{\ln P_{\theta } (x_{i} )}{\theta} . $$
\end{defi}

\begin{defi}
The $(\eta_{\mu}, \eta_{\Sigma} )$-twisted geodesic IGO algorithm associated with sample size $N$, step size $\dt$ and selection scheme $w$  is given by the following update rule:

\begin{equation} \theta^{t+\dt} = \exp_{\theta^{t}} (Y\dt) \end{equation}
where 
\begin{equation} Y= I(\eta_{\mu }, \eta_{\Sigma} )^{-1} (\theta^{t} ) \sum_{i=1}^{N} \hat{w}_{i} \dep{\ln P_{\theta } (x_{i} )}{\theta}. \end{equation}

By definition, the twisted geodesic IGO algorithm does not depend on the parametrization\footnote{Given that are working in $\Gd (\eta_{\mu},\eta_{\Sigma} )$, which is a ``less canonical" manifold than $\Gd$.}.
\end{defi}

%
%

We can also notice that there is some redundancy between $\dt$, $\eta_{\mu}$, and $\eta_{\Sigma}$: the only values actually appearing in the equations are $\dt \eta_{\mu}$ and $\dt \eta_{\Sigma}$. More formally:

\begin{prop}[Obvious remark] 
\label{obvrem}
Let $k, d, N\in \N$, $\eta_{\mu}, \eta_{\Sigma}, \dt, \lambda_{1}, \lambda_{2} \in \R$, and $w\colon [0;1] \to \R$.

The $(\eta_{\mu}, \eta_{\Sigma})$-twisted IGO algorithm with sample size $N$, step size $\dt$,   and selection scheme $w$ coincides with the $(\lambda_{1} \eta_{\mu},\lambda_{1} \eta_{\Sigma})$-twisted IGO algorithm with sample size $N$, step size $\lambda_{2} \dt$,   and selection scheme $\frac{1}{\lambda_{1}\lambda_{2}}w$. The same is true for geodesic IGO.
\end{prop}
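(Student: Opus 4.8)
The plan is to unwind the defining update rules and track exactly how the symbols $\eta_{\mu}$, $\eta_{\Sigma}$, $\dt$, $\lambda_1$, $\lambda_2$ enter them, verifying that the two algorithms produce literally the same update at every step. First I would write out the update for the $(\eta_{\mu},\eta_{\Sigma})$-twisted IGO algorithm with step size $\dt$ and selection scheme $w$, namely
\begin{equation}
\theta^{t+\dt} = \theta^{t} + \dt\, I(\eta_{\mu}, \eta_{\Sigma})^{-1}(\theta^{t}) \sum_{i=1}^{N} \hat{w}_{i} \dep{\ln P_{\theta}(x_{i})}{\theta},
\end{equation}
and keep in mind that the weights $\hat{w}_i$ are, by the definition preceding Definition \ref{IGO}, of the form $\hat{w}_i = \tfrac{1}{N} w(\cdot)$, so replacing $w$ by $\tfrac{1}{\lambda_1 \lambda_2} w$ simply multiplies each $\hat w_i$ by $\tfrac{1}{\lambda_1\lambda_2}$.

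Second, I would observe the key linearity facts about the twisted Fisher metric from Definition \ref{TwistMet}: both terms of $I(\eta_{\mu},\eta_{\Sigma})$ are built by dividing the two blocks of the ordinary Fisher metric by $\eta_{\mu}$ and $\eta_{\Sigma}$ respectively, so scaling both twisting parameters by a common factor $\lambda_1$ yields $I(\lambda_1\eta_{\mu}, \lambda_1\eta_{\Sigma}) = \tfrac{1}{\lambda_1} I(\eta_{\mu},\eta_{\Sigma})$, and hence $I(\lambda_1\eta_{\mu}, \lambda_1\eta_{\Sigma})^{-1} = \lambda_1\, I(\eta_{\mu},\eta_{\Sigma})^{-1}$. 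Substituting the three replacements ($\eta_\bullet \to \lambda_1\eta_\bullet$, $\dt \to \lambda_2\dt$, $w \to \tfrac{1}{\lambda_1\lambda_2}w$) into the update of the second algorithm, the three scalar factors are $\lambda_2$ (from the step size), $\lambda_1$ (from the inverse metric) and $\tfrac{1}{\lambda_1\lambda_2}$ (from the weights), whose product is $1$; thus the increment of the second algorithm equals the increment of the first, establishing the claim for twisted IGO.

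Third, for the geodesic IGO case I would argue identically: the twisted IGO speed $Y = I(\eta_{\mu},\eta_{\Sigma})^{-1}(\theta^{t}) \sum_i \hat w_i \dep{\ln P_{\theta}(x_i)}{\theta}$ scales by the same product of factors, but now only the combination $Y\dt$ enters the update $\theta^{t+\dt} = \exp_{\theta^{t}}(Y\dt)$. Under the three replacements, $Y$ is multiplied by $\lambda_1 \cdot \tfrac{1}{\lambda_1\lambda_2} = \tfrac{1}{\lambda_2}$ while $\dt$ is multiplied by $\lambda_2$, so the vector $Y\dt$ fed into the Riemannian exponential is unchanged, and the two geodesic updates coincide. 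The only genuinely substantive point — and the one I would state carefully rather than hide — is that the geodesic is computed with respect to the metric $I(\lambda_1\eta_{\mu},\lambda_1\eta_{\Sigma})$, which is a positive scalar multiple of $I(\eta_{\mu},\eta_{\Sigma})$; since a constant rescaling of a Riemannian metric leaves its geodesics (as point sets, with rescaled time parametrization) invariant, and the scaling of time is exactly absorbed into the $\lambda_2$ bookkeeping above, the exponential maps agree on the relevant input. This metric-rescaling remark is the main obstacle in the sense that it is the only step that is not pure symbol-pushing, but it is a standard fact and requires only that I note $Y\dt$ is the sole invariant quantity passed to $\exp$.
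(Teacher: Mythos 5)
Your proof is correct and matches the paper's reasoning: the paper gives no proof beyond the preceding remark that only the products $\dt\,\eta_{\mu}$ and $\dt\,\eta_{\Sigma}$ (together with the overall scale of $w$, hence of the $\hat w_i$) enter the update, and your computation of the three compensating factors $\lambda_{2}$, $\lambda_{1}$ and $\tfrac{1}{\lambda_{1}\lambda_{2}}$ is exactly that bookkeeping made explicit. One small precision on the geodesic case: rescaling the metric by the positive constant $\tfrac{1}{\lambda_{1}}$ leaves the Christoffel symbols, hence the geodesics \emph{as parametrized curves} and the exponential map itself, entirely unchanged, so there is no ``rescaled time parametrization'' to absorb --- $\exp_{\theta^{t}}$ is literally the same map for both metrics and your argument only needs the invariance of $Y\dt$.
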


\subsection{Algorithms after twisting}

We now give all the algorithms mentioned earlier, but with the twisted Fisher metric. Except for Proposition \ref{calcultgdtwist}, which is a simple calculation, the proofs of the following statements are an easy rewriting of their non-twisted counterparts: one can return to the non-twisted metric (up to a $\eta_{\Sigma}$ factor) by changing $\mu$ to $\frac{\sqrt{\eta_{\sigma}}}{\sqrt{\eta_{\mu}}}\mu$.

\begin{theo}
\label{tgdtwist}
If $\gamma \colon t \mapsto \mathcal{N} (\mu(t) ,\sigma (t)^{2} I)$ is a geodesic of $\tGd (\eta_{\mu}, \eta_{\sigma})$, then, there exists $a,b,c,d \in \R$ such that $ad-bc=1$ and $v > 0$ such that 

$\mu (t) = \mu (0) + \sqrt{ \frac{2d\eta_{\mu}}{\eta_{\sigma}} }\frac{ \dot{\mu}_{0}}{\Vert \dot{\mu}_{0} \Vert } \tilde{r} (t) $, $\sigma(t) = \Im (\gamma_{\C} (t) )$, with $\tilde{r} (t)=\Re (\gamma_{\C} (t) )$ and

\begin{equation} \gamma_{\C} (t):= \frac{aie^{vt}+b}{cie^{vt}+d}.\end{equation}
\end{theo}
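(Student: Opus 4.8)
The plan is to reduce the twisted case to the already-established untwisted geodesics of $\tGd$ (Theorem \ref{geodTgd}) by a single linear rescaling of the mean, exactly as announced in the remark preceding the statement. Throughout I work in the parametrization $(\mu, \sigma) \mapsto \mathcal{N}(\mu, \sigma^2 I)$.

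First I would compute the $(\eta_{\mu}, \eta_{\sigma})$-twisted Fisher metric of $\tGd(\eta_{\mu}, \eta_{\sigma})$ in this parametrization. By Definition \ref{TwistMet}, twisting simply divides the mean block of Equation \ref{tgdMetric} by $\eta_{\mu}$ and the $\sigma$ block by $\eta_{\sigma}$, so the metric is diagonal with entries $\frac{1}{\eta_{\mu} \sigma^2}$ (repeated $d$ times, for the components of $\mu$) and $\frac{2d}{\eta_{\sigma} \sigma^2}$ (for $\sigma$).

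Next I would introduce the change of variable $\mu' = \frac{\sqrt{\eta_{\sigma}}}{\sqrt{\eta_{\mu}}}\mu$. Since $\d{\mu} = \frac{\sqrt{\eta_{\mu}}}{\sqrt{\eta_{\sigma}}}\d{\mu'}$, the mean block becomes $\frac{1}{\eta_{\sigma} \sigma^2}$, so in the variables $(\mu', \sigma)$ the whole metric equals $\frac{1}{\eta_{\sigma}}$ times the untwisted metric of $\tGd$ from Equation \ref{tgdMetric}. A global positive constant does not change the geodesics, so in coordinates $(\mu', \sigma)$ the geodesics of $\tGd(\eta_{\mu}, \eta_{\sigma})$ are exactly those of $\tGd$. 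Theorem \ref{geodTgd} then gives $\mu'(t) = \mu'(0) + \sqrt{2d}\,\frac{\dot{\mu}'_0}{\Vert \dot{\mu}'_0 \Vert}\,\tilde{r}(t)$ and $\sigma(t) = \Im(\gamma_{\C}(t))$, with $\gamma_{\C}$ and $\tilde{r}$ as stated.

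Finally I would substitute back. Writing $\mu = \frac{\sqrt{\eta_{\mu}}}{\sqrt{\eta_{\sigma}}}\mu'$ and $\dot{\mu}'_0 = \frac{\sqrt{\eta_{\sigma}}}{\sqrt{\eta_{\mu}}}\dot{\mu}_0$, the normalized initial direction is unchanged, $\frac{\dot{\mu}'_0}{\Vert \dot{\mu}'_0 \Vert} = \frac{\dot{\mu}_0}{\Vert \dot{\mu}_0 \Vert}$, because the rescaling factor is a positive scalar. Multiplying the formula for $\mu'(t)$ by $\frac{\sqrt{\eta_{\mu}}}{\sqrt{\eta_{\sigma}}}$ then produces the coefficient $\frac{\sqrt{\eta_{\mu}}}{\sqrt{\eta_{\sigma}}}\sqrt{2d} = \sqrt{\frac{2d\eta_{\mu}}{\eta_{\sigma}}}$ in front of $\tilde{r}(t)$, which is precisely the claimed expression for $\mu(t)$; the formula for $\sigma(t)$ is unaffected by the rescaling. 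There is no genuine obstacle here: the only point requiring a little care is checking that the substitution trivializes the twisting with the exponents matching $\sqrt{2d\eta_{\mu}/\eta_{\sigma}}$ rather than some other power, which the explicit metric computation pins down unambiguously.
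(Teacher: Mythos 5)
Your proposal is correct and follows essentially the same route as the paper, which proves this theorem precisely by the substitution $\mu \mapsto \frac{\sqrt{\eta_{\sigma}}}{\sqrt{\eta_{\mu}}}\mu$, reducing the twisted metric to a constant multiple of the untwisted one and invoking Theorem \ref{geodTgd}. Your version just spells out the details (the explicit twisted metric, the invariance of geodesics under a constant rescaling of the metric, and the back-substitution producing the factor $\sqrt{2d\eta_{\mu}/\eta_{\sigma}}$) that the paper leaves as ``an easy rewriting.''
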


\begin{prop}
\label{calcultgdtwist}
Let $n\in \N$, $v_{\mu} \in \Rn$, $v_{\sigma}$, $\eta_{\mu}$, $\eta_{\sigma}$, $\sigma_{0} \in \R$, with $\sigma_{0}>0$.

Let
$v_{r}:=\Vert v_{\mu} \Vert $, $\lambda=\sqrt{\frac{2n\eta_{\mu}}{\eta_{\sigma}}}$ $v:=\sqrt{\frac{\frac{1}{\lambda^{2}}v_{r}^{2}+v_{\sigma}^{2}}{\sigma_{0}^{2}}}$, $M_{0}:=\frac{1}{\lambda}\frac{v_{r}}{v\sigma_{0}^{2}}$, and $S_{0}:=\frac{v_{\sigma}}{v\sigma_{0}^{2}}$.

Let $c:=\left( \frac{\sqrt{M_{0}^{2}+S_{0}^{2}}-S_{0}}{2}\right)^{\frac{1}{2}}$ and $d:=\left( \frac{\sqrt{M_{0}^{2}+S_{0}^{2}}+S_{0}}{2} \right)^{\frac{1}{2}}$.

Let $\gamma_{\mathbb{C}} (t):=\sigma_{0} \frac{die^{vt}-c}{cie^{vt}+d}$.

Then \begin{equation}\gamma : t  \mapsto \mathcal{N} \left( \mu_{0} + \lambda \frac{ v_{\mu}}{\Vert v_{\mu} \Vert } \Re (\gamma_{\C} (t) ), \Im (\gamma_{\C} (t) )\right)\end{equation}
is a geodesic of $\tilde{\mathbb{G}}_{n} (\eta_{\mu}, \eta_{\sigma} )$ satisfying $\gamma (0)=(\mu_{0},\sigma_{0})$, and $\dot{\gamma} (0)=(v_{\mu},v_{\sigma})$.
\end{prop}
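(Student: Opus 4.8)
The plan is to read Proposition \ref{calcultgdtwist} as the explicit inversion of the parametrization appearing in Theorem \ref{tgdtwist}: that theorem already guarantees that every geodesic of $\tGd(\eta_{\mu},\eta_{\sigma})$ is of the displayed form, so all that remains is to produce the coefficients $c,d,v$ realizing a prescribed initial position $(\mu_{0},\sigma_{0})$ and initial speed $(v_{\mu},v_{\sigma})$, and to check they work. I would therefore present the given $\gamma$ as a candidate and verify three things: that $t\mapsto(\Re\gamma_{\C},\Im\gamma_{\C})$ is a Poincaré half-plane geodesic, that $\gamma(0)=(\mu_{0},\sigma_{0})$, and that $\dot{\gamma}(0)=(v_{\mu},v_{\sigma})$. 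The whole problem decouples: the mean moves along the fixed line spanned by $v_{\mu}/\Vert v_{\mu}\Vert$, whose amplitude together with the scalar $\sigma$ traces out the planar curve $\gamma_{\C}$, so the computation is essentially one-dimensional in the half-plane $\mathcal{H}$ of Section \ref{PrelimHspace}.

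First I would check that $\gamma_{\C}$ is a genuine half-plane geodesic. Writing $\gamma_{\C}(t)=\frac{(\sigma_{0}d)\,ie^{vt}+(-\sigma_{0}c)}{c\,ie^{vt}+d}$ exhibits it in the Möbius form of Proposition \ref{poinca}; since the map is unchanged under a common rescaling of the four coefficients, one may normalize the determinant $\sigma_{0}(c^{2}+d^{2})$ to $1$, so $\gamma_{\C}$ is indeed of the required type with the given $v>0$. Evaluating at $t=0$ (where $ie^{vt}=i$), a one-line simplification gives $\gamma_{\C}(0)=\sigma_{0}i$, hence $\Re\gamma_{\C}(0)=0$ and $\Im\gamma_{\C}(0)=\sigma_{0}$; this yields $\mu(0)=\mu_{0}$ and $\sigma(0)=\sigma_{0}$ at once.

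The one real computation is the initial velocity. Differentiating the homographic expression and setting $t=0$ gives, after multiplying by the conjugate denominator and using $(d^{2}-c^{2})^{2}+(2cd)^{2}=(c^{2}+d^{2})^{2}$, the clean form $\dot{\gamma}_{\C}(0)=\frac{\sigma_{0}v}{c^{2}+d^{2}}\bigl(2cd+(d^{2}-c^{2})i\bigr)$. Here the definitions of $c$ and $d$ are built precisely so that $c^{2}+d^{2}=\sqrt{M_{0}^{2}+S_{0}^{2}}$, $d^{2}-c^{2}=S_{0}$ and $2cd=M_{0}$ (the last from $c^{2}d^{2}=M_{0}^{2}/4$, using $M_{0}\geq 0$). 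The decisive point is that the definition of $v$ forces $v^{2}\sigma_{0}^{2}=\tfrac{1}{\lambda^{2}}v_{r}^{2}+v_{\sigma}^{2}$, which is exactly the identity making $\sqrt{M_{0}^{2}+S_{0}^{2}}=1/\sigma_{0}$; substituting gives $\Re\dot{\gamma}_{\C}(0)=\sigma_{0}^{2}vM_{0}=v_{r}/\lambda$ and $\Im\dot{\gamma}_{\C}(0)=\sigma_{0}^{2}vS_{0}=v_{\sigma}$. Then $\dot{\mu}(0)=\lambda\frac{v_{\mu}}{\Vert v_{\mu}\Vert}\,\Re\dot{\gamma}_{\C}(0)=\lambda\frac{v_{\mu}}{\Vert v_{\mu}\Vert}\frac{v_{r}}{\lambda}=v_{\mu}$ and $\dot{\sigma}(0)=\Im\dot{\gamma}_{\C}(0)=v_{\sigma}$, as desired.

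I expect the only delicate points to be bookkeeping rather than conceptual: the differentiate-and-simplify step, whose main obstacle is verifying the single algebraic identity $\sqrt{M_{0}^{2}+S_{0}^{2}}=1/\sigma_{0}$ that the definition of $v$ is engineered to produce, and the degenerate case $v_{\mu}=0$, for which $v_{\mu}/\Vert v_{\mu}\Vert$ is undefined and the geodesic is the vertical line $\mu\equiv\mu_{0}$; this is handled by taking $M_{0}=0$, so $c=0$, and reading off the purely imaginary $\gamma_{\C}$ directly.
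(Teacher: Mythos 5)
Your verification is correct and follows exactly the route the paper intends: the paper states this proposition as the outcome of the ``tedious but easy computation'' of matching the coefficients $c,d,v$ of the M\"obius form from Theorem \ref{tgdtwist} to the prescribed initial data, and your calculation (in particular the key identity $\sqrt{M_{0}^{2}+S_{0}^{2}}=1/\sigma_{0}$, which also shows the determinant $\sigma_{0}(c^{2}+d^{2})$ is already exactly $1$) is precisely that computation. Nothing is missing.
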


\begin{theo} Let $\gamma : t \mapsto \mathcal{N} (\mu_{t} , \Sigma_{t})$ be a geodesic of $\G_{d} (\eta_{\mu}, \eta_{\Sigma} )$. Then, the following quantities are invariant:
\begin{equation}J_{\mu} = \frac{1}{\eta_{\mu}} \Sigma_{t}^{-1} \dot{\mu }_{t},\end{equation}
\begin{equation}J_{\Sigma} = \Sigma_{t}^{-1} (\frac{1}{\eta_{\mu}}  \dot{\mu}_{t} \trsp{ \mu }_{t}+ \frac{1}{\eta_{\Sigma}} \dot{\Sigma }_{t}). \end{equation}
\end{theo}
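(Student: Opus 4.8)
The plan is to mimic the proof of Theorem \ref{INoe} almost verbatim, the only change being that the Fisher Lagrangian is replaced by its twisted version and that one must recheck that the same one-parameter groups of diffeomorphisms are still symmetries of this new Lagrangian. By Definition \ref{TwistMet} (in the same way the ordinary Lagrangian was read off from Proposition \ref{FishG}), the Lagrangian associated with the geodesics of $\Gd(\eta_\mu,\eta_\Sigma)$ is $\mathcal{L}(\mu,\Sigma,\dot\mu,\dot\Sigma)=\frac{1}{\eta_\mu}\trsp{\dot\mu}\Sigma^{-1}\dot\mu+\frac{1}{\eta_\Sigma}\frac{1}{2}\tr(\dot\Sigma\Sigma^{-1}\dot\Sigma\Sigma^{-1})$, so its velocity-derivative is $\dep{\mathcal{L}}{\dot\theta}=\left[(h,H)\mapsto \frac{2}{\eta_\mu}\trsp{\dot\mu}\Sigma^{-1}h+\frac{1}{\eta_\Sigma}\tr(H\Sigma^{-1}\dot\Sigma\Sigma^{-1})\right]$. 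These expressions differ from the untwisted ones only by the constant prefactors $1/\eta_\mu$ and $1/\eta_\Sigma$.

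The key step I would carry out is the invariance check under the affine group $\phi_{\mu_0,A}:(\mu,\Sigma)\mapsto(A\mu+\mu_0,A\Sigma\trsp{A})$. Here the crucial observation is that $\phi_{\mu_0,A}$ maps the mean term of $\mathcal{L}$ to a mean term and the covariance term to a covariance term, with no coupling between the two; since each term was shown invariant in the proof of Theorem \ref{INoe}, multiplying them by the \emph{constants} $1/\eta_\mu$ and $1/\eta_\Sigma$ leaves the sum invariant. Hence the full affine group still acts by symmetries of the twisted Lagrangian.

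I would then apply Noether's theorem (Theorem \ref{NoeTh}) to the same two one-parameter subgroups as before. The mean translations $h_i^s$ give the conserved quantities $\frac{2}{\eta_\mu}\trsp{e_i}\Sigma^{-1}\dot\mu$ for each $i$, yielding invariance of $J_\mu=\frac{1}{\eta_\mu}\Sigma^{-1}\dot\mu$. The linear base changes $h_{i,j}^s$ give conserved quantities $J_{ij}=2(J_\mu\trsp{\mu})_{ij}+\frac{2}{\eta_\Sigma}(\Sigma^{-1}\dot\Sigma)_{ij}$, whose halves are exactly the entries of $J_\Sigma=\Sigma^{-1}\left(\frac{1}{\eta_\mu}\dot\mu\trsp{\mu}+\frac{1}{\eta_\Sigma}\dot\Sigma\right)$, as claimed.

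I expect no genuine obstacle beyond the bookkeeping: the single point that deserves attention is the invariance verification above, where one must be certain the constant twisting factors do not mix the mean and covariance contributions. As a shortcut that sidesteps even this, I could instead reduce to the untwisted case by the substitution $\mu\mapsto\sqrt{\eta_\Sigma/\eta_\mu}\,\mu$ mentioned before Theorem \ref{tgdtwist}: in the rescaled variable the twisted metric becomes $\eta_\Sigma^{-1}$ times the ordinary Fisher metric, so it has the same geodesics, Theorem \ref{INoe} applies directly, and undoing the substitution shows that $J_\mu$ and $J_\Sigma$ are constant multiples of the resulting conserved quantities and are therefore themselves invariant.
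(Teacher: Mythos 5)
Your proposal is correct and matches the paper's approach: the paper proves this theorem precisely by declaring it an "easy rewriting" of the non-twisted Theorem \ref{INoe}, obtainable either by redoing the Noether computation with the constant prefactors $1/\eta_{\mu}$, $1/\eta_{\Sigma}$ or by the substitution $\mu \mapsto \sqrt{\eta_{\Sigma}/\eta_{\mu}}\,\mu$, which are exactly your two routes. Both your detailed computation and your shortcut check out.
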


\begin{theo}
If $\mu: t\mapsto \mu_{t}$ and $\Sigma : t\mapsto \Sigma_{t}$ satisfy the equations

\begin{equation} \dot{\mu}_{t} = \eta_{\mu}\Sigma_{t} J_{\mu} \end{equation}
\begin{equation} \dot{\Sigma}_{t} = \eta_{\Sigma}\Sigma_{t} (J_{\Sigma} - J_{\mu} \trsp{ \mu }_{t} )= \eta_{\Sigma}\Sigma_{t} J_{\Sigma} - \frac{\eta_{\Sigma}}{\eta_{\mu}}\dot{\mu}_{t} \trsp{ \mu   }_{t},\end{equation}
where
$$J_{\mu} = \frac{1}{\eta_{\mu}}\Sigma_{0}^{-1} \dot{\mu}_{0},$$
and
$$J_{\Sigma} =  \Sigma_{0}^{-1} \left( \frac{1}{\eta_{\mu}}\dot{\mu}_{0} \trsp{ \mu_{0} }+ \frac{1}{\eta_{\Sigma}}\dot{\Sigma}_{0}   \right).$$
then, $t\mapsto \mathcal{N} (\mu_{t}, \Sigma_{t})$ is a geodesic of $\Gd (\eta_{\mu}, \eta_{\sigma})$.
\end{theo}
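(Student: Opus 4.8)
The plan is to reduce the twisted statement to the non-twisted Theorem \ref{Noe1} by the change of mean variable announced immediately before the statement, and then to track how the conserved quantities transform. I would proceed in three stages: rescale the mean so that the twisted metric becomes a constant multiple of the ordinary Fisher metric, invoke scale-invariance of geodesics, and finally propagate the scaling factors through $J_{\mu}$ and $J_{\Sigma}$.

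First I would introduce the rescaled mean $\nu := \sqrt{\eta_{\Sigma}/\eta_{\mu}}\,\mu$, so that $\partial \mu/\partial\theta_{j} = \sqrt{\eta_{\mu}/\eta_{\Sigma}}\;\partial\nu/\partial\theta_{j}$, and view a curve $t\mapsto\mathcal{N}(\mu_{t},\Sigma_{t})$ in $\Gd(\eta_{\mu},\eta_{\Sigma})$ as the curve $t\mapsto\mathcal{N}(\nu_{t},\Sigma_{t})$ in the coordinates $(\nu,\Sigma)$. Substituting this into the mean block of the twisted Fisher metric of Definition \ref{TwistMet} turns its prefactor $1/\eta_{\mu}$ into $1/\eta_{\Sigma}$, while the covariance block already carries $1/\eta_{\Sigma}$; hence the twisted metric equals $\eta_{\Sigma}^{-1}$ times the ordinary Fisher metric of Proposition \ref{FishG} written in the variables $(\nu,\Sigma)$.

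The key observation is that multiplying a Riemannian metric by a positive constant leaves the Christoffel symbols, hence the geodesic equation \eqref{geoEq}, unchanged. Therefore $t\mapsto\mathcal{N}(\nu_{t},\Sigma_{t})$ is a geodesic of $\Gd(\eta_{\mu},\eta_{\Sigma})$ if and only if it is a geodesic of $\Gd$ in the coordinates $(\nu,\Sigma)$. I would then apply Theorem \ref{Noe1} verbatim in these coordinates: the curve is a geodesic precisely when $\dot{\nu}_{t}=\Sigma_{t}\tilde{J}_{\mu}$ and $\dot{\Sigma}_{t}=\Sigma_{t}(\tilde{J}_{\Sigma}-\tilde{J}_{\mu}\trsp{\nu_{t}})$, with $\tilde{J}_{\mu}=\Sigma_{0}^{-1}\dot{\nu}_{0}$ and $\tilde{J}_{\Sigma}=\Sigma_{0}^{-1}(\dot{\nu}_{0}\trsp{\nu_{0}}+\dot{\Sigma}_{0})$.

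The last step is pure bookkeeping. Using $\dot{\nu}=\sqrt{\eta_{\Sigma}/\eta_{\mu}}\,\dot{\mu}$ one checks that $\tilde{J}_{\mu}=\sqrt{\eta_{\mu}\eta_{\Sigma}}\,J_{\mu}$ and $\tilde{J}_{\Sigma}=\eta_{\Sigma}J_{\Sigma}$ for the $J_{\mu},J_{\Sigma}$ defined in the statement (the factor $\eta_{\Sigma}/\eta_{\mu}$ appearing in $\dot{\nu}_{0}\trsp{\nu_{0}}$ is exactly what converts $\tilde{J}_{\Sigma}$ into $\eta_{\Sigma}J_{\Sigma}$). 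Substituting these into the two equations above and clearing the scalar factors yields precisely $\dot{\mu}_{t}=\eta_{\mu}\Sigma_{t}J_{\mu}$ and $\dot{\Sigma}_{t}=\eta_{\Sigma}\Sigma_{t}(J_{\Sigma}-J_{\mu}\trsp{\mu_{t}})$, which is the claimed implication (the reduction in fact gives an equivalence, but only this direction is asserted). There is no genuine difficulty; the only two points requiring care are the invariance of geodesics under a constant rescaling of the metric, and the consistent propagation of the factor $\sqrt{\eta_{\Sigma}/\eta_{\mu}}$ through the definitions of $\tilde{J}_{\mu}$ and $\tilde{J}_{\Sigma}$.
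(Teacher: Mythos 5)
Your proposal is correct and follows exactly the route the paper indicates for this theorem: the substitution $\mu \mapsto \sqrt{\eta_{\Sigma}/\eta_{\mu}}\,\mu$ turns the twisted metric into $\eta_{\Sigma}^{-1}$ times the ordinary Fisher metric, geodesics are unaffected by a constant rescaling, and Theorem \ref{Noe1} then transfers over with the stated factors on $J_{\mu}$ and $J_{\Sigma}$. The bookkeeping ($\tilde{J}_{\mu}=\sqrt{\eta_{\mu}\eta_{\Sigma}}\,J_{\mu}$, $\tilde{J}_{\Sigma}=\eta_{\Sigma}J_{\Sigma}$) checks out, so nothing further is needed.
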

\begin{theo}
If $\mu: t\mapsto \mu_{t}$ and $A: t\mapsto A_{t}$ satisfy the equations 
\begin{equation} \dot{\mu} = \eta_{\mu} A_{t}\trsp{ A}_{t} J_{\mu},\end{equation}
\begin{equation}\dot{A}_{t} = \frac{\eta_{\Sigma}}{2} \trsp{(J_{\Sigma } - J_{\mu}\trsp{ \mu }_{t})} A_{t},\end{equation}
where 
$$J_{\mu} = \frac{1}{\eta_{\mu}}\trsp{ (A_{0}^{-1})} A_{0}^{-1} \dot{\mu}_0 $$ 
and 
$$ J_{\Sigma} = \trsp{ (A_{0}^{-1})} A_{0}^{-1} (\frac{1}{\eta_{\mu}}\dot{\mu}_{0} \trsp{ \mu}_{0} + 
\frac{1}{\eta_{\Sigma}}\dot{A}_{0} \trsp{ A_{0}} +
\frac{1}{\eta_{\Sigma}} A_{0} \trsp{ \dot{A}}_{0} ),$$
then, $t\mapsto \mathcal{N} (\mu_{t}, A_{t}\trsp{ A_{t}})$ is a geodesic of $\Gd (\eta_{\mu}, \eta_{\sigma})$.
\end{theo}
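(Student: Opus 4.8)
The plan is to reduce the twisted statement to its non-twisted counterpart, Theorem \ref{Noe2}, by absorbing the two learning rates into a rescaling of the mean, exactly as announced in the remark preceding the statement. Concretely, I would introduce the new mean variable $\nu := \frac{\sqrt{\eta_{\Sigma}}}{\sqrt{\eta_{\mu}}}\,\mu$ and check that, in the coordinates $(\nu, \Sigma)$, the $(\eta_{\mu}, \eta_{\Sigma})$-twisted Fisher metric of Definition \ref{TwistMet} equals $\frac{1}{\eta_{\Sigma}}$ times the ordinary Fisher metric of Proposition \ref{FishG}. Indeed, writing $\beta := \sqrt{\eta_{\Sigma}/\eta_{\mu}}$ so that $\mu = \beta^{-1}\nu$, the mean part $\frac{1}{\eta_{\mu}}\trsp{(d\mu)}\Sigma^{-1}(d\mu)$ becomes $\frac{1}{\eta_{\mu}}\beta^{-2}\trsp{(d\nu)}\Sigma^{-1}(d\nu)=\frac{1}{\eta_{\Sigma}}\trsp{(d\nu)}\Sigma^{-1}(d\nu)$, while the covariance part already carries the factor $\frac{1}{\eta_{\Sigma}}$; hence the whole metric is $\frac{1}{\eta_{\Sigma}}$ times the Fisher metric in the $(\nu,\Sigma)$ chart.

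Next I would invoke the elementary fact that multiplying a Riemannian metric by a positive constant leaves its geodesics unchanged (the constant cancels in the Christoffel symbols of Equation \ref{geoEq}, or equivalently it rescales the length functional of Equation \ref{eqlength} by a constant without affecting its extremals). Consequently, a curve $t\mapsto \mathcal{N}(\mu_{t},\Sigma_{t})$ is a geodesic of $\Gd(\eta_{\mu},\eta_{\Sigma})$ if and only if $t\mapsto \mathcal{N}(\nu_{t},\Sigma_{t})$, with $\nu_{t}=\beta\mu_{t}$, is a geodesic of $\Gd$ for the ordinary Fisher metric. It then suffices to feed the curve $(\nu_{t}, A_{t})$, where $\Sigma_{t}=A_{t}\trsp{A_{t}}$ as before (the square root being untouched by the rescaling of the mean), into the non-twisted GIGO-A equations of Theorem \ref{Noe2}.

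The remaining work is pure bookkeeping: I would denote by $\hat{J}_{\mu},\hat{J}_{\Sigma}$ the non-twisted invariants of Theorem \ref{Noe2} for the system $(\nu, A)$ and express them through the twisted $J_{\mu},J_{\Sigma}$ of the statement. Using $\dot{\nu}_{0}=\beta\dot{\mu}_{0}$ and $\nu_{0}=\beta\mu_{0}$ one gets $\hat{J}_{\mu}=\trsp{(A_{0}^{-1})}A_{0}^{-1}\dot{\nu}_{0}=\sqrt{\eta_{\mu}\eta_{\Sigma}}\,J_{\mu}$ and $\hat{J}_{\Sigma}=\eta_{\Sigma}J_{\Sigma}$, whence $\beta\hat{J}_{\mu}=\eta_{\Sigma}J_{\mu}$ and $\hat{J}_{\Sigma}-\hat{J}_{\mu}\trsp{\nu}_{t}=\eta_{\Sigma}(J_{\Sigma}-J_{\mu}\trsp{\mu}_{t})$. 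Substituting these into $\dot{\nu}_{t}=A_{t}\trsp{A}_{t}\hat{J}_{\mu}$ and $\dot{A}_{t}=\frac12\trsp{(\hat{J}_{\Sigma}-\hat{J}_{\mu}\trsp{\nu}_{t})}A_{t}$, and dividing the first line by $\beta$, recovers exactly $\dot{\mu}_{t}=\eta_{\mu}A_{t}\trsp{A}_{t}J_{\mu}$ and $\dot{A}_{t}=\frac{\eta_{\Sigma}}{2}\trsp{(J_{\Sigma}-J_{\mu}\trsp{\mu}_{t})}A_{t}$, which is the claim. The only genuine point to be careful about — and the one I would isolate as a short lemma — is the geodesic-invariance under constant rescaling of the metric; everything else is a matter of tracking the two factors $\sqrt{\eta_{\mu}/\eta_{\Sigma}}$ and $\sqrt{\eta_{\mu}\eta_{\Sigma}}$ through the formulas.
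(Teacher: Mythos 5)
Your proposal is correct and follows exactly the route the paper itself indicates: the paper's stated proof is precisely that ``one can return to the non-twisted metric (up to a $\eta_{\Sigma}$ factor) by changing $\mu$ to $\frac{\sqrt{\eta_{\Sigma}}}{\sqrt{\eta_{\mu}}}\mu$'' and then invoking the non-twisted Theorem \ref{Noe2}. Your bookkeeping of the invariants ($\hat{J}_{\mu}=\sqrt{\eta_{\mu}\eta_{\Sigma}}\,J_{\mu}$, $\hat{J}_{\Sigma}=\eta_{\Sigma}J_{\Sigma}$) checks out, so you have simply made explicit the details the paper leaves to the reader.
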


Notice that the equations found by twisting the metric are exactly the equations without twisting, except that we have ``forced" the learning rates $\eta_{\mu}$, $\eta_{\Sigma}$ to appear by multiplying the increments of $\mu$ and $\Sigma$ by $\eta_{\mu}$ and $\eta_{\Sigma}$. 

\begin{prop}[xNES as IGO]
\label{xNESIGOt}
The xNES algorithm with sample size $N$, weights $w_{i}$, and learning rates $\eta_{\mu}, \eta_{\sigma}=\eta_{B}=\eta_{\Sigma}$ coincides with the $\frac{\eta_{\mu}}{\dt}, \frac{\eta_{\Sigma}}{\dt}$-twisted IGO algorithm with sample size $N$, weights $w_{i}$, step size $\dt$, and in which, given the current position $(\mu_{t}, A_{t})$, the set of Gaussians is parametrized by
$$(\delta, M) \mapsto  \mathcal{N} \left(   \mu_{t} + A_{t} \delta , \left( A_{t} \exp (\frac{1}{2} M ) \right) \trsp{ \left(  A_{t} \exp (\frac{1}{2} M ) \right)} \right),$$
with $\delta \in \R^{m}$ and $M \in \mathrm{Sym} (\R^{m})$.

The parameters maintained by the algorithm are $(\mu , A )$, and the $x_{i}$ are sampled from $\mathcal{N} (\mu , A\trsp{A}).$
\end{prop}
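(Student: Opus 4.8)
The plan is to repeat the computation in the proof of Proposition~\ref{xNESIGO} almost verbatim, replacing the Fisher metric by the twisted Fisher metric of Definition~\ref{TwistMet} with parameters $(\eta_{\mu}/\dt, \eta_{\Sigma}/\dt)$. The only thing that changes between the two statements is the inverse-metric factor standing in front of the natural gradient, so essentially all the work reduces to evaluating the twisted Fisher matrix at the current point in the parametrization $\phi_{\mu_t, A_t}$.

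First I would record, exactly as in Proposition~\ref{xNESIGO}, that in this parametrization the current point is $(\delta, M)^{t} = (0,0)$, and reuse the two gradient computations given there, namely $Y_{\delta} = \sum_i \hat{w}_i \dep{\ln P_{\theta}(x_i)}{\delta} = G_{\mu}$ and $Y_{M} = \sum_i \hat{w}_i \dep{\ln P_{\theta}(x_i)}{M} = G_{M}$ evaluated at $(0,0)$. Next I would observe that the untwisted Fisher matrix at $(0,0)$ is not merely the identity but is block diagonal in the $(\delta, M)$ splitting: since $\mu = \mu_t + A_t \delta$ does not depend on $M$ and $\Sigma = A_t \exp(M) \trsp{A_t}$ does not depend on $\delta$, the mean term of Proposition~\ref{FishG} only sees $\delta$-derivatives and the covariance term only sees $M$-derivatives, so the cross-block vanishes. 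Because the twisting of Definition~\ref{TwistMet} multiplies precisely the mean term by $\dt/\eta_{\mu}$ and the covariance term by $\dt/\eta_{\Sigma}$, the twisted Fisher matrix at $(0,0)$ is $\mathrm{diag}(\tfrac{\dt}{\eta_{\mu}} I, \tfrac{\dt}{\eta_{\Sigma}} I)$, with inverse $\mathrm{diag}(\tfrac{\eta_{\mu}}{\dt} I, \tfrac{\eta_{\Sigma}}{\dt} I)$.

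Plugging this into the twisted IGO update $\theta^{t+\dt} = \theta^{t} + \dt\, I(\eta_{\mu}/\dt, \eta_{\Sigma}/\dt)^{-1} \sum_i \hat{w}_i \dep{\ln P_{\theta}(x_i)}{\theta}$, the step size $\dt$ cancels against the $1/\dt$ inside the inverse metric, leaving $\delta^{t+\dt} = \eta_{\mu} G_{\mu}$ and $M^{t+\dt} = \eta_{\Sigma} G_{M}$. Translating back through $\phi_{\mu_t, A_t}$ then gives $\mu^{t+\dt} = \mu_t + \eta_{\mu} A_t G_{\mu}$ and $A^{t+\dt} = A_t \exp(\eta_{\Sigma} G_{M}/2)$, which is exactly the xNES update of Definition~\ref{xN} with the stated learning rates $\eta_{\mu}$ and $\eta_{\sigma}=\eta_{B}=\eta_{\Sigma}$.

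The main obstacle is the bookkeeping in the block-diagonality step: one must verify both that the cross-block of the Fisher matrix genuinely vanishes in this parametrization and that the two twisting factors attach to the correct blocks, since this is the only place where $\eta_{\mu}$ and $\eta_{\Sigma}$ enter asymmetrically. Everything else is a mechanical rewriting of Proposition~\ref{xNESIGO}. It is worth noting that Proposition~\ref{obvrem} does not short-circuit the argument when $\eta_{\mu} \neq \eta_{\Sigma}$, since it only rescales both twisting parameters by a common factor; the genuine content here is the clean cancellation of $\dt$, which is precisely why the twisting parameters must be taken to be $\eta_{\mu}/\dt$ and $\eta_{\Sigma}/\dt$.
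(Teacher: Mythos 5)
Your proof is correct. The paper does not actually write out a proof of Proposition \ref{xNESIGOt}: it disposes of the whole block of twisted statements with the one-line remark that each is ``an easy rewriting of its non-twisted counterpart'', obtained by rescaling the mean via $\mu \mapsto \sqrt{\eta_{\sigma}/\eta_{\mu}}\,\mu$, which turns the $(\eta_{\mu},\eta_{\Sigma})$-twisted metric into the ordinary Fisher metric up to an overall factor of $\eta_{\Sigma}$. Your route is different in that you never invoke this isometry: you redo the computation of Proposition \ref{xNESIGO} directly in the $(\delta,M)$ chart, observe that the Fisher matrix there is block diagonal (the mean term only sees $\delta$ and the covariance term only sees $M$), so that Definition \ref{TwistMet} with parameters $(\eta_{\mu}/\dt,\eta_{\Sigma}/\dt)$ yields the twisted Fisher matrix $\mathrm{diag}(\tfrac{\dt}{\eta_{\mu}}I,\tfrac{\dt}{\eta_{\Sigma}}I)$, and let the $\dt$ cancel against the step size. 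Both arguments are mechanical rewritings of Proposition \ref{xNESIGO}; yours has the advantage of being self-contained and of making explicit the one genuinely delicate point (that the two twisting factors attach to the correct blocks, which is exactly where $\eta_{\mu}\neq\eta_{\Sigma}$ matters), whereas the paper's rescaling trick is more economical when one wants to transport all the geodesic equations at once rather than just this update rule. Your closing observation that Proposition \ref{obvrem} cannot replace the argument when $\eta_{\mu}\neq\eta_{\Sigma}$ is accurate.
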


\begin{prop}[pure rank-$\mu$ CMA-ES as IGO]
The pure rank-$\mu$ CMA-ES algorithm with sample size $N$, weights $w_{i}$, and learning rates $\eta_{\mu}$ and $\eta_{\Sigma}$ coincides with the $( \frac{\eta_{\mu}}{\dt}, \frac{\eta_{\Sigma}}{\dt} )$-twisted IGO algorithm with sample size $N$, weights $w_{i}$, step size $\dt$, and the parametrization $(\mu , \Sigma)$.
\end{prop}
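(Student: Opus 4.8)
The plan is to compute the $(\eta_{\mu}/\dt,\eta_{\Sigma}/\dt)$-twisted IGO update directly in the $(\mu,\Sigma)$ parametrization and to check, component by component, that it reproduces the two CMA-ES update lines. The starting observation is that, by Proposition \ref{FishG}, the Fisher metric in the parametrization $(\mu,\Sigma)$ is block diagonal: the cross terms between a $\mu$-coordinate and a $\Sigma$-coordinate vanish because $\mu$ does not depend on the entries of $\Sigma$ and conversely, so the metric splits into a $\mu$-block equal to $\Sigma^{-1}$ and a $\Sigma$-block given by the quadratic form $H\mapsto \frac{1}{2}\tr(\Sigma^{-1}H\Sigma^{-1}H)$ on symmetric matrices. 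Twisting only rescales each block (the $\mu$-block by $\dt/\eta_{\mu}$, the $\Sigma$-block by $\dt/\eta_{\Sigma}$), so it stays block diagonal and its inverse acts separately on the two components, multiplying the $\mu$-part of the ordinary gradient by $(\eta_{\mu}/\dt)\Sigma$ and the $\Sigma$-part by $\eta_{\Sigma}/\dt$ times the inverse of the untwisted $\Sigma$-block.

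For the mean I would first record that $\frac{\partial \ln P_{\mu,\Sigma}(x)}{\partial\mu}=\Sigma^{-1}(x-\mu)$. Applying the inverse $\mu$-block gives the twisted IGO speed $Y_{\mu}=\frac{\eta_{\mu}}{\dt}\Sigma\sum_{i}\hat w_{i}\,\Sigma^{-1}(x_{i}-\mu)=\frac{\eta_{\mu}}{\dt}\sum_{i}\hat w_{i}(x_{i}-\mu)$, and multiplying by the step size $\dt$ yields $\mu^{t+\dt}=\mu^{t}+\eta_{\mu}\sum_{i}\hat w_{i}(x_{i}-\mu)$, which is exactly the CMA-ES mean update (with the same convention identifying $\hat w_{i}$ with $w_{i}$ used in the xNES computation of Proposition \ref{xNESIGO}).

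The only real work is the covariance block. One computes $\frac{\partial \ln P_{\mu,\Sigma}(x)}{\partial\Sigma}=\frac{1}{2}\left(\Sigma^{-1}(x-\mu)(x-\mu)^{T}\Sigma^{-1}-\Sigma^{-1}\right)$, and then inverts the Fisher $\Sigma$-block: the natural gradient is the symmetric matrix $V$ such that $\frac{1}{2}\tr(\Sigma^{-1}V\Sigma^{-1}H)=\tr\!\big(\tfrac{\partial \ln P}{\partial\Sigma}H\big)$ for every symmetric $H$. Since both sides are linear in $H$ and the pairing is nondegenerate on symmetric matrices, this forces $\frac{1}{2}\Sigma^{-1}V\Sigma^{-1}=\frac{\partial \ln P}{\partial\Sigma}$, hence $V=2\Sigma\,\frac{\partial \ln P}{\partial\Sigma}\,\Sigma=(x-\mu)(x-\mu)^{T}-\Sigma$. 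The subtle points to handle carefully are that the pairing must be taken on the space of symmetric matrices (so that $V$ is itself symmetric, i.e. a genuine tangent vector to $\Gd$), and that the factor $\tfrac{1}{2}$ in the metric is exactly what produces the factor $2$ turning $\frac{\partial \ln P}{\partial\Sigma}$ into $(x-\mu)(x-\mu)^{T}-\Sigma$. Summing against the weights and applying the twisting gives $Y_{\Sigma}=\frac{\eta_{\Sigma}}{\dt}\sum_{i}\hat w_{i}\big((x_{i}-\mu)(x_{i}-\mu)^{T}-\Sigma\big)$.

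Finally I would assemble the two pieces: multiplying $Y_{\mu}$ and $Y_{\Sigma}$ by the step size $\dt$ cancels the $1/\dt$ introduced by the twisting, leaving increments $\eta_{\mu}\sum_{i}\hat w_{i}(x_{i}-\mu)$ and $\eta_{\Sigma}\sum_{i}\hat w_{i}\big((x_{i}-\mu)(x_{i}-\mu)^{T}-\Sigma\big)$, which are precisely the pure rank-$\mu$ CMA-ES updates. This cancellation is the same mechanism recorded in Proposition \ref{obvrem} and in the remark following the twisted theorems, so the whole argument is a rescaled copy of the untwisted identification; the computation of the covariance natural gradient described above is the one genuinely non-formal step, and the expected main obstacle.
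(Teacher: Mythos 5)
Your computation is correct, and it is worth noting that the paper itself does not actually prove this proposition: the non-twisted version is delegated to the reference \cite{CMAIGO}, and the twisted version is covered only by the blanket remark that the twisted statements are ``an easy rewriting'' of their non-twisted counterparts. Your argument fills in exactly what that citation hides, and it does so by the same method the paper uses for the analogous xNES statement (Proposition \ref{xNESIGO}): fix the parametrization, observe from Proposition \ref{FishG} that the (twisted) Fisher metric is block diagonal in $(\mu,\Sigma)$, invert each block, and match the resulting natural-gradient step to the algorithm's update lines. All the individual steps check out: the $\mu$-block inverse $\frac{\eta_{\mu}}{\dt}\Sigma$ applied to $\sum_i \hat w_i \Sigma^{-1}(x_i-\mu)$ gives the CMA mean increment after multiplication by $\dt$; and your treatment of the $\Sigma$-block via the nondegenerate pairing $(V,H)\mapsto \frac{1}{2}\tr(\Sigma^{-1}V\Sigma^{-1}H)$ on symmetric matrices correctly produces $V=(x-\mu)\trsp{(x-\mu)}-\Sigma$, sidestepping the usual factor-of-two ambiguities in differentiating with respect to a symmetric matrix. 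The cancellation of the $1/\dt$ from the twisting against the step size $\dt$ is indeed the mechanism of Proposition \ref{obvrem}. So the proposal is a correct, self-contained proof of a statement the paper only asserts by reference.
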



\begin{theo}
Let $\eta_{\mu}, \eta_\Sigma\in \R$, $\mu_0 \in \Rd$, 
$A_0 \in GL_{d} (\R)$, and $(\dot{\mu}_{0}, \dot{\Sigma}_{0})\in T_{\mathcal{N}(\mu_{0},A_{0}\trsp{A_{0}})}\Gd$. Let \begin{equation} \deff{h}{\Gd}{\Gd}{\mathcal{N} (\mu,\Sigma)}{\mathcal{N} (\sqrt{\frac{\eta_\mu}{\eta_\Sigma}}\mu,\Sigma)}.\end{equation}
We denote by $\phi$ (resp. $\psi$) the Riemannian exponential of $\Gd$ (resp. $\Gd (\eta_\mu,\eta_\Sigma )$) at $\mathcal{N} (\sqrt{\frac{\eta_\mu}{\eta_\Sigma}}\mu_{0}, A_0 \trsp{A_0})$ (resp. $\mathcal{N} (\mu_{0}, A_0 \trsp{A_0})$). We have:
\begin{equation}
\psi ( \dot{\mu}_0, \dot{\Sigma}_0 )=h  \circ \phi  (\sqrt{\frac{\eta_\Sigma}{\eta_\mu}}\dot{\mu}_0, \dot{\Sigma}_0 )
\end{equation}
\end{theo}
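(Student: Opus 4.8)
The plan is to recognise $h$ as a \emph{homothety} between the two manifolds and to invoke the fact that a homothety intertwines the Riemannian exponential maps. Write a tangent vector to $\Gd$ at $\mathcal{N}(\mu,\Sigma)$ as a pair $(v_{\mu},v_{\Sigma})$. By Proposition \ref{FishG} the Fisher metric is the quadratic form $g^{F}(v,v)=\trsp{v_{\mu}}\Sigma^{-1}v_{\mu}+\frac{1}{2}\tr(\Sigma^{-1}v_{\Sigma}\Sigma^{-1}v_{\Sigma})$, whereas by Definition \ref{TwistMet} the $(\eta_{\mu},\eta_{\Sigma})$-twisted metric is $g^{\mathrm{tw}}(v,v)=\frac{1}{\eta_{\mu}}\trsp{v_{\mu}}\Sigma^{-1}v_{\mu}+\frac{1}{\eta_{\Sigma}}\frac{1}{2}\tr(\Sigma^{-1}v_{\Sigma}\Sigma^{-1}v_{\Sigma})$. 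These differ only in how they weight the mean block against the covariance block, and $h$ acts by rescaling exactly the mean.

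First I would compute the differential of $h$. Since $h(\mu,\Sigma)=(\sqrt{\eta_{\mu}/\eta_{\Sigma}}\,\mu,\Sigma)$ is linear in $\mu$ and fixes $\Sigma$, we have $dh_{(\mu,\Sigma)}(v_{\mu},v_{\Sigma})=(\sqrt{\eta_{\mu}/\eta_{\Sigma}}\,v_{\mu},v_{\Sigma})$, and $h$ leaves the covariance coordinate, hence the $\Sigma^{-1}$ common to both metrics, untouched. Pulling $g^{\mathrm{tw}}$ back through $h$, the factor $\eta_{\mu}/\eta_{\Sigma}$ produced by the mean part of $dh$ cancels the $1/\eta_{\mu}$ in the twisted mean term, so the mean block acquires weight $1/\eta_{\Sigma}$ and thus matches the covariance block. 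This gives the single key identity $h^{*}g^{\mathrm{tw}}=\frac{1}{\eta_{\Sigma}}g^{F}$.

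Next I would invoke two standard facts. Scaling a metric by a positive constant leaves its Levi-Civita connection, and hence its geodesics and exponential map, unchanged, so $\exp^{\frac{1}{\eta_{\Sigma}}g^{F}}=\exp^{g^{F}}$ at every base point; in particular $\phi$ is simultaneously the $g^{F}$- and the $\frac{1}{\eta_{\Sigma}}g^{F}$-exponential at its base point. The identity of the previous paragraph then states exactly that $h$ is an isometry from $(\Gd,\frac{1}{\eta_{\Sigma}}g^{F})$ onto $(\Gd(\eta_{\mu},\eta_{\Sigma}),g^{\mathrm{tw}})$, and isometries commute with the exponential; hence $h\big(\exp^{g^{F}}_{p}(v)\big)=\psi_{h(p)}\big(dh_{p}(v)\big)$ for every base point $p$ and every tangent vector $v$, where $\psi_{q}$ denotes the twisted exponential based at $q$.

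Finally I would match the data of the statement by requiring that both sides start at the base point of $\psi$ and carry its initial velocity. Solving $h(p)=\mathcal{N}(\mu_{0},A_{0}\trsp{A_{0}})$ fixes the base point $p$ at which $\phi$ must be taken, and solving $dh_{p}(v)=(\dot{\mu}_{0},\dot{\Sigma}_{0})$ gives $v=(\sqrt{\eta_{\Sigma}/\eta_{\mu}}\,\dot{\mu}_{0},\dot{\Sigma}_{0})$, because $dh_{p}$ multiplies the mean component by $\sqrt{\eta_{\mu}/\eta_{\Sigma}}$. Substituting into the intertwining identity gives $\psi(\dot{\mu}_{0},\dot{\Sigma}_{0})=h\circ\phi(\sqrt{\eta_{\Sigma}/\eta_{\mu}}\,\dot{\mu}_{0},\dot{\Sigma}_{0})$, which is the claim. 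I expect the only genuinely computational step, and the place that needs care, to be the pullback identity $h^{*}g^{\mathrm{tw}}=\frac{1}{\eta_{\Sigma}}g^{F}$: once the constant is seen to come out uniform across the mean and covariance blocks, the principle that a homothety commutes with the exponential supplies everything else, and all that remains is to keep track of the direction in which $h$ and $dh_{p}$ rescale the mean.
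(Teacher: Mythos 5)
Your argument is correct and is precisely the one the paper relies on: the paper's only stated justification for this theorem is the earlier remark that the substitution $\mu\mapsto\sqrt{\eta_{\Sigma}/\eta_{\mu}}\,\mu$ returns the twisted metric to the Fisher metric up to an $\eta_{\Sigma}$ factor, which is exactly your pullback identity $h^{*}g^{\mathrm{tw}}=\frac{1}{\eta_{\Sigma}}g^{F}$ combined with the observation that a constant rescaling of the metric does not change geodesics or exponential maps. One remark: solving $h(p)=\mathcal{N}(\mu_{0},\Sigma_{0})$ as you do yields $p=\mathcal{N}\bigl(\sqrt{\eta_{\Sigma}/\eta_{\mu}}\,\mu_{0},\Sigma_{0}\bigr)$ (writing $\Sigma_{0}=A_{0}A_{0}^{T}$), whereas the statement places $\phi$ at $\mathcal{N}\bigl(\sqrt{\eta_{\mu}/\eta_{\Sigma}}\,\mu_{0},\Sigma_{0}\bigr)$; your computation silently corrects what is evidently a typo in the printed base point, since as written the claimed identity already fails at $\dot{\mu}_{0}=\dot{\Sigma}_{0}=0$ unless $\eta_{\mu}=\eta_{\Sigma}$ or $\mu_{0}=0$.
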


\subsection{Trajectories of different IGO steps}
\label{SecTj}
As we have seen, two different IGO algorithms (or an IGO algorithm and the GIGO algorithm) coincide at first order in $\dt$ when $\dt\rightarrow 0$. In this section, we study the differences between pure rank-$\mu$ CMA-ES, xNES, and GIGO by looking at the second order in $\dt$, and in particular, we show that xNES and GIGO do not coincide in the general case.

We view the updates done by one step of the algorithms as paths on the manifold $\Gd$, from $(\mu (t), \Sigma (t))$ to $\left( \mu(t+\dt), \Sigma (t+\dt) \right)$, where $\dt$ is the time step of our algorithms, seen as IGO algorithms. More formally:

\begin{defi}
\label{tjmaj}
\begin{enumerate}

\item We call GIGO update trajectory the application
$$T_{\mathrm{GIGO}} \colon (\mu , \Sigma , v_{\mu}, v_{\Sigma} ) \mapsto \left( \dt \mapsto 
\exp_{\mathcal{N}(\mu, A\trsp{ A })}
 (\dt \eta_{\mu} v_{\mu} , \dt \eta_{\Sigma} v_{\Sigma}   ) \right).$$
($\exp$ is the exponential of the Riemannian manifold $\Gd (\eta_{\mu} ,\eta_{\Sigma} )$)
\item We call xNES update trajectory the application
$$T_{\mathrm{xNES}} \colon (\mu , \Sigma , v_{\mu}, v_{\Sigma} ) \mapsto \left( 
\dt \mapsto 
\mathcal{N}(\mu + \dt \eta_{\mu}  v_{\mu}, A \exp [\eta_{\Sigma}  \dt A^{-1}  v_{\Sigma} \trsp{(A^{-1})} ] \trsp{ A })\right), $$
with $A\trsp{A}=\Sigma$. The application above does not depend on the choice of a square root $A$.

\item We call CMA update trajectory the application 
$$T_{\mathrm{CMA}} \colon (\mu , \Sigma , v_{\mu}, v_{\Sigma} ) \mapsto \left( \dt \mapsto 
\mathcal{N}(\mu +\dt \eta_{\mu}  v_{\mu} ,A\trsp{ A} +  \dt \eta_{\Sigma}  v_{\Sigma}) \right).  $$
\end{enumerate}

These appliations map the set of tangent vectors to $\Gd$ ($T\Gd $) to the curves in $\Gd (\eta_{\mu}, \eta_{\Sigma})$.

We will also use the following notation:
$\mu_{\mathrm{GIGO}}:= \phi_{\mu} \circ T_{\text{GIGO}}$,
 $\mu_{\mathrm{xNES}}:= \phi_{\mu} \circ T_{\text{xNES}}$,
$ \mu_{\mathrm{CMA}}:= \phi_{\mu} \circ T_{\text{CMA}}$,
 $\Sigma_{\mathrm{GIGO}}:= \phi_{\Sigma} \circ T_{\text{GIGO}}$,
$ \Sigma_{\mathrm{xNES}}:= \phi_{\Sigma} \circ T_{\text{xNES}}$,  
 and $\Sigma_{\mathrm{CMA}}:= \phi_{\Sigma} \circ T_{\text{CMA}}$, 
 where $\phi_{\mu}$ (resp. $\phi_{\Sigma}$) extracts the $\mu$-component (resp. the $\Sigma$-component) of a curve.


\end{defi}

For instance, $T_{\mathrm{GIGO}} (\mu , \Sigma, v_{\mu} , v_{\Sigma} ) (\dt)$ gives the position (mean and covariance matrix) of the GIGO algorithm after a step of size $\dt$, while $\mu_{\text{GIGO}}$ and $\Sigma_{\text{GIGO}}$ give respectively the mean component and the covariance component of this position.

This formulation ensures that the trajectories we are comparing had the same initial position and the same initial speed, which is the case provided the sampled values\footnote{These ``sampled values" are values \emph{directly} sampled from $\mathcal{N}(\mu, \Sigma )$, \emph{not} from $\mathcal{N}(0, I)$ and transformed.} are the same.

Different IGO algorithms coincide at first order in $\dt$. The following proposition gives the second order expansion of the trajectories of the algorithms.

\begin{prop}[second derivatives of the trajectories]
\label{TjDeri}We have:
$$ \mu_{\mathrm{GIGO}}(\mu , \Sigma , v_{\mu}, v_{\Sigma} )''(0) = \eta_{\mu} \eta_{\Sigma} v_{\Sigma }\Sigma_{0}^{-1} v_{\mu}, $$

$$ \mu_{\mathrm{xNES}}(\mu , \Sigma , v_{\mu}, v_{\Sigma} )''(0) = \mu_{\mathrm{CMA}}(\mu , \Sigma , v_{\mu}, v_{\Sigma} )''(0)=0,$$

$$ \Sigma_{\mathrm{GIGO}}(\mu , \Sigma , v_{\mu}, v_{\Sigma} )''(0)=  \eta_{\Sigma}^{2}v_{\Sigma} \Sigma^{-1} v_{\Sigma} - \eta_{\mu} \eta_{\Sigma} v_{\mu} \trsp{v_{\mu}}, $$

$$ \Sigma_{\mathrm{xNES}}(\mu , \Sigma , v_{\mu}, v_{\Sigma} )''(0)
= \eta_{\Sigma}^{2}v_{\Sigma} \Sigma^{-1} v_{\Sigma}, $$

$$ \Sigma_{\mathrm{CMA}}(\mu , \Sigma , v_{\mu}, v_{\Sigma} )''(0)=0. $$

\end{prop}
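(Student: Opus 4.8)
The plan is to compute the second-order Taylor expansion of each of the three update trajectories at $\dt=0$, extracting the coefficient of $\dt^{2}/2$ (equivalently, differentiating twice and evaluating at $0$). For each algorithm the first derivative at $0$ is the common IGO speed $(\eta_{\mu}v_{\mu},\eta_{\Sigma}v_{\Sigma})$, so the whole content is in the second derivative. I would treat the three trajectories separately, starting with the two explicit ones (xNES and CMA) and finishing with GIGO, which requires the geodesic equations.

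For CMA the trajectory is affine in $\dt$ in both components, namely $\mu+\dt\eta_{\mu}v_{\mu}$ and $\Sigma+\dt\eta_{\Sigma}v_{\Sigma}$, so differentiating twice gives $0$ in both components immediately. For xNES, the mean component $\mu+\dt\eta_{\mu}v_{\mu}$ is again affine, giving $\mu_{\mathrm{xNES}}''(0)=0$; the covariance component is $A\exp(\eta_{\Sigma}\dt\,A^{-1}v_{\Sigma}\trsp{(A^{-1})})\trsp{A}$, and I would expand the matrix exponential as $I+\eta_{\Sigma}\dt\,A^{-1}v_{\Sigma}\trsp{(A^{-1})}+\tfrac{1}{2}\eta_{\Sigma}^{2}\dt^{2}(A^{-1}v_{\Sigma}\trsp{(A^{-1})})^{2}+o(\dt^{2})$. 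Multiplying on the left by $A$ and on the right by $\trsp{A}$ and reading off the $\dt^{2}$ term, the square telescopes using $\trsp{(A^{-1})}A^{-1}=\Sigma^{-1}$, yielding $\eta_{\Sigma}^{2}v_{\Sigma}\Sigma^{-1}v_{\Sigma}$, as claimed.

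The GIGO trajectory is the one requiring real work, and this is where I would use the first-order geodesic equations of Theorem \ref{Noe1} (with the twisted versions, which only insert factors $\eta_{\mu}$, $\eta_{\Sigma}$). The geodesic satisfies $\dot{\mu}_{t}=\eta_{\mu}\Sigma_{t}J_{\mu}$ and $\dot{\Sigma}_{t}=\eta_{\Sigma}\Sigma_{t}(J_{\Sigma}-J_{\mu}\trsp{\mu}_{t})$, with initial data $\dot{\mu}_{0}=\eta_{\mu}v_{\mu}$, $\dot{\Sigma}_{0}=\eta_{\Sigma}v_{\Sigma}$, and $J_{\mu}=\tfrac{1}{\eta_{\mu}}\Sigma_{0}^{-1}\dot{\mu}_{0}=\Sigma_{0}^{-1}v_{\mu}$. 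The idea is to differentiate these first-order equations once more and evaluate at $t=0$. For the mean, $\ddot{\mu}_{0}=\eta_{\mu}\dot{\Sigma}_{0}J_{\mu}=\eta_{\mu}(\eta_{\Sigma}v_{\Sigma})(\Sigma_{0}^{-1}v_{\mu})=\eta_{\mu}\eta_{\Sigma}v_{\Sigma}\Sigma_{0}^{-1}v_{\mu}$, giving the first formula. For the covariance, I would differentiate $\dot{\Sigma}_{t}=\eta_{\Sigma}\Sigma_{t}J_{\Sigma}-\tfrac{\eta_{\Sigma}}{\eta_{\mu}}\dot{\mu}_{t}\trsp{\mu}_{t}$, obtaining $\ddot{\Sigma}_{0}=\eta_{\Sigma}\dot{\Sigma}_{0}J_{\Sigma}-\tfrac{\eta_{\Sigma}}{\eta_{\mu}}(\ddot{\mu}_{0}\trsp{\mu}_{0}+\dot{\mu}_{0}\trsp{\dot{\mu}}_{0})$. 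Substituting $J_{\Sigma}=\Sigma_{0}^{-1}(\tfrac{1}{\eta_{\mu}}\dot{\mu}_{0}\trsp{\mu}_{0}+\tfrac{1}{\eta_{\Sigma}}\dot{\Sigma}_{0})$ and the value of $\ddot{\mu}_{0}$, the terms involving $\mu_{0}$ and $\trsp{\mu}_{0}$ must cancel; this cancellation is the main obstacle, since one has to track several $\mu_{0}$-dependent terms carefully and confirm they vanish (they must, because the final answer is translation-invariant in $\mu_{0}$, reflecting the affine symmetry used in Theorem \ref{INoe}). After the cancellation one is left with $\ddot{\Sigma}_{0}=\eta_{\Sigma}^{2}v_{\Sigma}\Sigma_{0}^{-1}v_{\Sigma}-\eta_{\mu}\eta_{\Sigma}v_{\mu}\trsp{v_{\mu}}$, which is the claimed formula. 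Throughout I would set $\mu_{0}=0$ at the outset, legitimately, since all five second-derivative expressions are manifestly independent of $\mu_{0}$ and the affine invariance of the metric lets one translate the mean to the origin without changing any of the quantities computed.
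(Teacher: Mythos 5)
Your proposal is correct and follows essentially the same route as the paper: direct Taylor expansion for CMA and the xNES covariance, and differentiation of the first-order (Noether) geodesic equations of Theorem \ref{Noe1} for GIGO, with the $\mu_{0}$-dependent terms cancelling exactly as you predict. The only cosmetic difference is that you differentiate the form $\dot{\Sigma}=\eta_{\Sigma}\Sigma J_{\Sigma}-\tfrac{\eta_{\Sigma}}{\eta_{\mu}}\dot{\mu}\trsp{\mu}$ while the paper differentiates $\dot{\Sigma}=\eta_{\Sigma}\Sigma(J_{\Sigma}-J_{\mu}\trsp{\mu})$, which are the same equation.
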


\begin{proof}
We can immediately see that the second derivatives of $\mu_{\mathrm{xNES}}$, $\mu_{\mathrm{CMA}}$, and $\Sigma_{\mathrm{CMA}}$ are $0$. Next, we have

\begin{align*}
\Sigma_{\mathrm{xNES}}(\mu , \Sigma , v_{\mu}, v_{\Sigma} )(t) 
&=A \exp [t A^{-1} \eta_{\Sigma} v_{\Sigma} \trsp{(A^{-1})} ] \trsp{ A } \\
&=A\trsp{A}+ t \eta_{\Sigma} v_{\Sigma} + \frac{t^{2}}{2}\eta_{\Sigma}^{2}v_{\Sigma}\trsp{(A^{-1})}A^{-1}v_{\Sigma} +o(t^{2})\\
&=\Sigma + t \eta_{\Sigma} v_{\Sigma} +\frac{t^{2}}{2}\eta_{\Sigma}^{2}v_{\Sigma}\Sigma^{-1}v_{\Sigma} +o(t^{2}) .\end{align*}

The expression of $\Sigma_{\mathrm{xNES}}(\mu , \Sigma , v_{\mu}, v_{\Sigma} )''(0)$ follows.

Now, for GIGO, let us consider the geodesic starting at $(\mu_{0} , \Sigma_{0} )$ with initial speed $\left( \eta_{\mu}v_{\mu}, \eta_{\Sigma} v_{\Sigma} \right) $. By writing $J_{\mu} (0) = J_{\mu} (t)$, we find $\dot{\mu} (t) = \Sigma (t) \Sigma^{-1}_{0} \dot{\mu}_{0}$. We then easily have $\ddot{\mu} (0)= \dot{\Sigma}_{0} \Sigma^{-1}_{0} \dot{\mu}_{0}.$ In other words 
$$\mu_{\mathrm{GIGO}}(\mu , \Sigma , v_{\mu}, v_{\Sigma} )''(0)= \eta_{\mu} \eta_{\Sigma} v_{\Sigma }\Sigma_{0}^{-1} v_{\mu}.$$

Finally, by using Theorem \ref{Noe1}, and differentiating, we find
$$\ddot{\Sigma}
 = \eta_{\Sigma} \dot{\Sigma}(J_{\Sigma} - J_{\mu} \trsp{\mu} ) - \eta_{\Sigma} \Sigma J_{\mu} \trsp{\dot{\mu}},  $$
$$\ddot{\Sigma}_{0} 
= \eta_{\Sigma} \dot{\Sigma}_{0}\frac{1}{\eta_{\Sigma}} \Sigma^{-1}_{0}\dot{\Sigma}_{0} - \frac{\eta_{\Sigma}}{\eta_{\mu}} \dot{\mu}_{0} \trsp{\dot{\mu}_{0}}
=\eta_{\Sigma}^{2} v_{\Sigma} \Sigma_{0}^{-1} v_{\Sigma}-\eta_{\Sigma}\eta_{\mu} v_{\mu} \trsp{v_{\mu}}. $$
\end{proof}

In order to interpret these results, we will look at what happens in dimension $1$:\footnote{In higher dimensions, we can suppose the algorithms exhibit a similar behavior, but an exact interpretation is more difficult for GIGO in $\Gd$.}
\begin{itemize}
\item In \cite{xNES}, it has been noted that xNES converges to quadratic minimums more slowly than CMA, and that it is less subject to premature convergence. That fact can be explained by observing that the mean update is exactly the same for CMA and xNES whereas xNES tends to have a higher variance (Proposition \ref{TjDeri} shows this at order $2$, and it is easy to see that in dimension $1$, for any $\mu$, $\Sigma $, $v_{\mu}$, $v_{\Sigma}$, we have $\Sigma_{\mathrm{xNES}}(\mu , \Sigma, v_{\mu}, v_{\Sigma} ) > \Sigma_{\mathrm{CMA}}(\mu , \Sigma, v_{\mu}, v_{\Sigma} )$). 
\item At order $2$, GIGO moves the mean faster than xNES and CMA if the standard deviation is increasing, and more slowly if it is decreasing. This seems to be a reasonable behavior (if the covariance is decreasing, then the algorithm is presumably close to a minimum, and it should not leave the area too quickly). This remark holds only for isolated steps, because we do not take into account the evolution of the variance.
\item The geodesics of $\mathbb{G}_{1}$ are half-circles (see Figure \ref{FigGeo} below --- we recall $\mathbb{G}_{1}$ is the Poincaré half-plane). Consequently, if the mean is supposed to move (which always happens), then $\sigma \rightarrow 0$ when $\dt \rightarrow \infty$. For example, a step whose initial speed has no component on the standard deviation will always decrease it. See also Proposition \ref{deltacrit}, about the optimization of a linear function.
\item For the same reason, for a given initial speed, the update of $\mu$ \textit{always} stays bounded as a function of $\dt$: it is not possible to make one step of the GIGO algorithm go further than a fixed point by increasing $\dt$. Still, the geodesic followed by GIGO changes at each step, so the mean of the overall algorithm is \textit{not} bounded. 
\end{itemize}
\begin{figure}[h]
\begin{center}
\input{GrapheGeotex.tex}
\end{center}
\caption{One step of GIGO update}
\label{FigGeo}
\end{figure}


We now show that xNES follows the geodesics of $\Gd$ if the mean is fixed but that xNES and GIGO do not coincide otherwise.

\begin{prop}[xNES is not GIGO in the general case]
\label{xNESnotIGO}
Let $\mu, v_{\mu} \in \R^{d}, A\in \mathrm{GL}_{d}, v_{\Sigma} \in \mathrm{M}_{d}$.

Then the GIGO and xNES updates starting at $\mathcal{N} (\mu ,\Sigma)$ with initial speeds $v_{\mu}$ and $v_{\Sigma}$ follow the same trajectory if and only if the mean remains constant. In other words:

$T_{\mathrm{GIGO}} (\mu , \Sigma , v_{\mu}, v_{\Sigma} )=T_{\mathrm{xNES}} (\mu , \Sigma , v_{\mu}, v_{\Sigma})$ if and only if  $ v_{\mu} = 0$.
\end{prop}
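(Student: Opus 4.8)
The statement is an equivalence, so I would treat the two implications separately; the bulk of the work lies in the forward (``if'') direction, while the converse is essentially free once Proposition \ref{TjDeri} is available.

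For the converse (if the trajectories coincide then $v_\mu=0$) I would argue by contraposition and read off Proposition \ref{TjDeri}. There the second derivatives at $0$ of the covariance components are
\[
\Sigma_{\mathrm{GIGO}}''(0)=\eta_\Sigma^{2} v_\Sigma\Sigma^{-1}v_\Sigma-\eta_\mu\eta_\Sigma\, v_\mu\trsp{v_\mu},
\qquad
\Sigma_{\mathrm{xNES}}''(0)=\eta_\Sigma^{2} v_\Sigma\Sigma^{-1}v_\Sigma,
\]
so that $\Sigma_{\mathrm{GIGO}}''(0)-\Sigma_{\mathrm{xNES}}''(0)=-\eta_\mu\eta_\Sigma\, v_\mu\trsp{v_\mu}$. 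The matrix $v_\mu\trsp{v_\mu}$ is rank-one positive semidefinite and vanishes exactly when $v_\mu=0$; since the learning rates are nonzero, $v_\mu\neq0$ forces the two covariance curves to have distinct second-order Taylor coefficients, hence to differ. Two coinciding trajectories therefore require $v_\mu=0$.

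For the direct implication I would first check that $v_\mu=0$ leaves the mean fixed along both updates. For xNES this is immediate from Definition \ref{tjmaj}, since $\mu+\dt\,\eta_\mu v_\mu=\mu$. For GIGO, the geodesic starts with velocity $(\eta_\mu v_\mu,\eta_\Sigma v_\Sigma)=(0,\eta_\Sigma v_\Sigma)$, so Theorem \ref{Noe1} (in its twisted form) gives $J_\mu=\Sigma_0^{-1}v_\mu=0$, whence $\dot\mu_t=\eta_\mu\Sigma_t J_\mu=0$ for all $t$ and the mean stays at $\mu_0$. With the mean frozen, the same theorem reduces the covariance equation to $\dot\Sigma_t=\eta_\Sigma\Sigma_t\Sigma_0^{-1}v_\Sigma$, where $\Sigma_0=A\trsp{A}$. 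I would then substitute the xNES covariance curve $\Sigma(t)=A\exp(tB)\trsp{A}$, with $B=\eta_\Sigma A^{-1}v_\Sigma\trsp{(A^{-1})}$, into this ODE. Using $\trsp{A}\,\trsp{(A^{-1})}=I$ one obtains $\Sigma(t)\Sigma_0^{-1}=A\exp(tB)A^{-1}$ and $B\trsp{A}=\eta_\Sigma A^{-1}v_\Sigma$; combined with the commutation $B\exp(tB)=\exp(tB)B$ this yields $\dot\Sigma(t)=AB\exp(tB)\trsp{A}=\eta_\Sigma A\exp(tB)A^{-1}v_\Sigma=\eta_\Sigma\Sigma(t)\Sigma_0^{-1}v_\Sigma$, so the xNES curve solves the geodesic ODE with the correct initial data. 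Since it satisfies both the (trivial) mean equation and the covariance equation of Theorem \ref{Noe1}, it is a geodesic; by uniqueness of geodesics with prescribed initial position and velocity it coincides with $T_{\mathrm{GIGO}}$.

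The main obstacle is this last verification: one must recognize that the matrix-exponential form of the xNES covariance update is genuinely the Riemannian geodesic of the fixed-mean submanifold, and carry out the substitution correctly. The delicate points are the transpose identities for the chosen square root $A$ and the commutativity of $B$ with $\exp(tB)$; one should also recall, as recorded in Definition \ref{tjmaj}, that the xNES curve does not depend on the choice of $A$, so that this computation is well posed.
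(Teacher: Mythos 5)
Your proposal is correct and follows essentially the same route as the paper: the $v_{\mu}\neq 0$ direction is read off from the second-order expansion of Proposition \ref{TjDeri}, and the $v_{\mu}=0$ direction uses Theorem \ref{Noe1} to reduce the geodesic to the first-order equation $\dot{\Sigma}_{t}=\eta_{\Sigma}\Sigma_{t}\Sigma_{0}^{-1}v_{\Sigma}$, which the xNES matrix-exponential curve is checked to satisfy. Your explicit substitution of $\Sigma(t)=A\exp(tB)\trsp{A}$ is exactly the computation the paper delegates to the proof of Proposition \ref{xNESBGIGO} (there phrased as the preservation of $J_{\Sigma}=\Sigma^{-1}\dot{\Sigma}$ along the xNES update), so the two arguments coincide in substance.
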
 

\begin{proof}
If $v_{\mu}=0$, then we can compute the GIGO update by using Theorem \ref{Noe1}: since $J_{\mu} =0 $, $\dot{\mu }=0$, and $\mu $ remains constant. Now, we have $J_{\Sigma} =  \Sigma^{-1} \dot{\Sigma }$: this is enough information to compute the update. Since this quantity is also preserved by the xNES algorithm (see for example the proof of Proposition \ref{xNESBGIGO}), the two updates coincide.

If $v_{\mu}\neq 0$, then $ \Sigma_{\mathrm{xNES}}(\mu , \Sigma , v_{\mu}, v_{\Sigma} )''(0) -\Sigma_{\mathrm{GIGO}}(\mu , \Sigma , v_{\mu}, v_{\Sigma} )''(0) =\eta_{\mu} \eta_{\Sigma} v_{\mu} \trsp{ v_{\mu }} \neq 0 , $ and in particular $T_{\mathrm{GIGO}} (\mu , \Sigma , v_{\mu}, v_{\Sigma} )  \neq T_{\mathrm{xNES}} (\mu , \Sigma , v_{\mu}, v_{\Sigma} )$.
\end{proof}

%
\subsection{Blockwise GIGO}
\label{ssBGIGO}
Although xNES is not GIGO, it is possible to define a family of algorithms extending GIGO and including xNES, by decomposing our family of probability distributions as a product, and by following the restricted geodesics simultaneously.

\begin{defi}[Splitting]
Let $\Theta$ be a Riemannian manifold. A splitting of $\Theta$ is $n$ manifolds $\Theta_{1},...,\Theta_{n}$ and a diffeomorphism  $\Theta \cong \Theta_{1} \times ... \times \Theta_{n}$.
If for all $x\in \Theta$, for all $1 \leq i < j \leq n$, we also have $T_{i,x}M \perp T_{j,x}M$ as subspaces of $T_{x}M$ (see Notation \ref{NotaDecou}), then the splitting is said to be compatible with the Riemannian strucutre\footnote{If the Riemannian manifold is not ambiguous, we will simply write a \textit{compatible splitting}.}.
\end{defi}

We now give some notation, and we define the blockwise GIGO update:

\begin{nota}
\label{NotaDecou}
Let $\Theta $ be a Riemannian manifold, $\Theta_{1},...,\Theta_{n}$ a splitting of $\Theta$, $\theta = (\theta_{1},...,\theta_{n} )\in \Theta$, $Y \in T_{\theta}\Theta$, and $1 \leq i \leq n$. 
\begin{itemize}

\item We denote by $\Theta_{\theta , i}$ the Riemannian manifold
 $$ \{ \theta_{1} \} \times ... \times \{ \theta_{i-1} \} \times \Theta_{i} \times \{ \theta_{i+1} \} \times ... \times \{ \theta_{n} \}, $$
with the metric induced from $\Theta$. There is a canonical isomorphism of vector spaces $T_{\theta} \Theta = \oplus_{i=1}^{n} T \Theta_{\theta , i}$. Moreover, if the splitting is compatible, it is an isomorphism of Euclidean spaces.
\item We denote by $\Phi_{\theta , i}$ the exponential at $\theta$ of the manifold $\Theta_{\theta , i}$.

 \end{itemize}
\end{nota}

\begin{defi}[Blockwise GIGO update]
Let $\Theta_{1},...,\Theta_{n}$ be a compatible splitting.\footnote{If the splitting is not compatible, it is possible to define exactly the same algorithm, but it does not seem relevant.}
The blockwise GIGO algorithm in $\Theta$ with splitting $\Theta_{1},...,\Theta_{n}$ associated with sample size $N$, step sizes $\dt_{1}, ..., \dt_{n}$ and selection scheme $w$ is given by the following update rule:

\begin{equation} \theta \leftarrow(\theta_{1}^{t+\dt_{1}},...,\theta_{n}^{t+\dt_{n}}) \end{equation}
where
\begin{equation} Y= I^{-1} (\theta^{t} ) \sum_{i=1}^{N} \hat{w}_{i} \dep{\ln P_{\theta } (x_{i} )}{\theta},\end{equation}
\begin{equation} \theta_{i}^{t+\dt_{i}} = \Phi_{\theta^{t} , i} (\dt_{i}Y_{i}),\end{equation}
with $Y_{i}$ the $T\Theta_{\theta ,i}$-component of $Y$. This update only depends on the splitting (and not on the parametrization inside each $\Theta_{i}$).
\end{defi}

Since blockwise GIGO only depends on the splitting (since we make no other choice\footnote{Except the tunable parameters: sample size, step sizes, and selection scheme.}), it can be thought as almost parametrization-invariant.

Notice that blockwise GIGO updates and twisted GIGO updates are two different things: firstly, blockwise GIGO can be defined on any manifold with a compatible splitting, whereas twisted GIGO (and twisted IGO) are only defined for Gaussians\footnote{Maybe a more general form of twisted IGO could be defined.}. But even in $\Gd (\eta_{\mu},\eta_{\Sigma})$, with the splitting $(\mu,\Sigma)$, these two algorithms are different: for instance, if $\eta_{\mu}=\eta_{\Sigma}$, and $\dt=1$ then the twisted GIGO is the regular GIGO algorithm, whereas blockwise GIGO is not (actually, we will prove that it is the xNES algorithm). The only thing blockwise GIGO and twisted GIGO have in common is that they are compatible\footnote{As defined at the end of Section \ref{SubSecGIGO}: A parameter $\theta^{t}$ following these updates with $\dt \rightarrow 0$ and $N \rightarrow \infty$ is a solution of Equation \ref{TwIGOFlow}. } with the $(\eta_{\mu},\eta_{\Sigma})$-twisted IGO flow (\ref{TwIGOFlow}).

We now have a new description of the xNES algorithm:

\begin{prop}[xNES is a Blockwise GIGO algorithm]
\label{xNESBGIGO}
The Blockwise GIGO algorithm in $\Gd$ with splitting $\Phi: \mathcal{N} (\mu, \Sigma) \mapsto (\mu, \Sigma)$, sample size $N$, step sizes $\dt_{\mu}, \dt_{\Sigma}$ and selection scheme $w$ coincides with the xNES algorithm with sample size $N$, weights $w_{i}$, and learning rates $\eta_{\mu}=\dt_{\mu}, \eta_{\sigma}=\eta_{B}=\dt_{\Sigma}$.
\end{prop}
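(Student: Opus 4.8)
The plan is to compute the IGO speed once, split it along the two blocks of the decomposition $\mathcal{N}(\mu,\Sigma)\mapsto(\mu,\Sigma)$, and then identify the intrinsic geodesic of each block with the corresponding half of the xNES update. First I would record the two components of the IGO speed $Y=I^{-1}(\theta)\sum_i \hat{w}_i \partial_\theta \ln P_\theta(x_i)$ in the parametrization $(\mu,\Sigma)$. By Proposition \ref{FishG} the Fisher metric has no mean--covariance cross terms, since a derivative with respect to a mean coordinate annihilates the $\Sigma$-derivatives and conversely; this single observation does double duty, showing both that the splitting is compatible (so $T\Theta_{\theta,\mu}\perp T\Theta_{\theta,\Sigma}$ and $Y=(Y_\mu,Y_\Sigma)$ with each piece the natural gradient inside its own block) and that the two blocks can be handled independently. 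The same computation that underlies the description of pure rank-$\mu$ CMA-ES as an IGO algorithm gives $Y_\mu=\sum_i \hat{w}_i(x_i-\mu)$ and $Y_\Sigma=\sum_i \hat{w}_i\big((x_i-\mu)\trsp{(x_i-\mu)}-\Sigma\big)$. Writing $z_i=A^{-1}(x_i-\mu)$ with $A\trsp{A}=\Sigma$, these rewrite as $Y_\mu=A G_\mu$ and $A^{-1}Y_\Sigma\trsp{(A^{-1})}=G_M$, the very vectors of Definition \ref{xN}.

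Next I would dispatch the mean block, which is the easy half. The submanifold $\Theta_{\theta,\mu}$ carries the induced metric $\trsp{d\nu}\Sigma^{-1}d\nu$ with $\Sigma$ held fixed; this metric is constant, hence flat, so its intrinsic geodesics are straight lines and $\Phi_{\theta,\mu}(\dt_\mu Y_\mu)=\mu+\dt_\mu Y_\mu=\mu+\dt_\mu A G_\mu$. This is exactly the xNES mean update with $\eta_\mu=\dt_\mu$ (after the identification of the IGO weights $\hat{w}_i$ with the xNES weights $w_i$, as in Proposition \ref{xNESIGO}).

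The covariance block is where the real work lies, because its intrinsic geodesics are genuinely curved and I must match $\Phi_{\theta,\Sigma}$ with a matrix-exponential update. The cleanest route is to note that $\Theta_{\theta,\Sigma}$, the Gaussians with fixed mean $\mu$, is the fixed-point set of the isometry $\mathcal{N}(\nu,\Sigma)\mapsto\mathcal{N}(2\mu-\nu,\Sigma)$ — an affine change of basis with $A=-I$, which is an isometry by the invariance established in the proof of Theorem \ref{INoe}. Since the fixed-point set of an isometry is totally geodesic, the intrinsic geodesics of $\Theta_{\theta,\Sigma}$ coincide with the geodesics of $\Gd$ having $\dot{\mu}=0$. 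Proposition \ref{xNESnotIGO}, specialized to $v_\mu=0$, then says precisely that such a GIGO update agrees with the xNES covariance update. Spelled out, $\Phi_{\theta,\Sigma}(\dt_\Sigma Y_\Sigma)=A\exp\!\big(\dt_\Sigma A^{-1}Y_\Sigma\trsp{(A^{-1})}\big)\trsp{A}=A\exp(\dt_\Sigma G_M)\trsp{A}$, which is the xNES covariance update with $\eta_\Sigma=\dt_\Sigma$.

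Assembling the two blocks gives the claimed coincidence with $\eta_\mu=\dt_\mu$ and $\eta_\sigma=\eta_B=\eta_\Sigma=\dt_\Sigma$. I expect the main obstacle to be the covariance block, and specifically the justification that the totally-geodesic property legitimately imports Proposition \ref{xNESnotIGO} (the intrinsic exponential $\Phi_{\theta,\Sigma}$ is a priori the exponential of the submanifold, not of the ambient $\Gd$). If one prefers to avoid the isometry argument, the same identity can be obtained by specializing the explicit geodesics of Corollary \ref{VraiesGeos} to $\mu_0=\dot{\mu}_0=0$ and checking that the resulting covariance curve is $A\exp(\dt_\Sigma G_M)\trsp{A}$; the remaining care is purely the weight and step-size bookkeeping already fixed in the mean block.
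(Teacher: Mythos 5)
Your proof is correct, and for the covariance block it takes a genuinely different route from the paper's. The mean block is handled the same way in both (the induced metric $\trsp{h}\Sigma^{-1}h$ on $\Rd\times\{\Sigma\}$ is constant in $\mu$, hence flat, so the intrinsic exponential is affine and gives $\mu+\dt_{\mu}AG_{\mu}$). For the covariance block, the paper stays intrinsic: it writes the induced Lagrangian $\tfrac{1}{2}\tr(\dot{\Sigma}\Sigma^{-1}\dot{\Sigma}\Sigma^{-1})$ on the fixed-mean slice, applies Noether's theorem \emph{there} to get the first-order equation $\Sigma^{-1}\dot{\Sigma}=\mathrm{const}$ for the intrinsic geodesics, and verifies by a two-line computation that the xNES curve $A\exp(tG_{M})\trsp{A}$ preserves this quantity. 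You instead prove that the fixed-mean slice is totally geodesic in $\Gd$, as the fixed-point set of the reflection isometry $\mathcal{N}(\nu,\Sigma)\mapsto\mathcal{N}(2\mu-\nu,\Sigma)$ (whose differential fixes exactly $\{0\}\times\mathrm{Sym}$, so the fixed-point set is a submanifold with the right tangent space), so that $\Phi_{\theta,\Sigma}$ agrees with the ambient exponential restricted to $(0,v_{\Sigma})$; this correctly resolves the intrinsic-versus-ambient issue you flag, and it buys a conceptual explanation of why the restricted geodesics are computable at all, at the cost of invoking the totally-geodesic lemma that the paper's submanifold-Noether computation sidesteps. One caveat on dependencies: if you discharge the $\dot{\mu}=0$ case by citing Proposition \ref{xNESnotIGO}, note that the paper proves that proposition by citing the present one for the fact that xNES preserves $J_{\Sigma}=\Sigma^{-1}\dot{\Sigma}$, so to avoid a circular reference you should either perform that short computation directly (as the paper does) or use your fallback via Corollary \ref{VraiesGeos} — which does check out: with $\dot{\mu}_{0}=0$ one may take $G=A_{0}^{-1}\dot{\Sigma}_{0}\trsp{(A_{0}^{-1})}$, whence $R(s)=\exp(sG/2)$ and $\Sigma(s)=A_{0}\exp(sG)\trsp{A_{0}}$, i.e.\ exactly $A\exp(\dt_{\Sigma}G_{M})\trsp{A}$ at $s=1$.
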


\begin{proof}
Firstly, notice that the splitting $(\mu , \Sigma )$ is compatible, by Proposition \ref{FishG}.

Now, let us compute the Blockwise GIGO update: we have $\Gd \cong \Rd \times P_{d}$ where $P_{d}$ is the space of real positive definite matrices of dimension $d$. We have $\Theta_{\theta^{t},1}=(\Rd \times \{ \Sigma^{t} \}) \hookrightarrow \Gd$, $\Theta_{\theta^{t},2}=(\{ \mu^{t} \} \times P_{d})\hookrightarrow \Gd$. The induced metric on $\Theta_{\theta^{t},1}$ is the Euclidian metric, so we have
$$\mu \leftarrow \mu^{t} + \dt_{1} Y_{\mu}.$$

Since we have already shown (using the notation in Definition \ref{xN}) that $Y_{\mu} =AG_{\mu}$ (in the proof of Proposition \ref{xNESIGO}), 
 we find 
$$\mu \leftarrow \mu^{t} + \dt_{1} AG_{\mu}.$$
On $\Theta_{\theta^{t},2}$, we have the following Lagrangian for the geodesics:
$$\mathcal{L}(\Sigma ,\dot{\Sigma} )= \frac{1}{2} \tr (\dot{\Sigma}\Sigma^{-1}\dot{\Sigma}\Sigma^{-1} ). $$

By applying Noether's theorem, we find that $$J_{\Sigma} = \Sigma^{-1}\dot{\Sigma}$$ is invariant along the geodesics of $\Theta_{\theta^{t},2}$, so they are defined by the equation $\dot{\Sigma}=\Sigma J_{\Sigma}=\Sigma \Sigma_{0}^{-1} \dot{\Sigma_{0}}$ (and therefore, any update preserving the invariant $J_{\Sigma}$ will satisfy this first-order differential equation and follow the geodesics of $\Theta_{\theta^{t}, 2}$). The xNES update for the covariance matrix is given by $A(t)=A_{0}\exp (t G_{M}/2) $. Therefore, we have $\Sigma (t) = A_{0} \exp (tG_{M} ) \trsp{A_{0}}$,
$\Sigma^{-1} (t)= \trsp{(A_{0}^{-1})} \exp (-tG_{M})  A_{0}^{-1}$,
$ \dot{\Sigma} (t) = A_{0} \exp (tG_{M} ) G_{M}\trsp{A_{0}}$,
and finally
$\Sigma^{-1} (t)\dot{\Sigma} (t)=\trsp{(A_{0}^{-1})} G_{M}\trsp{A_{0}}=\Sigma^{-1}_{0} \dot{\Sigma}_{0} $. So xNES preserves $J_{\Sigma}$, and therefore, xNES follows the geodesics of $\Theta_{\theta^{t}, 2}$.\footnote{Notice that we had already proven this in Proposition \ref{xNESnotIGO}, since we are looking at the geodesics of $\Gd$ with fixed mean.}

\end{proof}

Although blockwise GIGO is somewhat ``less natural" than GIGO, it can be easier to compute for some splittings (as we have just seen), and in the case of the Gaussian distributions, the mean-covariance splitting seems natural.

Another advantage of blockwise GIGO is that it allows us to recover the learning rates without having to twist the metric, which felt like an ad-hoc solution. In particular, blockwise GIGO can be defined for families of probability distributions that are not Gaussian. 

%
%
%

\section{Numerical Experiments}
\label{SecNum}
We conclude this article with some numerical experiments to compare the behavior of GIGO, xNES and CMA-ES (we give the pseudocodes for these articles in the Appendix). We made two series of tests. The first one is a performance test, using classical benchmark functions, and the settings from \cite{xNES}. The goal of the second series of tests is to illustrate the computations in Section \ref{SecTj} by plotting the trajectories (standard deviation versus mean) of these three algorithms in dimension $1$. 

The source code is available at \url{https://www.lri.fr/~bensadon}. 

\subsection{Benchmarking}

For the first series of experiments, presented in Figure $\ref{FigureBenchmark}$, we used the following parameters, taken from \cite{xNES}:
\begin{itemize}
\item Varying dimension.

\item Sample size: $\lfloor 4+3 \log (d) \rfloor$

\item Weights:  $w_{i} = \frac{\max (0, \log (\frac{n}{2} +1)-\log (i)}{\sum_{j=1}^{N} \max (0, \log (\frac{n}{2} +1)-\log (j) } - \frac{1}{N}$

\item IGO step size and learning rates: $\dt=1, \eta_{\mu}=1, \eta_{\Sigma}=\frac{3}{5}\frac{ 3 +\log (d)}{d\sqrt{d}}.$

\item Initial position: $\theta^{0}=\mathcal{N} (x_{0} , I)$, where $x_{0}$ is a random point of the circle with center $0$, and radius $10$.

\item Euler method for GIGO: Number of steps: $100$. We used the GIGO-A variant of the algorithm.\footnote{No significant difference was noticed with GIGO-$\Sigma$, or with the exact GIGO algorithm. The only advantage of having an explicit solution of the geodesic equations is that the update is quicker to compute.}

\item We chose not to use the exact expression of the geodesics for this benchmarking to show that having to use the Euler method is fine. However, we did run the tests, and the results are basically the same as GIGO-$A$.
\end{itemize}

We plot the median number of runs to achieve target fitness ($10^{-8}$). Each algorithm has been tested in dimension $2$, $4$, $8$, $16$, $32$ and $64$: a missing point means that all runs converged prematurely. \clearpage

\textbf{Failed runs} 

In Figure \ref{FigureBenchmark}, a point is plotted even if only one run was successful. Below is the list of the settings for which some runs converged prematurely.

\begin{itemize}
\item Only one run reached the optimum for the cigar-tablet function with CMA in dimension $8$
\item Seven runs (out of $24$) reached the optimum for the Rosenbrock function with CMA in dimension $16$
\item About half the runs reached the optimum for the sphere function with CMA in dimension $4$.
\end{itemize}
For the following settings, \emph{all} runs converged prematurely.
\begin{itemize}
\item GIGO did not find the optimum of the Rosenbrock function in any dimension.
\item CMA did not find the optimum of the Rosenbrock function in dimension $2$, $4$, $32$ and $64$.
\item All the runs converged prematurely for the Cigar-tablet function in dimension $2$ with CMA, for the Sphere function in dimension $2$ for all algorithms, and for the Rosenbrock function in dimension $2$ and $4$ for all algorithms.
\end{itemize}

\begin{figure}
\hspace{-2.7cm}
\begin{tabular}{l l}

\begin{tabular}[b]{ | r | c  |l |}

\hline
Dimension & $d$ &From $2$ to $64$\\

Sample size  & $N$ & $4+3\log (d)$ \\

Weights  & $(w_{i})_{i\in [1,N]}$ & $\frac{\max (0, \log (\frac{n}{2} +1)-\log (i)}{\sum_{j=1}^{N} \max (0, \log (\frac{n}{2} +1)-\log (j) } - \frac{1}{N}$ \\

IGO Step size & $ \dt$ & $1$ \\

Mean  & $\eta_{\mu}$ & $1$ \\
learning rate & & \\

Covariance   & $\eta_{\Sigma}$ & $\frac{3}{5}\frac{ 3 +\log (d)}{d\sqrt{d}}$ \\
learning rate & & \\

\hline

Euler step-size   & $h$ &$0.01$($100$ steps) \\
(for GIGO only) & & \\

GIGO implementation   & & GIGO-$A$  \\

\hline
Sphere function & &$x \mapsto \sum_{i=1}^{d} x_{i}^{2}$\\

Cigar-tablet &  & $x \mapsto x_{1}^{2} +\sum_{i=2}^{d-1} 10^{4}x_{i}^{2} + 10^{8} x_{d}^{2}$\\

Rosenbrock & & $x \mapsto \sum_{i=1}^{d-1} (100(x_{i}^{2}-x_{i+1})^{2}+(x_{i}-1)^{2})$\\
\hline

$x$-axis &  & Dimension\\

$y$-axis & & Number of function calls\\
 & & to reach fitness $10^{-8}$.\\
\hline

\end{tabular}

&

\input{GrapheCTtex.tex}

\\

\input{GrapheSphtex.tex}

 &

\input{GrapheRostex.tex}

 \\
\end{tabular}

\caption{Median number of function calls to reach $10^{-8}$ fitness on $24$ runs for: Sphere function, Cigar-tablet function and Rosenbrock function. Initial position $\theta^{0}=\mathcal{N} (x_{0},I)$, with $x_{0}$ uniformly distributed on the circle of center $0$ and radius $10$. We recall that the ``CMA-ES" algorithm here is using the so-called \textit{pure rank-$\mu$ CMA update}.}
\label{FigureBenchmark}
\end{figure}

As the last item of the list above shows, all the algorithms converge prematurely in low dimension, probably because the covariance learning rate has been set too high (or because the sample size is too small). This is different from the results in \cite{xNES}.

This remark aside, as noted in \cite{xNES}, the xNES algorithm shows more robustness than CMA and GIGO: it is the only algorithm able to find the minimum of the Rosenbrock function in high dimensions. However, its convergence is consistently slower.

In terms of performance, when both of them work, CMA and GIGO are extremely close (GIGO is usually a bit better)\footnote{We recall that we use a simplified version of CMA (pure rank-$\mu$), that can be described in the IGO framework. It would be more accurate to call it ``IGO in the parametrization $(\mu,\Sigma)$".}. An advantage of GIGO is that it is theoretically defined for any $\dt$, $\eta_{\Sigma}$, whereas the covariance matrix maintained by CMA (not only pure rank-$\mu$ CMA) can stop being positive definite if $\eta_{\Sigma}\dt>1$. However, in that case, the GIGO algorithm is prone to premature convergence (remember Figure \ref{FigGeo}, and see Proposition \ref{deltacrit} below), and learning rates that high are rarely used in practice. 

\clearpage
\subsection{Plotting trajectories in $\mathbb{G}_{1}$}
\label{SubSecTj}
We want the second series of experiments to illustrate the remarks about the trajectories of the algorithms in Section \ref{SecTj}, so we decided to take a large sample size to limit randomness, and we chose a fixed starting point for the same reason. We use the weights below because of the property of quantile improvement proved in \cite{QuantImp}: the $1/4$-quantile will improve at each step. The parameters we used were the following:
\begin{itemize}

\item Sample size: $\lambda=5000$

\item Dimension $1$ only.

\item Weights: $w= 4.\mathbf{1}_{q\leq 1/4}$ ($w_{i}=4.\mathbf{1}_{i\leq 1250})$

\item IGO step size and learning rates: $\eta_{\mu}=1, \eta_{\Sigma}=\frac{3}{5}\frac{3 +   \log (d)}{d\sqrt{d}}=1.8,$ varying $\dt$.

\item Initial position: $\theta^{0}=\mathcal{N}(10, 1)$

\item Dots are placed at $t=0$, $1$, $2\ldots$ (except for the graph $\dt=1.5$, for which there is a dot for each step).
\end{itemize}

Figures \ref{Graphe11} to \ref{Graphe1n} show the optimization of $x\mapsto x^{2}$, and Figures \ref{Graphen1} to \ref{Graphenn} show the optimization of $x\mapsto -x$.

Figures \ref{Graphe1n-1}, \ref{Graphe1n} and \ref{Graphenn} show that when $\dt\geq 1$, GIGO reduces the covariance even at the first step. 
More generally, when using the GIGO algorithm in $\tGd$ for the optimization of a linear function, there exists a critical step size $\dt_{\mathrm{cr}}$ (depending on the learning rates $\eta_{\mu}, \eta_{\sigma}$, and on the weights $w_{i}$), above which GIGO will \textit{converge}, and we can compute its value when the weights are of the form $\mathbf{1}_{q\leq q_{0}} $\footnote{Notice that for $q_{0}\geq 0.5$, the discussion is not relevant because in that case, even the IGO \emph{flow} converges prematurely. Also compare with the critical $\dt$ of smoothed cross entropy method, and IGO ML, in \cite{IGO}.}.
\begin{prop}
Let $d\in \N$, $k$, $\eta_{\mu}$, $\eta_{\sigma} \in \R_{+}^{*}$ let $w= k.\mathbf{1}_{q \leq q_{0}}$, and let $$\deff{g}{\Rd}{\R}{x}{-x_{1}}.$$

Let $\mu_{n}$ be the first coordinate of the mean, and let $\sigma_{n}^{2}$ be the variance (at step $n$) maintained by the $(\eta_{\mu},\eta_{\sigma})$-twisted geodesic IGO algorithm in $\tGd$ associated with selection scheme $w$, sample size $\infty$\footnote{It has been proved in \cite{IGO} that IGO algorithms are \textit{consistent} with the IGO flow, i.e. they get closer and closer to the IGO flow as sample size tends to infinity. In other words $$\lim_{N \rightarrow \infty} \sum_{i=1}^{N} \hat{w}_{i} \dep{\ln P_{\theta} (x_{i})}{\theta}= \tilde{\nabla}_{\theta} \int_{X} W^{f}_{\theta^{t} (x)} P_{\theta} (\d x ).$$ Sample size $\infty$ means we replace the IGO speed in the GIGO update by its limit for large $N$. In particular, it is not random.}, and step size $\dt$, when optimizing $g$.

There exists $\dt_{cr}$ such that:
\begin{itemize}
\item if $\dt > \dt_{cr}$, $(\sigma_{n})$ converges to $0$ with exponential speed, and $(\mu_{n})$ converges.
\item if $\dt = \dt_{cr}$, $(\sigma_{n})$ remains constant, and $(\mu_{n})$ tends to $\infty$ with linear speed.
\item if $0 < \dt < \dt_{cr}$, both $(\sigma_{n})$ and $\mu_{n}$ tend to $\infty$ with exponential speed.
\end{itemize}

\end{prop}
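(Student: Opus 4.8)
The plan is to exploit the homogeneity of the IGO speed in $\sigma$ together with the fact that, after rescaling the mean as in Theorem \ref{tgdtwist}, $\tGd(\eta_\mu,\eta_\sigma)$ is the Poincaré half-plane, whose geodesics are half-circles meeting the axis $\sigma=0$.

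First I would compute the (infinite sample size) twisted IGO speed at $\mathcal{N}(\mu,\sigma^2 I)$ for $g(x)=-x_1$. Writing $q(x)=P(g(x')\leq g(x))=P(x_1'\geq x_1)$ with $x_1'\sim\mathcal{N}(\mu_1,\sigma^2)$, the selected set $\{q\leq q_0\}$ is the half-space $\{x_1\geq \mu_1+\alpha\sigma\}$, where $\alpha:=\Phi^{-1}(1-q_0)$ and $\Phi$ is the standard normal cdf. By rotational symmetry of the Gaussian in the directions orthogonal to $e_1$, the speed has no component on $\mu_j$ for $j\geq 2$, so those coordinates never move and the dynamics reduces to the pair $(\mu_1,\sigma)$. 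Using the expression for the IGO speed in $\tGd$ and the elementary identities $\int_\alpha^\infty z\varphi(z)\,\d z=\varphi(\alpha)$ and $\int_\alpha^\infty z^2\varphi(z)\,\d z=q_0+\alpha\varphi(\alpha)$ (with $\varphi$ the standard normal density), I obtain, after twisting, $v_\mu=\eta_\mu k\varphi(\alpha)\,\sigma$ and $v_\sigma=\eta_\sigma\frac{k\alpha\varphi(\alpha)}{2d}\,\sigma$. The two features that matter are that both components are \emph{positive multiples of $\sigma$}, the positivity of $v_\sigma$ being exactly where $q_0<1/2$, i.e. $\alpha>0$, enters (matching the footnote's restriction).

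Next I would turn this degree-one homogeneity into a self-similar recursion. Horizontal translations $(\mu,\sigma)\mapsto(\mu+c,\sigma)$ and common dilations $(\mu,\sigma)\mapsto(\lambda\mu,\lambda\sigma)$ are isometries of the (rescaled) half-plane. Let $T_n\colon(\mu,\sigma)\mapsto\bigl((\mu-\mu_n)/\sigma_n,\ \sigma/\sigma_n\bigr)$ be the isometry sending the current state $(\mu_n,\sigma_n)$ to $(0,1)$; its differential is $\sigma_n^{-1}\mathrm{Id}$, so it sends the initial speed $(v_\mu,v_\sigma)=\sigma_n(\tilde a,\tilde b)$ to the fixed vector $(\tilde a,\tilde b)$, independent of $n$. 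Since a GIGO step follows a geodesic and $T_n$ is an isometry, one step in the normalized frame always lands at the same point $(p(\dt),r(\dt))$, namely the endpoint at time $\dt$ of the geodesic issued from $(0,1)$ with speed $(\tilde a,\tilde b)$. Transporting back by $T_n^{-1}$ yields $\mu_{n+1}=\mu_n+p(\dt)\sigma_n$ and $\sigma_{n+1}=r(\dt)\sigma_n$, hence $\sigma_n=r(\dt)^n\sigma_0$ and $\mu_n=\mu_0+p(\dt)\sigma_0\sum_{j=0}^{n-1}r(\dt)^j$.

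Finally I would read off $r(\dt)$ geometrically. Because $\tilde a>0$, the geodesic from $(0,1)$ is a genuine half-circle hitting $\sigma=0$; along it the height starts at $1$, first increases (as $v_\sigma>0$), reaches an apex, then decreases monotonically to $0$. Thus $r(\dt)$ equals $1$ at $\dt=0$, exceeds $1$ for small $\dt>0$, and crosses $1$ exactly once, at a value $\dt_{cr}>0$, after which $r(\dt)<1$; moreover $p(\dt)>0$ for all $\dt>0$ since $\mu$ is monotone along the half-circle. Substituting into the recursion gives the trichotomy: for $\dt>\dt_{cr}$, $r<1$ so $\sigma_n\to 0$ geometrically and $\mu_n\to\mu_0+p\sigma_0/(1-r)$; for $\dt=\dt_{cr}$, $r=1$ so $\sigma_n\equiv\sigma_0$ and $\mu_n=\mu_0+np\sigma_0\to\infty$ linearly; for $0<\dt<\dt_{cr}$, $r>1$ so both $\sigma_n$ and the increments of $\mu_n$ grow like $r^n$. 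The main obstacle is the homogeneity/isometry bookkeeping, namely verifying that the twisted speed is exactly degree one in $\sigma$ and that $T_n$ rescales it to an $n$-independent vector; once this is established the trichotomy reduces to the monotonicity of the height along a hyperbolic half-circle, and the explicit geodesics of Proposition \ref{calcultgdtwist} even give $\dt_{cr}$ in closed form.
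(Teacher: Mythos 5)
Your proposal is correct and follows essentially the same route as the paper: both arguments rest on the twisted IGO speed for a linear function being a fixed positive multiple of $\sigma$ (with positivity of the $\sigma$-component coming from $q_{0}<1/2$), which makes each GIGO step a rescaled copy of the previous one, so that $\sigma_{n+1}=f(\dt)\,\sigma_{n}$ with $f$ independent of $n$, and the half-circle shape of the hyperbolic geodesics forces $f$ to cross $1$ exactly once at $\dt_{\mathrm{cr}}$. The only cosmetic differences are that you derive the speed by direct Gaussian integration instead of citing the IGO-flow formulas, and you obtain the self-similar recursion from the translation--dilation isometries of the half-plane rather than by reading it off the explicit geodesic of Proposition \ref{calcultgdtwist} as the paper does.
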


The proof and the expression of $\dt_{\mathrm{cr}}$ can be found in the Appendix.

In the case corresponding to $k=4$, $n=1$, $q_{0}=1/4$, $\eta_{\mu}=1$, $\eta_{\sigma}=1.8$, we find:
\begin{equation}\dt_{\mathrm{cr}}    \approx  0.84  .\end{equation}

%

\clearpage


\begin{figure}
\vspace{-2cm}
\caption{Trajectories of GIGO, CMA and xNES optimizing $x\mapsto x^{2}$ in dimension $1$ with $\dt =0.01$, sample size $5000$, weights $w_{i}=4.\mathbf{1}_{i\leq 1250}$, and learning rates $\eta_{\mu}=1$, $\eta_{\Sigma}=1.8$. One dot every $100$ steps. All algorithms exhibit a similar behavior}
\begin{tabular}{c}
\input{Graphe001tex.tex}

\label{Graphe11}
\\
\input{Graphe01tex.tex}
\end{tabular}

\caption{Trajectories of GIGO, CMA and xNES optimizing $x\mapsto x^{2}$  in dimension $1$ with $\dt =0.1$, sample size $5000$, weights $w_{i}=4.\mathbf{1}_{i\leq 1250}$, and learning rates $\eta_{\mu}=1$, $\eta_{\Sigma}=1.8$. One dot every $10$ steps. All algorithms exhibit a similar behavior, differences start to appear. It cannot be seen on the graph, but the algorithm closest to zero after $400$ steps is CMA ($\sim 1.10^{-16}$, followed by xNES ($\sim 6.10^{-16}$) and GIGO ($\sim 2.10^{-15}$). }
\end{figure}

\begin{figure}
\vspace{-2.5cm}
\caption{Trajectories of GIGO, CMA and xNES optimizing $x\mapsto x^{2}$  in dimension $1$ with $\dt =0.5$, sample size $5000$, weights $w_{i}=4.\mathbf{1}_{i\leq 1250}$, and learning rates $\eta_{\mu}=1$, $\eta_{\Sigma}=1.8$. One dot every $2$ steps. Stronger differences. Notice that after one step, the lowest mean is still GIGO ($\sim 8.5$, whereas xNES is around $8.75$), but from the second step, GIGO has the highest mean because of the lower variance.}
\begin{tabular}{c}
\input{Graphe05tex.tex}

\\
\input{Graphe1tex.tex}
\end{tabular}
\caption{Trajectories of GIGO, CMA and xNES optimizing $x\mapsto x^{2}$  in dimension $1$ with $\dt =1$, sample size $5000$, weights $w_{i}=4.\mathbf{1}_{i\leq 1250}$, and learning rates $\eta_{\mu}=1$, $\eta_{\Sigma}=1.8$. One dot per step. The CMA-ES algorithm fails here, because at the fourth step, the covariance matrix is not positive definite anymore (It is easy to see that the CMA-ES update is always defined if $\dt \eta_{\Sigma} <1$, but this is not the case here). Also notice (see also Proposition \ref{deltacrit}) that at the first step, GIGO \emph{decreases} the variance, whereas the $\sigma$-component of the IGO speed is positive.}
\label{Graphe1n-1}
\end{figure}

\begin{figure}
\vspace{-1cm}
\caption{Trajectories of GIGO, CMA and xNES optimizing $x\mapsto x^{2}$  in dimension $1$ with $\dt =1.5$, sample size $5000$, weights $w_{i}=4.\mathbf{1}_{i\leq 1250}$, and learning rates $\eta_{\mu}=1$, $\eta_{\Sigma}=1.8$. One dot per step. Same as $\dt=1$ for CMA. GIGO converges prematurely.}
\begin{tabular}{c}
\input{Graphe15tex.tex}

\label{Graphe1n}
\\
\input{Lin001tex.tex}
\end{tabular}
\caption{Trajectories of GIGO, CMA and xNES optimizing $x\mapsto -x$  in dimension $1$ with $\dt =0.01$, sample size $5000$, weights $w_{i}=4.\mathbf{1}_{i\leq 1250}$, and learning rates $\eta_{\mu}=1$, $\eta_{\Sigma}=1.8$. One dot every $100$ steps. Almost the same for all algorithms.}
\label{Graphen1}
\end{figure} 

\begin{figure}
\vspace{-1.3cm}
\caption{Trajectories of GIGO, CMA and xNES optimizing $x\mapsto -x$  in dimension $1$ with $\dt =0.1$, sample size $5000$, weights $w_{i}=4.\mathbf{1}_{i\leq 1250}$, and learning rates $\eta_{\mu}=1$, $\eta_{\Sigma}=1.8$. One dot every $10$ steps. It is not obvious on the graph, but xNES is faster than CMA, which is faster than GIGO.}
\begin{tabular}{c}
\input{Lin01tex.tex}

\\
\input{Lin1tex.tex}
\end{tabular}
\caption{Trajectories of GIGO, CMA and xNES optimizing $x\mapsto -x$  in dimension $1$ with $\dt =1$, sample size $5000$, weights $w_{i}=4.\mathbf{1}_{i\leq 1250}$, and learning rates $\eta_{\mu}=1$, $\eta_{\Sigma}=1.8$. One dot per step. GIGO \emph{converges}, for the reasons discussed earlier.}
\label{Graphenn}
\end{figure} 

\clearpage
\section*{Conclusion}
\addcontentsline{toc}{section}{Conculsion}
We introduced the geodesic IGO algorithm and, we showed that in the case of Gaussian distributions, Noether's theorem directly gives a first order equation satisfied by the geodesics.
In terms of performance, the GIGO algorithm is similar to pure rank-$\mu$ CMA-ES, which is rather encouraging
: it would be interesting to test GIGO on real problems. Moreover, GIGO is a reasonable and totally parametrization invariant\footnote{Provided we can compute the solution of the equations of the geodesics.} algorithm, and as such, it should be studied for other families of probability distributions\footnote{Like Bernoulli distributions. However, in that case, the length of the geodescis is finite, and other problems arise.}. Noether's theorem could be a crucial tool for this.

We also showed that xNES and GIGO are not the same algorithm, and we defined Blockwise GIGO, a simple extension of the GIGO algorithm showing that xNES has a special status as it admits a definition which is ``almost" parametrization-invariant.

\newpage

\section*{Appendix}
\addcontentsline{toc}{section}{Appendix}

\begin{proof}[Proof of Proposition \ref{deltacrit}]

Let us first consider the case $k=1$.

When optimizing a linear function, the non-twisted IGO flow in $\tGd$ with the selection function $w:q\mapsto \mathbf{1}_{q\leq q_{0}}$ is known \cite{IGO}, and in particular, we have:

\begin{equation}
\label{muntIGOf}
\mu_{t}=\mu_{0}+ \frac{\beta (q_{0})}{\alpha (q_{0})} \sigma_{t},
\end{equation}
\begin{equation}
\label{sigmantIGOf}
\sigma_{t}=\sigma_{0}\exp ( \alpha(q_{0}) t),
\end{equation}
where, if we denote by $\mathcal{N}$ a random vector following a standard normal distribution, and $\mathcal{F}$ the cumulative distribution of a standard normal distribution,
\begin{equation}
\alpha (q_{0},d)=\frac{1}{2d} \left( \int_{0}^{q_{0}} \mathcal{F}^{-1} (u)^{2} du - q_{0} \right),
\end{equation}
and 
\begin{equation}
\beta (q_{0} )= \mathbb{E} ( \mathcal{N}  \mathbf{1}_{\mathcal{N} \leq \mathcal{F}^{-1}(q_{0})}).
\end{equation}
In particular, $\alpha:=\alpha (\frac{1}{4}, 1 )\approx 0.107 $ and $\beta:=\beta (\frac{1}{4})\approx -0.319$.

With a minor modification of the proof in \cite{IGO}, we find that the $(\eta_{\mu},\eta_{\sigma})$-twisted IGO flow is given by:
\begin{equation}
\label{muIGOf}
\mu_{t}=\mu_{0}+ \frac{\beta (q_{0})}{\alpha (q_{0})} \sigma_{0} \exp (\eta_{\mu} \alpha (q_{0})t),
\end{equation}
\begin{equation}
\label{sigmaIGOf}
\sigma_{t}=\sigma_{0}\exp (\eta_{\sigma}\alpha(q_{0}) t),
\end{equation}

Notice that Equation \ref{muIGOf} shows that the assertions about the convergence of $(\sigma_{n})$ immediately imply 
the assertions about the convergence of $(\mu_{n})$.\newline

Let us now consider a step of the GIGO algorithm:
The twisted IGO speed is $Y=( \eta_{\mu} \beta \sigma_{0},\eta_{\sigma} \alpha \sigma_{0} )$, with $\alpha \sigma_{0}>0$ (i.e. the variance should be increased\footnote{This is where we need $q_{0} < 0.5$.}).

Proposition \ref{calcultgdtwist} shows that the covariance at the end of the step is (using the same notation):
\begin{equation}\sigma ( \dt )=\sigma (0) \Im (\frac{die^{v\dt}-c}{cie^{v\dt}+d})=\sigma (0) \frac{e^{v\dt}(d^{2}+c^{2})}{c^{2}e^{2v\dt}+d^{2}} =:\sigma (0 ) f(\dt), \end{equation}
and it is easy to see that $f$ only depends on $\dt $ (and on $q_{0}$). In other words, $f (\dt )$ will be the same at each step of the algorithm. The existence of $\dt_{\mathrm{cr}}$ easily follows\footnote{Also recall Figure \ref{FigurePKR} in Section \ref{PrelimHspace}.}, and $\dt_{\mathrm{cr}}$ is the positive solution of $f (x)=1$.  

After a quick computation, we find:

\begin{equation} \exp (v\dt_{\mathrm{cr}} )=
\frac{\sqrt{1+u^{2}}+1}{\sqrt{1+u^{2}}-1}. \end{equation}
where 
\begin{equation} u:=\sqrt{\frac{\eta_{\mu}}{2n\eta_{\sigma}}}\frac{\beta}{\alpha} , \end{equation}
and
\begin{equation} v:=\sqrt{\eta_{\sigma}^{2}\alpha^{2}+
\frac{\eta_{\mu}\eta_{\sigma}}{2n}\beta^{2}}. \end{equation}
\label{deltacrit}

Finally, for $w=k.\mathbf{1}_{q\leq q_{0}}$, Proposition \ref{obvrem} shows that
\begin{equation}\dt_{\mathrm{cr}}    =\frac{1}{k}   \frac{1}{v}\ln \left( \frac{\sqrt{1+u^{2}}+1}{\sqrt{1+u^{2}}-1} \right). \end{equation}

\end{proof}

\subsection*{Generalization of the twisted Fisher metric}
\addcontentsline{toc}{subsection}{Generalization of the twisted Fisher metric}

The following definition is a more general way to introduce the twisted Fisher metric.

\begin{defi}
\label{TwistGen}
Let $(\Theta,g) $ be a Riemannian manifold, $(\Theta_{1},g\vert_{\Theta_1}),...,(\Theta_{n},g\vert_{\Theta_n})$ a splitting (as defined in Section \ref{ssBGIGO}) of $\Theta$ compatible with the metric $g$.

We call \emph{$(\eta_1,...,\eta_n)$-twisted metric on $(\Theta , g)$ for the splitting $\Theta_1,..., \Theta_n$} the metric $g'$ on $\Theta$ defined by $g'\vert_{\Theta_i}=\frac{1}{\eta_i}g\vert_{\Theta_i}$ for $1\leq i \leq n$.
\end{defi}

\begin{prop}
The $(\eta_\mu , \eta_\Sigma )$-twisted metric on $\Gd$ with the Fisher metric for the splitting $\mathcal{N}(\mu , \Sigma) \mapsto (\mu, \Sigma )$ coincides with the $(\eta_{\mu},\eta_\Sigma)$-twisted Fisher metric from Definition \ref{TwistMet}.
\end{prop}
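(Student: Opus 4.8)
The plan is to prove the equality of the two metrics by expressing both in an arbitrary parametrization $\theta = (\mu, \Sigma)$ and matching them term by term. The decisive observation, which I would establish first, is that the splitting $\mathcal{N}(\mu,\Sigma) \mapsto (\mu,\Sigma)$ is \emph{compatible} with the Fisher metric in the sense of Definition \ref{TwistGen}, i.e. that the mean block and the covariance block are orthogonal. This is what makes Definition \ref{TwistGen} applicable and is the only step that is not pure bookkeeping.

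To see the orthogonality, I would read off from Proposition \ref{FishG} that the Fisher metric $g$ on $\Gd$ is a sum $g = g_\mu + g_\Sigma$ with no cross terms, where
$$g_\mu = \dep{\trsp{\mu}}{\theta_i}\Sigma^{-1}\dep{\mu}{\theta_j}, \qquad g_\Sigma = \frac{1}{2}\tr\left(\Sigma^{-1}\dep{\Sigma}{\theta_i}\Sigma^{-1}\dep{\Sigma}{\theta_j}\right),$$
the first term involving only the derivatives of $\mu$ and the second only the derivatives of $\Sigma$. Picking coordinates adapted to the splitting (mean coordinates along which $\Sigma$ is constant, covariance coordinates along which $\mu$ is constant), a vector $u$ tangent to the mean block $\Theta_1$ has $\dep{\Sigma}{\theta_i}=0$ and a vector $v$ tangent to the covariance block $\Theta_2$ has $\dep{\mu}{\theta_j}=0$; both terms of $g(u,v)$ then vanish, so $g(u,v)=0$. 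Hence the splitting is compatible, and at the same time this identifies the restrictions as $g\vert_{\Theta_1} = g_\mu$ and $g\vert_{\Theta_2} = g_\Sigma$.

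Finally I would apply Definition \ref{TwistGen} with $(\eta_1,\eta_2) = (\eta_\mu, \eta_\Sigma)$: since a tangent vector decomposes orthogonally into its $\Theta_1$- and $\Theta_2$-components, the twisted metric is obtained by rescaling each block and summing, again without cross terms, giving
$$g' = \frac{1}{\eta_\mu}\,g\vert_{\Theta_1} + \frac{1}{\eta_\Sigma}\,g\vert_{\Theta_2} = \frac{1}{\eta_\mu}\,g_\mu + \frac{1}{\eta_\Sigma}\,g_\Sigma,$$
which in coordinates is exactly the expression $I_{i,j}(\eta_\mu,\eta_\Sigma)(\theta)$ of Definition \ref{TwistMet}. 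The main (and essentially only) obstacle is thus the compatibility of the splitting; once the Fisher metric is seen to be block-diagonal in $(\mu,\Sigma)$, the two notions of twisting are literally the same block-diagonal rescaling, and the conclusion is immediate.
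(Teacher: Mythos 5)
Your proposal is correct and follows essentially the same route as the paper, whose one-line proof just asserts that the twisted Fisher metric satisfies the defining condition of Definition \ref{TwistGen}; you merely fill in the details, namely that Proposition \ref{FishG} makes the Fisher metric block-diagonal in $(\mu,\Sigma)$ (so the splitting is compatible, as the paper also notes in the proof of Proposition \ref{xNESBGIGO}) and that block-rescaling then reproduces Definition \ref{TwistMet} exactly. No gap.
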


\begin{proof}
It is easy to see that the $(\eta_{\mu},\eta_\Sigma)$-twisted Fisher metric satisfies the condition in Definition \ref{TwistGen}.
\end{proof}

\subsection*{Pseudocodes}
\addcontentsline{toc}{subsection}{Pseudocodes}

\subsubsection*{For all algorithms}

All studied algorithms have a common part, given here:

Variables: $\mu, \Sigma$ (or $A$ such that $\Sigma=A\trsp{A}$).

List of parameters: $f \colon \Rd \to \R$, step size $\dt$, learning rates $\eta_{\mu}, \eta_{\Sigma}$, sample size $\lambda$, weights $(w_{i})_{i\in [1,\lambda]}$, $N$ number of steps for the Euler method, $r$ Euler step size reduction factor (for GIGO-$\Sigma$ only).

\begin{algorithm}
\caption{For all algorithms}

\begin{algorithmic}
\STATE $\mu \leftarrow \mu_{0}$
\IF{The algorithm updates $\Sigma$ directly}

\STATE $\Sigma \leftarrow \Sigma_{0}$

\STATE Find some $A$ such that $\Sigma=A\trsp{A}$

\ELSE[The algorithm updates a square root $A$ of $\Sigma$]

\STATE $A\leftarrow A_{0}$

\STATE $\Sigma=A\trsp{A}$
\ENDIF
\WHILE{NOT (Termination criterion)}

\FOR{$i=1$ to $\lambda$} 

\STATE $z_{i} \sim \mathcal{N} (0,I )$

\STATE $x_{i}=Az_{i}+\mu$
\ENDFOR

\STATE Compute the IGO initial speed (included in the algorithms below)

\STATE Update the mean and the covariance (the updates are Algorithms \ref{algo1} to \ref{algoN}).

\ENDWHILE

\end{algorithmic}
\label{algobase}
\end{algorithm}

Notice that we always need a square root $A$ of $\Sigma$ to sample the $x_{i}$, but the decomposition $\Sigma=A\trsp{A}$ is not unique. Two different decompositions will give two algorithms such that one is a \textit{modification} of the other as a stochastic process: same law (the $x_{i}$ are abstractly sampled from $\mathcal{N} (\mu ,\Sigma)$), but different trajectories (for given $z_{i}$, different choices for the square root will give different $x_{i}$, see also footnote \ref{ftModif}, in Section \ref{Discussion}). For GIGO-$\Sigma$,\footnote{We recall that the other implementation directly maintains a square root of $\Sigma$.} since we have to invert the covariance matrix, we used the Cholesky decomposition ($A$ lower triangular). Usually, in CMA-ES, the square root of $\Sigma$ ($\Sigma=A\trsp{A}$, $A$ symmetric) is used.

%
%
%
%
%
\clearpage
\subsubsection*{Updates}

When describing the different updates,  $\mu$, $\Sigma$, $A$, the $x_{i}$ and the $z_{i}$ are those defined in Algorithm \ref{algobase}. 

For Algorithm \ref{algo1} (GIGO-$\Sigma$), when the covariance matrix after one step is not positive definite, we compute the update again, with a step size divided by $r$ for the Euler method.\footnote{We have no reason to recommend any particular value of $r$, the only constraint is $r>1$.}

\begin{algorithm}

\caption{GIGO Update, one step, updating the covariance matrix}
\begin{algorithmic}
\label{algo1}

\STATE $1$. Compute the IGO speed:

\STATE $\displaystyle{ v_{\mu}=A\sum_{i=1}^{\lambda} w_{i} z_{i}},$

\STATE $\displaystyle{ v_{\Sigma}
=A \sum_{i=1}^{\lambda} w_{i} \left( z_{i} \trsp{ z_{i}} - I \right) \trsp{A} }.\newline$

\STATE $2$. Compute the Noether invariants:\\

\STATE $\displaystyle{J_{\mu} \leftarrow \Sigma^{-1} v_{\mu}},$

\STATE $\displaystyle{J_{\Sigma} \leftarrow\Sigma^{-1} ( v_{\mu}\trt \mu +v_{\Sigma}  )}.\newline$

\STATE $3$. Solve numerically the equations of the geodesics:\\

\STATE Unhappy $\leftarrow$ true

\STATE $\displaystyle{\mu_{0} \leftarrow \mu}$

\STATE $\displaystyle{\Sigma_{0} \leftarrow  \Sigma }$

\STATE $k=0$

\WHILE{Unhappy}

\STATE $\displaystyle{\mu \leftarrow \mu_{0}}$

\STATE $\displaystyle{\Sigma \leftarrow  \Sigma_{0} }$

\STATE $h \leftarrow \dt/(N r^{k})$

\FOR{$i=1$ to $Nr^{k}$}

\STATE $\displaystyle{\mu \leftarrow \mu  + h \eta_{\mu} \Sigma J_{\mu}}$ 
 
\STATE $\displaystyle{\Sigma \leftarrow \Sigma + h \eta_{\Sigma} \Sigma (J_{\Sigma} -J_{\mu} \trsp{\mu})}$

\ENDFOR

\IF{$\Sigma $ positive definite} 

\STATE Unhappy $\leftarrow$ false 

\ENDIF 

\STATE $k \leftarrow k+1$

\ENDWHILE

\RETURN $\mu$, $\Sigma$
\end{algorithmic}
\end{algorithm}
\clearpage

\begin{algorithm}
\caption{GIGO Update, one step, updating a square root of the covariance matrix}
\begin{algorithmic}
\label{algo2}
\STATE $1$. Compute the IGO speed:

\STATE $\displaystyle{ v_{\mu}=A\sum_{i=1}^{\lambda} w_{i} z_{i}},$

\STATE $\displaystyle{ v_{\Sigma}
=A \sum_{i=1}^{\lambda} w_{i} \left( z_{i} \trsp{ z_{i}} - I \right) \trsp{A} }.\newline$

\STATE $2$. Compute the Noether invariants:\\

\STATE $\displaystyle{J_{\mu} \leftarrow \Sigma^{-1} v_{\mu}},$

\STATE $\displaystyle{J_{\Sigma} \leftarrow\Sigma^{-1} ( v_{\mu}\trt \mu +v_{\Sigma}  )}.\newline$

\STATE $3$. Solve numerically the equations of the geodesics:\\

\STATE  $h\leftarrow \dt/N$

\FOR{$i=1$ to $N$}

\STATE $\displaystyle{\mu \leftarrow \mu  + h \eta_{\mu} A\trsp{A} J_{\mu}}$ 
 
\STATE $\displaystyle{A \leftarrow A + \frac{h}{2} \eta_{\Sigma}  \trsp{(J_{\Sigma} -J_{\mu} \trsp{\mu})} A}$

\ENDFOR

\RETURN $\mu$, $A$
\end{algorithmic}
\end{algorithm}

\begin{algorithm}
\caption{Exact GIGO, one step. Not exactly our implementation, see the discussion after Corollary \ref{VraiesGeos}.}
\begin{algorithmic}
\label{algo2}
\STATE $1$. Compute the IGO speed:

\STATE $\displaystyle{ v_{\mu}=A\sum_{i=1}^{\lambda} w_{i} z_{i}},$

\STATE $\displaystyle{ v_{\Sigma}
=A \sum_{i=1}^{\lambda} w_{i} \left( z_{i} \trsp{ z_{i}} - I \right) \trsp{A} }.\newline$

\STATE $2$. Learning rates

\STATE  $\displaystyle{\lambda \leftarrow \sqrt{\frac{\eta_{\Sigma}}{\eta_\mu}} }$\\

\STATE $\displaystyle{\mu \leftarrow \lambda \mu }$\\

\STATE $\displaystyle{v_{\mu} \leftarrow \eta_\mu \lambda v_{\mu} }$\\

\STATE $\displaystyle{v_{\Sigma} \leftarrow \eta_\Sigma v_{\Sigma} }$\\

\STATE $3$. Intermediate computations.

\STATE $\displaystyle{G^{2} \leftarrow A^{-1}\left( v_{\Sigma} \trsp{(A^{-1})}A^{-1}v_\Sigma + 2 v_{\mu} \trsp{v_{\mu}}\right)\trsp{(A^{-1})}}$\\ 

\STATE $\displaystyle{C_{1} \leftarrow \mathrm{ch} (\frac{G}{2})}$\\

\STATE $\displaystyle{C_2 \leftarrow \mathrm{sh} (\frac{G}{2})G^{-1}}$\\

\STATE $\displaystyle{R \leftarrow \trsp{\left( (C_1 - A^{-1}v_{\Sigma}\trsp{(A^{-1})} C_2 )^{-1} \right)}}$\\

\STATE $4$. Actual update\\

\STATE $\displaystyle{\mu \leftarrow \mu + 2ARC_2 A^{-1} v_{\mu} }$ \\

\STATE $\displaystyle{A \leftarrow AR}$\\

\STATE $5$. Return to the ``real" $\mu$\\

\STATE $\displaystyle{\mu \leftarrow \frac{\mu}{\lambda}}$\\

\RETURN $\mu$, $A$
\end{algorithmic}
\end{algorithm}

\begin{algorithm}
\caption{xNES update, one step}
\begin{algorithmic}
\STATE $1$. Compute $G_{\mu}$ and $G_{M}$ (equivalent to the computation of the IGO speed):

\STATE $\displaystyle{ G_{\mu}=\sum_{i=1}^{\lambda} w_{i} z_{i} }$ 

\STATE $\displaystyle{ G_{M}=\sum_{i=1}^{\lambda} w_{i} \left( z_{i} \trsp{ z_{i}} - I \right) }\newline$

\STATE $2$. Actual update:

\STATE $\displaystyle{\mu \leftarrow \mu + \eta_{\mu}AG_{\mu} }$

\STATE $\displaystyle{A \leftarrow A + A\exp (\eta_{\Sigma}G_{M}/2) }\newline$

\RETURN $\mu$, $A$
\end{algorithmic}
\end{algorithm}
\clearpage
\begin{algorithm}

\caption{pure rank-$\mu$ CMA-ES update, one step}
\begin{algorithmic}
\label{algoN}

\STATE $1$. Computation of the IGO speed:

\STATE $\displaystyle{v_{\mu}=\sum_{i=1}^{\lambda} w_{i} (x_{i} - \mu )}$

\STATE $\displaystyle{v_{\Sigma}=\sum_{i=1}^{\lambda} w_{i} \left( (x_{i} - \mu )\trsp{ (x_{i} - \mu )} - \Sigma \right)}$

\STATE $2$. Actual update:

\STATE $\displaystyle{\mu \leftarrow \mu + \eta_{\mu} v_{\mu} }$

\STATE $\displaystyle{\Sigma \leftarrow \Sigma + \eta_{\Sigma} v_{\Sigma}}$

\RETURN $\mu$, $\Sigma$
\end{algorithmic}
\end{algorithm}
\clearpage

\begin{algorithm}[h]
\caption{GIGO in $\tGd$, one step}
\begin{algorithmic}
\STATE $1$. Compute the IGO speed:

\STATE $\displaystyle{Y_{\mu}=\sum_{i=1}^{\lambda} w_{i} (x_{i} - \mu )}$

\STATE $\displaystyle{Y_{\sigma}=\sum_{i=1}^{\lambda} w_{i} \left( \frac{\trsp{(x_{i}-\mu )}(x_{i}-\mu )}{2d\sigma} - \frac{\sigma}{2}    \right)}$

\STATE $2$. Better parametrization:

\STATE $\displaystyle{\lambda:=\sqrt{\frac{2d\eta_{\mu}}{ \eta_{\sigma}}}}$
\STATE $\displaystyle{v_{r}:=\frac{\eta_{\mu}}{\lambda} \Vert Y_{\mu} \Vert}$

\STATE $\displaystyle{v_{\sigma}:=\eta_{\sigma} Y_{\sigma}}$

\STATE $3$. Find $a,b,c,d,v$ corresponding to $\mu, \sigma, \dot{\mu},\dot{\sigma}$:
\STATE $\displaystyle{v=\sqrt{\frac{v_{r}^{2}+v_{\sigma}^{2}}{\sigma^{2}}}}$
\STATE $\displaystyle{ S_{0}:= \frac{v_{\sigma} }{v \sigma^{2}}}$
\STATE $\displaystyle{ M_{0}:= \frac{v_{r} }{v \sigma^{2}}}$

\STATE $\displaystyle{C:=\frac{ \sqrt{S_{0}^{2}+M_{0}^{2}} - S_{0}}{2}}$

\STATE $\displaystyle{D:=\frac{ \sqrt{S_{0}^{2}+M_{0}^{2}} + S_{0}}{2}}$
\STATE $\displaystyle{c:=\sqrt{C}}$

\STATE $\displaystyle{d:=  \sqrt{D}}$

\STATE $4$. Actual Update:

\STATE $\displaystyle{z:=\sigma \frac{die^{v \dt}-c}{cie^{v\dt}+d}}$

\STATE $\displaystyle{\mu := \mu +  \lambda \Re(z) \frac{Y_{\mu}}{\Vert Y_{\mu} \Vert}}$

\STATE $\displaystyle{\sigma := \Im(z)}$

\RETURN $\mu$, $\sigma$
\end{algorithmic}
\label{AlgoTGD}
\end{algorithm}
%

\nocite{IGO}
\nocite{DC}
\nocite{Ama}
\nocite{AM}
\nocite{Whi}
\nocite{HMK}
\nocite{SolGeo}
\nocite{Eriksen}
\nocite{GeoRemarks}
\newpage
\bibliographystyle{plain}
\bibliography{BibliGIGO}

\end{document}